\patchcmd{\thebibliography}{\section*{\refname}}{}{}{}
\newcommand{\Z}{\mathbb{Z}}
\newcommand{\cN}{\mathcal{N}}
\newcommand{\ord}{\text{\rm ord}}
\newcommand{\ub}{\text{\tt ub}}
\newcommand{\bdot}{\circ}
\newcommand{\arb}{\diamond}
\newcommand\blfootnote[1]{%
  \begingroup
  \renewcommand\thefootnote{}\footnote{#1}%
  \addtocounter{footnote}{-1}%
  \endgroup
}
\theoremstyle{plain}
\newtheorem{thm}{Theorem}
\newtheorem{lem}{Lemma}[section]
\newtheorem{cor}[lem]{Corollary}
\newtheorem{prop}[lem]{Proposition}
\theoremstyle{definition}
\newtheorem{defi}[lem]{Definition}
\newtheorem{ques}[lem]{Question}
 \newtheorem*{prob*}{Problem}
\theoremstyle{remark}
\begin{document}

\title{Maximal temporal period of a periodic solution 
generated by a one-dimensional 
cellular automaton}
 
\author{{\sc Janko Gravner} and {\sc Xiaochen Liu}\\
Department of Mathematics\\University of California\\Davis, CA 95616\\{\tt gravner{@}math.ucdavis.edu, xchliu{@}math.ucdavis.edu}}
\maketitle
\bibliographystyle{plain}

\begin{abstract}
We study one-dimensional cellular automata evolutions with both temporal and spatial periodicity.
The main objective is to investigate the longest temporal periods among all two-neighbor rules, with a fixed spatial period $\sigma$ and number of states $n$.
When $\sigma = 2, 3, 4$, or $6$, and we restrict the rules to be additive, the longest period can be expressed as the exponent of the multiplicative group of an appropriate ring. 
We also construct non-additive rules with  temporal period on the same order as the trivial upper bound $n^\sigma$.
Experimental results, open problems, and possible extensions of our results are also discussed.
\end{abstract}

\blfootnote{\emph{Keywords}: Cellular automaton, exponent of a multiplicative group, periodic 
solution.}
\blfootnote{AMS MSC 2010:  37B15, 68Q80.}

\section{Introduction}

We continue our study of periodic solutions of one-dimensional  $n$-state cellular automata (CA) from \cite{gl1} and \cite{gl2}.
In those two 
papers, we
assumed a fixed spatial period $\sigma$ and discussed the temporal periods for randomly selected rules. 
In the present paper, we instead investigate the analogous extremal questions.

We refer to elements of $\Z$ as \textbf{sites}, and, for a fixed 
$n\ge 2$,  to elements 
of $\mathbb{Z}_n = \{0, 1, \dots n - 1\}$ as \textbf{states} or \textbf{colors}. 
A (one-dimensional) \textbf{spatial configuration} is a coloring of sites, that is, a 
map $\xi:\Z\to \mathbb{Z}_n$.
A one-dimensional CA is a spatially and temporally discrete dynamical system of evolving spatial configurations $\xi_t$, 
$t \in \mathbb{Z}_+ = \{0, 1, \dots\}$.
In general, the dynamics of such CA is determined by a \textbf{neighborhood} $\cN\subset \Z$ of $r\ge 1$ sites and by its \textbf{(local) rule}, 
which is
a function $f: \mathbb{Z}_n^r \to \mathbb{Z}_n$.
In this paper, as in \cite{gl1}, we assume the simplest non-trivial case when $r = 2$ and $\cN=\{-1,0\}$. Thus, 
the spatial configuration updates from $\xi_t$ to $\xi_{t + 1}$ 
using a rule $f: \mathbb{Z}_n^2 \to \mathbb{Z}_n$ as follows:
$$\xi_{t + 1} (x) = f(\xi_t(x - 1), \xi_t(x)),$$
for all $x \in \mathbb{Z}$. 
We sometimes write $c_0\underline{c_1} \mapsto c_2$ instead of $f(c_0, c_1) = c_2$.
A rule $f$ is  \textbf{additive} if it commutes with sitewise 
addition modulo $n$ or, equivalently, if there exist $a,b\in \Z_n$ so that $f(c_0, c_1) = bc_0 + ac_1 \mod n$, for all $c_0, c_1 \in \mathbb{Z}_n$.
Once $\xi_0$ is specified, the rule 
determines the CA \textbf{trajectory} $\xi_0, \xi_1,\ldots$, 
which we also identify with its space-time assignment $\mathbb{Z} \times \mathbb{Z}_+ \to \mathbb{Z}_n$, given by
$(x, t) \mapsto\xi_{t}(x)$. 

We focus on CA whose trajectories are periodic in both directions. 
We call a spatial configuration $\xi$ \textbf{periodic} if $\xi(x) = \xi(x + \sigma)$, for all $x \in \mathbb{Z}$ and a $\sigma > 0$.
If $\sigma$ is the smallest such number, we call $\sigma$ the \textbf{spatial period} of $\xi$.
It is clear that, if $\xi_t$ is periodic with period $\sigma$, then $\xi_{t + 1}$ is also periodic with a period that 
divides $\sigma$. Observe also that, if $\xi_0$ is periodic 
with period $\sigma$, we may view the evolution of the CA as 
the sequence of colorings of $\{0,1,\ldots, \sigma-1\}$, 
with periodic boundary, as in \cite{martin1984algebraic}. 
If, for some $\ell$,  $\xi_\ell$ is periodic with period $\sigma$, and $\tau\ge 1$ is the smallest integer such that $\xi_{\ell +\tau} = \xi_{\ell}$, then we call $\xi_\ell$ a \textbf{periodic solution} (\textbf{PS}) of rule $f$, with \textbf{temporal period} $\tau$ and spatial period $\sigma$. We can specify a particular PS by 
any $\sigma$ contiguous states $\xi_j(x)\xi_j(x + 1) \dots \xi_j(x + \sigma - 1)$, for any $x \in \mathbb{Z}$ and $\ell\le j< \ell+\tau$.
See Figure \ref{figure: PS example} for an example.
In this figure, $n = 3$ and $f$ is the additive rule given by 
$f(c_0, c_1) = c_0 + c_1$ for all $c_0, c_1 \in \mathbb{Z}_3$.
This PS has spatial period $\sigma = 4$ and 
temporal period $\tau = 8$, and can be specified by 
any $\sigma = 4$ contiguous states, say $2101$. The 
temporal period $\tau=8$ is the 
largest of all additive rules with $\sigma=4$ and $n=3$.
 
\begin{figure}[h] 
    \centering
    \includegraphics[scale = 0.25, trim = 5cm 1cm 3cm 1cm, clip]{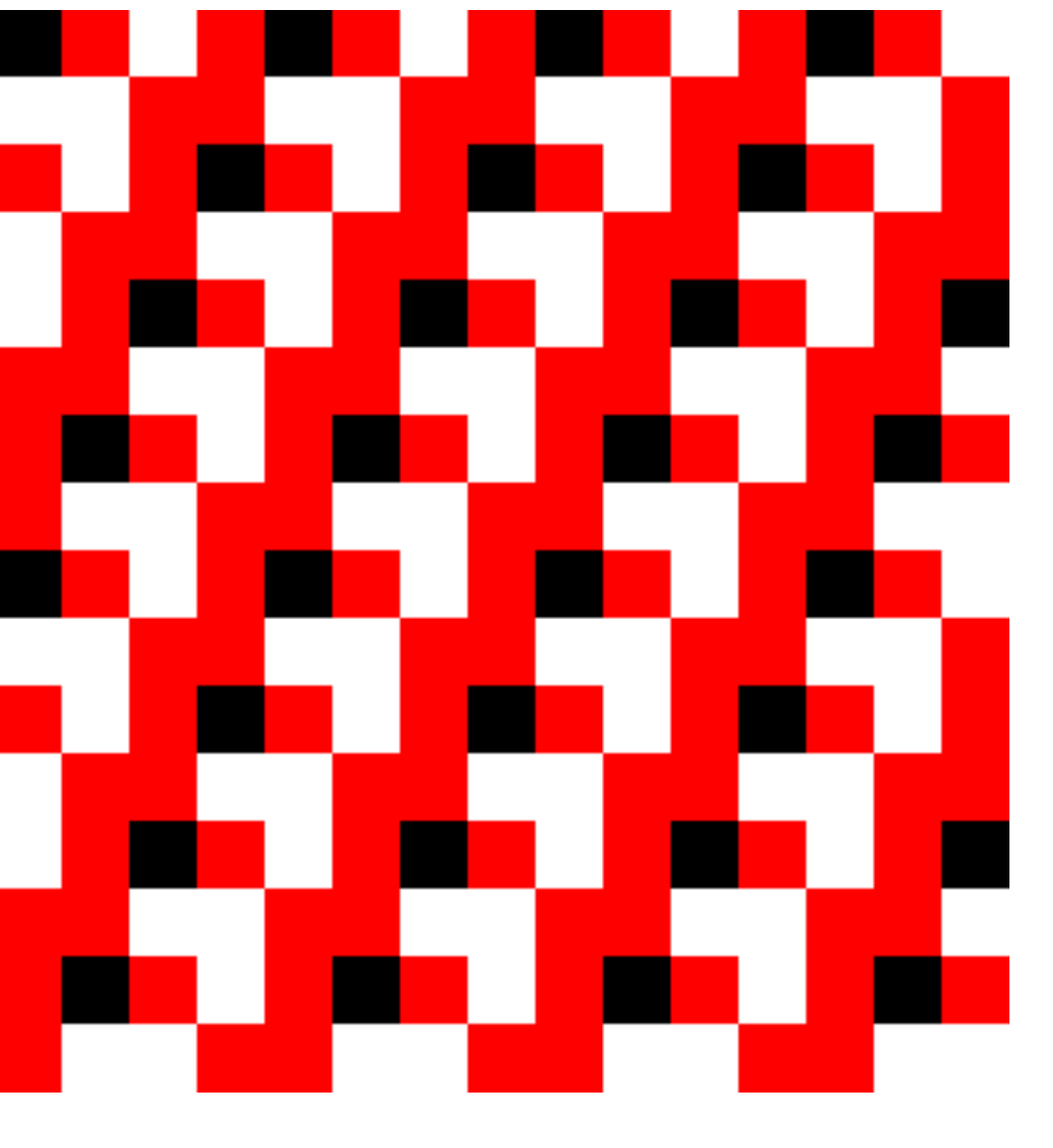}\\
    \caption{
    A $16\times 16=4\sigma\times 2\tau$ piece of trajectory of a PS of a 3-state additive rule.
    States 0, 1, and 2 are represented by white, red and black cells, respectively. The time axis is oriented downward, as is customary.
}
    \label{figure: PS example}
  \end{figure}

For an $n$-state rule $f$ and $\sigma\ge 1$, we 
let $X_{\sigma, n}(f)$ and $Y_{\sigma, n}(f)$ be, respectively, the largest and smallest temporal periods of PS, with spatial period $\sigma$, of the rule $f$. When $f$ is selected uniformly at random, $X_{\sigma, n}$ and $Y_{\sigma, n}$ become random variables, which we investigated in \cite{gl1, gl2}. 
In \cite{gl1}, we proved that the smallest temporal period $Y_{\sigma, n}$ converges in distribution to a non-trivial limit, as $n \to \infty$; in particular, it is stochastically bounded. 
By contrast, the longest temporal period $X_{\sigma, n}$ 
is expected to be on the order $n^{\sigma/2}$. 
We prove this in \cite{gl2} in the more general $r$-neighbor setting, 
but for our methods to work we are forced to assume 
that $\sigma \le r$. Then, $X_{\sigma, n}/n^{\sigma/2}$ converges in distribution to a non-trivial limit, 
as $n\to\infty$. The case 
$\sigma>r$ is still open, even in our present case $r=2$. 

Instead of their typical size, this paper explores the extremal values of quantities $X_{\sigma, n}(f)$ and $Y_{\sigma, n}(f)$. 
It is clear that $\min_f Y_{\sigma, n}(f)  = \min_fX_{\sigma, n}(f) = 1$, as the minima are 
attained by the identity $n$-state rule, 
i.e., the rule $f$ given by $f(c_0, c_1) = c_1$, for all $c_0, c_1 \in \mathbb{Z}_n$. 
We therefore focus on 
\begin{equation}\label{extq}
\max_fY_{\sigma, n}(f)\text{ and }\max_fX_{\sigma, n}(f),
\end{equation}
the largest among the shortest and longest temporal periods of a PS with spatial period $\sigma$ and $n$ states.
Let
$T(\sigma, n)$ be the number of aperiodic 
length-$\sigma$ words from alphabet $\mathbb{Z}_n$, 
that is, words that cannot be written as repetition of a subword. Then it is clear that, for all $n$ state rules $f$,
$1 \le Y_{\sigma, n}(f)  \le X_{\sigma, n}(f) \le T(\sigma, n)$. 
We also have the following counting result.

\begin{lem}\label{lemma-T}
The number of aperiodic length-$\sigma$ word from alphabet $\mathbb{Z}_n$ is 
\begin{align*}
T(\sigma, n) & = 
\sum_{d\bigm| \sigma} n^d  \mu\left(\frac{\sigma}{d}\right)\\
& = 
\begin{cases}
n^\sigma - n^{\sigma/2} + o(n^{\sigma/2}), & \text{if } \sigma \text{ is even}\\
n^\sigma + o(n^{\sigma/2}), & \text{if } \sigma \text{ is odd}
\end{cases},
\end{align*}
where $\mu(\cdot)$ is the M\"{o}bius function.
\end{lem}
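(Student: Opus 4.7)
The plan is to use Möbius inversion on the divisor lattice of $\sigma$. First, I would observe that every length-$\sigma$ word $w$ over $\mathbb{Z}_n$ has a unique smallest (spatial) period $d$, and that this $d$ must divide $\sigma$: indeed, the set of periods of $w$ is closed under positive $\gcd$, so the minimal one divides every period, and $\sigma$ is a period of $w$ by assumption. The word $w$ is then the $\sigma/d$-fold concatenation of a unique aperiodic word of length $d$, giving a bijection between length-$\sigma$ words with smallest period $d$ and aperiodic length-$d$ words. Partitioning the $n^\sigma$ length-$\sigma$ words according to their smallest period yields the identity
\[
n^\sigma \;=\; \sum_{d \mid \sigma} T(d,n).
\]

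Since this identity holds for every $\sigma \ge 1$, classical Möbius inversion over the divisor poset gives
\[
T(\sigma,n) \;=\; \sum_{d \mid \sigma} \mu(\sigma/d)\, n^d,
\]
which is the first claimed equality.

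For the asymptotics, I would isolate the $d=\sigma$ term, which contributes $\mu(1)\,n^\sigma=n^\sigma$. Among the remaining divisors $d<\sigma$, the largest is $\sigma/p^{*}$, where $p^{*}$ is the smallest prime factor of $\sigma$. When $\sigma$ is even, $p^{*}=2$, so the divisor $d=\sigma/2$ appears with coefficient $\mu(2)=-1$, contributing $-n^{\sigma/2}$; every other proper divisor $d$ satisfies $\sigma/d\ge 3$, hence $d\le \sigma/3$, and the bounded number of such terms contributes a total of $O(n^{\sigma/3})=o(n^{\sigma/2})$. When $\sigma$ is odd, every proper divisor $d$ satisfies $\sigma/d\ge 3$ (since $\sigma/d$ is an odd integer greater than $1$), so $d\le \sigma/3$ and the entire correction is $O(n^{\sigma/3})=o(n^{\sigma/2})$.

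There is no real obstacle here: the only point requiring a little care is the bookkeeping of the second-order term when $\sigma$ has multiple prime factors, which is handled uniformly by the bound $d\le \sigma/3$ for every proper divisor other than (possibly) $\sigma/2$.
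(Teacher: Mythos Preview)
Your argument is correct and is the standard derivation: partition by minimal period, apply M\"obius inversion, then read off the two leading terms. The paper itself does not give a proof but simply cites \cite{cattell2000fast}, so there is nothing to compare against beyond noting that your approach is exactly the classical one that reference would contain.
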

\begin{proof}
See \cite{cattell2000fast}.
\end{proof}
	
For $\sigma = 1$ and any $n$, it is easy to find a rule $f$ with $Y_{1, n}(f) = X_{1, n}(f) = n = T(1, n)$; for example, any rule $f$ satisfying $f(a, a) = \phi(a)$, where $\phi$ is any permutation on $\mathbb{Z}_n$ of order $n$, would do.  
For $\sigma = 2$, viewing evolution on $\{0,1\}$ with periodic boundary, a unique CA with temporal period $\binom {n}{2}$ goes through all length-$2$ configuration $ab$, with $a < b \in \mathbb{Z}_n$. 
For instance, when $n = 3$, the evolution
$$
\begin{matrix}
0 & 1\\
0 & 2\\
1 & 2
\end{matrix}
$$
defines a rule with $0\underline{1} \mapsto 2$,
$1\underline{0} \mapsto 0$,
$0\underline{2} \mapsto 2$,
$2\underline{0} \mapsto 1$,
$1\underline{2} \mapsto 1$
and
$2\underline{1} \mapsto 0$.
Switching the last two values of $f$ extends the PS to
$$
\begin{matrix}
0 & 1\\
0 & 2\\
1 & 2\\
1 & 0\\
2 & 0\\
2 & 1
\end{matrix}
\quad ,$$ 
which has temporal period $6 = 3^2 - 3 = T(2, 3)$.
It is clear that this construction works for all $n$ and gives   
$Y_{2, n}(f) = X_{2, n}(f) = n^2 - n = T(2, n)$. 

Even for $\sigma \ge 3$, 
it is not obvious what 
the extremal values (\ref{extq})
are, whether they are equal, or whether 
the upper bound $T(3, n)$ 
can always be attained. One of our main results is that 
$\max_f Y_{\sigma, n}(f)= \Theta(n^\sigma)$, matching the
order of $T(\sigma, n)$ given by Lemma~\ref{lemma-T}.

\begin{thm} \label{theorem: main of general rule}
Fix an arbitrary $\sigma > 0$.
For $n\ge N(\sigma)$, there exists an $n$-state CA rule $f$ such that 
$X_{\sigma, n}(f) = Y_{\sigma, n}(f) \ge C(\sigma)n^\sigma$, where $N(\sigma)$ and $C(\sigma)$ are constants depending only on $\sigma$. 
\end{thm}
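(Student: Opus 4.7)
The plan is to construct $f$ so that its induced map $F$ on $\sigma$-periodic configurations consists, on the subset of spatial period exactly $\sigma$, of cycles of common length $\Theta(n^\sigma)$. Since \cite{gl2} shows that typical rules attain cycle lengths only of order $n^{\sigma/2}$, a random construction cannot suffice; instead we engineer $f$ to behave like a base-$m$ odometer on $\sigma$ digits, using a ``phase'' coordinate to break the CA's translation symmetry.

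Concretely, I would write $n = m\sigma + r$ with $0 \le r < \sigma$ and encode the first $m\sigma$ states as pairs $(v, \phi) \in \Z_m \times \Z_\sigma$. Call a configuration \emph{good} if the $\sigma$-tuple of its $\phi$-components is a cyclic shift of $(0, 1, \ldots, \sigma-1)$. Define the $\phi$-component of the rule by
$$
f_\phi(a, b) = \begin{cases} b + 1 \pmod \sigma & \text{if } b - a \equiv 1 \pmod \sigma, \\ 0 & \text{otherwise.} \end{cases}
$$
The first clause preserves good configurations and shifts the phase pattern by $+1$ at every time step; the second collapses any misaligned phase pair, a situation that never arises inside the good set. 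The good configurations number $\sigma m^\sigma = \Theta(n^\sigma)$ for fixed $\sigma$.

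Within good configurations, the $v$-component at a site of phase $\phi$ updates via one of $\sigma$ sub-rules $g_0, \ldots, g_{\sigma-1} \colon \Z_m^2 \to \Z_m$ built into $f$. Over $\sigma$ consecutive time steps each site applies every sub-rule in a fixed cyclic order, producing a composed map $G \colon \Z_m^\sigma \to \Z_m^\sigma$ of reach $\sigma$ on the ring of size $\sigma$. I would choose the $g_\phi$ so that $G$ implements a base-$m$ odometer: one sub-rule increments the ``active digit'' corresponding to phase $0$, and the others propagate carries to the right one position per step. If the odometer realizes a single $m^\sigma$-cycle, the cycle on good configurations has length $\sigma \cdot m^\sigma = \Theta(n^\sigma)$, giving $C(\sigma) \sim \sigma^{1-\sigma}$.

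To secure $X_{\sigma,n}(f) = Y_{\sigma,n}(f)$, one must show no PS of spatial period exactly $\sigma$ lies outside the good set. A configuration outside the good set contains a misaligned phase pair, so after one step its phase word picks up a $0$ where it had none; tracking how these $0$'s propagate, one verifies that after at most $O(\sigma)$ steps the phase word either becomes a cyclic shift of $(0, \ldots, \sigma-1)$ (and the configuration is now good) or becomes constant (so the spatial period has dropped below $\sigma$). In neither case does this contribute a PS of spatial period exactly $\sigma$ with a different temporal period. The main technical obstacle I foresee is designing $g_0, \ldots, g_{\sigma-1}$ so that the composition $G$ is genuinely a single $m^\sigma$-cycle --- a combinatorial construction of odometer-style carry propagation that respects the CA locality constraint at each of the $\sigma$ sub-steps.
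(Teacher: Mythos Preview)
Your phase-coordinate idea is sound and is essentially the backbone of the paper's shorter construction (the \emph{prime partition rule}). But as written the proposal has two genuine gaps.

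\textbf{The phase dynamics does not eliminate bad configurations.} Take $\sigma = 3$ and the phase word $(0,1,0)$, which has spatial period $3$ and is not good. Under your rule $f_\phi$ one computes
\[
(0,1,0)\to(0,2,0)\to(0,0,1)\to(0,0,2)\to(1,0,0)\to(2,0,0)\to(0,1,0),
\]
a $6$-cycle lying entirely in the bad set, with spatial period $3$ throughout. Whatever you do with the $v$-coordinate on these phases, this produces a PS of spatial period $3$ whose temporal period has no reason to match that of the good orbit, so the equality $Y_{\sigma,n}(f)=X_{\sigma,n}(f)$ fails. The problem is that your ``else $\to 0$'' clause sends a misaligned pair to the legitimate phase value $0$, which can then re-align with a neighboring $\sigma-1$ and keep cycling. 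The fix is to route misalignment to a dedicated absorbing terminator state (one of the $r$ leftover states, reserved for this purpose), not to phase $0$.

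\textbf{The odometer composition is the real work and is left undone.} You need $\sigma$ local sub-rules whose $\sigma$-step composition on the ring of size $\sigma$ is a single $m^\sigma$-cycle; this is a non-obvious combinatorial design, and the paper does not attempt it in this form either. The paper's first construction implements a genuine carry-propagating odometer, but at the cost of several auxiliary coordinates (a particle $\leftarrow$, a carry marker $\ast$, an end marker $E$) plus two embedded finite automata whose sole job is to detect and terminate illegitimate configurations. Its second, much shorter construction keeps your phase idea but abandons the odometer altogether: it chooses $\sigma$ distinct primes $p_0,\ldots,p_{\sigma-1}\in[\tfrac{n-1}{2\sigma},\tfrac{n-1}{\sigma}]$, gives phase $j$ its own independent $p_j$-cycle on the value coordinate, and sends any misaligned pair to a terminator $0$. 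Coprimality then forces good configurations into a single PS of period $\prod_j p_j\ge\bigl(\tfrac{n-1}{2\sigma}\bigr)^\sigma$, with no composed-odometer design required. If you replace your ``else $\to 0$'' by ``else $\to$ terminator'' and your odometer by independent prime-length cycles, your construction becomes exactly this one.
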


To alleviate the difficulties in computing the extremal quantities (\ref{extq}), one may try to restrict the set of rules $f$. The most natural such 
restriction are 
the additive rules, which exploit the algebraic structure of 
the states and enable the use of algebraic tools \cite{martin1984algebraic, jen1, jen2} .
We denote by $\mathcal{A}_n$ the set of $n$-state additive rules and let 
$$\pi_\sigma(n) = \max_{f \in \mathcal{A}_n} X_{\sigma, n}(f).$$
It follows from \cite{martin1984algebraic} that $\pi_\sigma(n) \le n^{\sigma - 1}$ (see Corollary \ref{corollary: main}), and therefore by Theorem~\ref{theorem: main of general rule} the maximal period of additive rules 
is at least by 
one power of $n$ smaller than that of non-additive 
rules.
Furthermore, for $\pi_\sigma(n)$ and $\sigma \in \{2, 3, 4, 6\}$, we 
are able to give an explicit formula for $\pi_\sigma(n)$. 
Let $\lambda_\sigma(n)$ be the exponents of multiplicative group of $\Z_n$ when $\sigma=2$, Eisenstein integers modulo $n$ when $\sigma=3$, and Gaussian integers modulo $n$ when $\sigma =4$. Then $\pi_\sigma$ is related to $\lambda_\sigma$ as follows. 

\begin{thm} \label{theorem: main of additive rule}
For $\sigma = 2, 3$, $\pi_\sigma(n) = \lambda_\sigma(\sigma n)$, for all $n \ge  2$. Moreover, $\pi_4(2)=4$ and 
$\pi_4(n) = \lambda_4(n)$, for all $n \ge 3$. Finally,  
$\pi_6(n) = \lambda_3(6n)$, for all $n \ge 2$.
\end{thm}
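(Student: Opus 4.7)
My plan is to translate the additive CA dynamics on spatial-period-$\sigma$ configurations into multiplication in a quotient polynomial ring, and then to analyze the maximum multiplicative order in that ring via the factorization $x^\sigma - 1 = \prod_{d\mid\sigma} \Phi_d(x)$. I identify a $\sigma$-periodic configuration $\xi$ with the element $\sum_{k=0}^{\sigma-1}\xi(k)x^k$ of the ring $R_\sigma := \mathbb{Z}_n[x]/(x^\sigma-1)$; under this identification the additive rule $f(c_0,c_1)=bc_0+ac_1$ acts as multiplication by $a+bx$. The temporal period of a PS $\xi$ of spatial period exactly $\sigma$ is then the smallest $\tau\ge 1$ with $(a+bx)^\tau\xi=\xi$, and the maximum over such $\xi$ is essentially the multiplicative order of $a+bx$ on the $\Phi_\sigma$-isotypic component of $R_\sigma$.

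The next step is to decompose $R_\sigma$ via the cyclotomic factorization. When $\gcd(n,\sigma)=1$, the Chinese remainder theorem gives $R_\sigma\cong\prod_{d\mid\sigma}\mathbb{Z}_n[\zeta_d]$, and the component responsible for genuine spatial period $\sigma$ is $\mathbb{Z}_n[\zeta_\sigma]$, whose unit group has exponent $\lambda_\sigma(n)$; this coincides with $\lambda_\sigma(\sigma n)$ in the coprime case. When a prime $p$ divides $\gcd(n,\sigma)$, however, some $\Phi_d(x)$ with $d\mid\sigma$ acquires a multiple root modulo $p$ (for instance $\Phi_3\equiv(x-1)^2\pmod 3$ and $\Phi_4\equiv(x+1)^2\pmod 2$), so the corresponding factor of $R_\sigma$ becomes a nilpotent extension of $\mathbb{Z}_n[\zeta_d]$. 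I plan to identify this nilpotent-enriched piece of $R_\sigma$ with a suitable quotient of $\mathbb{Z}[\zeta_\sigma]/(\sigma n)$ via an explicit surjective ring homomorphism, thereby reading off its exponent as $\lambda_\sigma(\sigma n)$.

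The proof proper matches upper and lower bounds. For the upper bound, I push any unit $a+bx\in R_\sigma^*$ forward through the decomposition and the surjection and check that each image has order dividing $\lambda_\sigma(\sigma n)$ (respectively $\lambda_4(n)$ for $\sigma=4$ and $n\ge 3$). For the lower bound, I pick a generator $u$ of maximal order in $(\mathbb{Z}[\zeta_\sigma]/\sigma n)^*$, realize it as $a+bx$ for a specific pair $(a,b)\in\mathbb{Z}_n^2$, and choose an initial $\xi$ of spatial period $\sigma$ that is a unit in $R_\sigma$, so that the orbit of $\xi$ under multiplication by $a+bx$ has length exactly $\lambda_\sigma(\sigma n)$.

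The principal technical obstacle is the second step: establishing the ring-theoretic correspondence when $\gcd(n,\sigma)>1$. The case $\sigma=6$ is the most delicate, since both primes $2$ and $3$ may simultaneously divide $n$ and each contributes a nilpotent enrichment; verifying that together they assemble to exactly the factor of $6$ inside $\lambda_3(6n)$---neither more nor less---requires a case analysis by the $p$-adic valuations of $n$. The exceptional value $\pi_4(2)=4$ is then handled by direct calculation, recording the one case in which the $\sigma=4$ nilpotent extension produces a unit of order $4$ not visible from $\lambda_4(2)=2$.
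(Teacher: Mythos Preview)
Your framework is close in spirit to the paper's: both identify $\sigma$-periodic configurations with elements of $R_\sigma=\Z_n[x]/(x^\sigma-1)$ and interpret the additive rule as multiplication by $a+bx$. The paper makes this decomposition explicit via circulant diagonalization (its Lemma~2.9), which is precisely your $R_\sigma\cong\prod_{d\mid\sigma}\Z_n[\zeta_d]$ written out in coordinates over $\Z[\zeta_\sigma]$.

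There is, however, a genuine gap in your non-coprime step. The ``explicit surjective ring homomorphism'' from the nilpotent-enriched piece of $R_\sigma$ onto a quotient of $\Z[\zeta_\sigma]/(\sigma n)$ does not exist in general. Take $\sigma=2$ and $n=2^m$: any ring map $\Z_{2^m}[x]/(x^2-1)\to\Z_{2^{m+1}}$ would have to send $1\mapsto 1$, forcing $2^m\mapsto 2^m\ne 0$, a contradiction. So you cannot read off the exponent of $(R_\sigma)^\times$ from $\lambda_\sigma(\sigma n)$ by a surjection; the equality $\pi_2(2^m)=\lambda_2(2^{m+1})$ is a numerical match established another way. (Note also that your mechanism would predict $\pi_4(n)=\lambda_4(4n)$, not $\lambda_4(n)$; the fact that $\sigma=4$ behaves differently is not explained by your framework.) The paper handles these primes $p\mid\sigma$ not by a structural isomorphism but by separate upper and lower arguments: the upper bound via the Martin--Odlyzko--Wolfram divisibility relations $\Pi_\sigma(a,b;p^m)\mid p^{m-1}\Pi_\sigma(a,b;p)$ together with small-case verification, and the lower bound by exhibiting a specific $(a,b)$ (e.g.\ for $\sigma=2$, $p=2$, one chooses $c\in\Z_{2^{m+1}}^\times$ of maximal order and sets $a=(c+1)/2$, $b=(c-1)/2$) and directly computing via the diagonalization formulas.

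A second gap is in the lower bound. Your plan ``pick a generator $u\in(\Z[\zeta_\sigma]/\sigma n)^\times$ and realize it as $a+bx$'' begs the question: $u$ lives modulo $\sigma n$, while $a,b$ are only defined modulo $n$, so the realization is not automatic. And choosing $\xi$ a unit only keeps the spatial period equal to $\sigma$ if $a+bx$ is itself a unit in all of $R_\sigma$; otherwise the orbit can leave the units. The paper deals with the spatial-period issue directly: for each $\sigma$ it writes down the linear system that would force a period drop (from the circulant formulas) and checks that the chosen $(a,b)$ make it inconsistent. In short, what you are missing is not the overall strategy but precisely the prime-by-prime work at $p\mid\sigma$ that the paper carries out explicitly.
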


This theorem, and Lemmas~\ref{lemma: lcm of exponent}--\ref{lemma: lambda_4}, give
the promised explicit expressions for the four $\pi_\sigma(n)$.
It is tempting to conjecture that a variant of Theorem~\ref{theorem: main of additive rule} holds for all $\sigma$, 
with a suitable definition of $\lambda_\sigma$ for Kummer
ring $\Z_n(\zeta)$, where $\zeta$ is the $\sigma$'th 
root of unity. However, this remains unclear as $\zeta$ is quadratic only for 
$\sigma=3,4,6$, and this fact plays a crucial role in 
our arguments. 


We now give a brief review of the previous literature on large
temporal periods of PS.
The fundational work on the temporal periods of additive CA is certainly \cite{martin1984algebraic}. Various 
recursive relations and upper bounds given in this paper are 
very useful, and indeed are utilized  
in the proof of Theorem~\ref{theorem: main of additive rule} in  Section \ref{section: additive}. Like the present paper, \cite{martin1984algebraic}, and its notable successors such as \cite{jen1, jen2}, study CA on finite intervals 
with periodic boundary. This choice is important, as
results with other type of boundaries yield substantially different results. The paper \cite{chang1997maximum}, for example, investigates the maximal length of temporal periods of binary CA under null boundary condition, and demonstrates that the maximal length $2^\sigma - 1$ can be obtained by additive rules, for any $\sigma > 0$.
In \cite{adak2018there}, the authors address the same question  for non-additive CA, and 
show that the maximal length can also be obtained, if the rule
is allowed to be non-uniform among sites.
Works that investigate additive rules and their temporal periods also include \cite{guan1986exact}, \cite{pries1986group}, 
\cite{thomas2006characteristic}, and \cite{misiurewicz2006iterations}.

The rest of the paper is organized as follows.
In Section \ref{section: additive} we address additive rules and prove Theorem \ref{theorem: main of additive rule}. We relegate 
a result on multiplicative group structure of Eisenstein numbers modulo $n$, which is needed for $\sigma=3,6$, to the Appendix
at the end of the paper. 
In Section \ref{section: general}, we prove Theorem \ref{theorem: main of general rule} through explicit construction.
Finally, in Section \ref{section: conclusion}, we present several simulation results and propose a number of resulting conjectures.

\section{Longest temporal periods of additive rules} \label{section: additive}
In this section, we investigate the longest temporal period that an additive rule is able to generate, for a fixed spatial period $\sigma$. 

\subsection{Definitions and preliminary results}
\label{section: additive-def}
We write a configuration $\xi_t$ on the integer interval $[0,\sigma-1]$ with periodic boundary as
$c_0^{(t)}c_1^{(t)}\dots c_{\sigma - 1}^{(t)}$, where $c_j^{(t)} \in \mathbb{Z}_n$, for $j = 0, 1, \ldots, \sigma - 1$, or, 
equivalenty, by the polynomial of degree $\sigma-1$\cite{martin1984algebraic}
$$
L^{(t)}(x) = \sum_{j = 0}^{\sigma - 1} c_j^{(t)}x^j.
$$
An additive rule $f$ such that $f(c_0, c_1) = bc_0 + ac_1$, for $a, b\in \mathbb{Z}_n$ is characterized by the polynomial $T(x) = a + bx$, and its evolution as polynomial multiplication:
$$
L^{(t + 1)}(x) = T(x)L^{(t)}(x),
$$
in the quotient ring of polynomials $\mathbb{Z}_n[x]$ modulo the ideal generated by the polynomial $x^\sigma  -1$, to  implement the periodic 
boundary condition. In this section, we will use $T(x)$, for some fixed $a$ and $b$, to specify an additive CA, in place of the rule $f$. 

As a result, a PS generated by the additive rule $T(x) = a + bx$ with temporal period $\tau$ and spatial period $\sigma$ satisfies
$$
T^{\tau}(x)L^{(\ell)}(x) = L^{(\ell)}(x), \text{ in }\mathbb{Z}_n[x]/(x^\sigma - 1).
$$ 
We are interested in the longest temporal period with a fixed spatial period $\sigma$.
For general CA, this task requires the examination of the longest cycle in the configuration directed graph \cite{gl2}, 
which encapsulates information from all initial configurations. 
For linear rules, however, the following simple proposition from \cite{martin1984algebraic} reduces the set of relevant
initial configurations to a singleton. 

\begin{prop} (Lemma 3.4 in \cite{martin1984algebraic}) \label{proposition: longest period starts with 1} Fix an additive CA and a $\sigma\ge 1$. 
The temporal period of any PS with the spatial period $\sigma$   divides the temporal period resulting from	the initial configuration $1\,0^{\sigma -1}$ ($1$ followed by $\sigma-1$ $0$s), represented by the constant polynomial $1$. 
\end{prop}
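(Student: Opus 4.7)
The plan is to exploit linearity. With $T(x) = a + bx$, the CA evolution in the finite ring $R = \mathbb{Z}_n[x]/(x^\sigma - 1)$ is just multiplication by $T(x)$: $L^{(t)}(x) = T(x)^t L^{(0)}(x)$. Starting from $L^{(0)}(x) = 1$, the trajectory is $1, T(x), T(x)^2, \ldots$, which is eventually periodic by finiteness of $R$; let $\tau_1$ be its temporal period, so that
$$T(x)^{\ell_1 + \tau_1} \;=\; T(x)^{\ell_1} \quad \text{in } R$$
for some $\ell_1 \geq 0$, with $\tau_1$ the minimal such positive integer.

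The main step is then almost immediate. For any initial configuration $L^{(0)}(x) \in R$, multiplying the identity above by $L^{(0)}(x)$ yields
$$T(x)^{\ell_1 + \tau_1} L^{(0)}(x) \;=\; T(x)^{\ell_1} L^{(0)}(x),$$
that is, $L^{(\ell_1 + \tau_1)}(x) = L^{(\ell_1)}(x)$. Hence the trajectory starting from $L^{(0)}$ enters a cycle by time $\ell_1$ whose length divides $\tau_1$. Since the temporal period $\tau$ of any PS generated from $L^{(0)}$ is by definition the minimal cycle length, we conclude $\tau \mid \tau_1$, which is what the proposition asserts.

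Since the argument reduces to a single application of the fact that CA evolution is multiplication in $R$, there is no substantial obstacle. The only mild subtlety worth noting is that $T(x)$ need not be a unit in $R$, so the trajectory from $1$ may have a nontrivial transient of length $\ell_1 > 0$ before entering its cycle; however, the paper's definition of \emph{temporal period} explicitly permits such a pre-period, and the identity used above is valid regardless of whether $\ell_1 = 0$.
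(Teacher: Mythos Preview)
Your argument is correct and is exactly the standard one: linearity reduces the evolution to multiplication by $T(x)$ in the finite ring $R = \mathbb{Z}_n[x]/(x^\sigma-1)$, so any trajectory is the trajectory of $1$ multiplied pointwise by the initial polynomial, and hence inherits a cycle length dividing $\tau_1$. The paper does not supply its own proof of this proposition --- it simply quotes it as Lemma~3.4 of \cite{martin1984algebraic} --- and your write-up is essentially the argument found there.
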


Therefore, we may define the longest temporal period 
$\Pi_{\sigma}(a, b; n)$ of an additive rule $T(x) = a + bx$, 
as the smallest $k$, such that 
$$(a + bx)^{k + \ell} = (a + bx)^\ell, \text{ in } \mathbb{Z}_n[x]/(x^\sigma - 1),$$
for some $\ell\ge 0$. We will 
refer to $\Pi_{\sigma}(a, b; n)$ 
as simply the \textbf{period} of $T(x)$. 
The largest period is thus
$$\pi_\sigma(n)= \max_{a, b\in\mathbb{Z}_n} \Pi_\sigma(a, b; n).$$

We use the standard notation $\mathbb{Z}_n[i]$ (where $i = \sqrt{-1}$) and $\mathbb{Z}_n[\omega]$ (where $\omega = e^{2\pi i/3}$) for Gaussian integers modulo $n$ and Eisenstein integers modulo $n$.
 
For a finite ring $R$ with unity, we denote by $R^\times$ its 
multiplicative group and, define the 
(multiplicative) \textbf{order} $\ord(x)$ for any $x\in R$ to be 
the smallest integer $k$ so that $x^k=1$ if $x\in R^\times$, 
and let $\ord(x)=1$ otherwise. 
Note that this is the standard definition when $x\in R^\times$. Recall that 
\begin{equation*}
\begin{aligned}
&\mathbb{Z}_n^\times = \{a: \gcd(a, n) = 1\},\\
&\mathbb{Z}_n[i]^\times = \{a + bi: a, b \in \mathbb{Z}_n, \gcd(a^2 + b^2, n) = 1\},\\
&\mathbb{Z}_n[\omega]^\times = \{a + b\omega: a, b \in \mathbb{Z}_n, \gcd(a^2 + b^2 - ab, n) = 1\}.
\end{aligned}
\end{equation*} 
Then we define 
\begin{equation}
\begin{aligned}
&\Lambda_2(a, b; n) =\ord(a+b) \text{ in } \Z_n, \\
&\Lambda_3(a, b; n) = \ord(a+b\omega) \text{ in } \Z_n[\omega],
\\
&\Lambda_4(a, b; n) = \ord(a+bi) \text{ in } \Z_n[i].
\end{aligned}
\end{equation}
%
Furthermore, we let 
$$\lambda_\sigma(n) = \max_{a, b \in\mathbb{Z}_n}\Lambda_\sigma(a, b; n), 
$$ 
for $\sigma = 2$, $3$, and $4$, be the 
exponents of the multiplicative groups $\mathbb{Z}_n^\times$,  
$\mathbb{Z}_n[\omega]^\times$, and $\mathbb{Z}_n[i]^\times$.   
In Section \ref{section: exponent of groups}, we obtain explicit 
formulas for $\lambda_\sigma(n)$ for these three $\sigma$'s.


In the sequel, we will 
use $p$, and $p_1, p_2 \dots$ to denote prime numbers; 
for an arbitrary $n$, we write its prime decomposition as $n = p_1^{m_1}\dots p_k^{m_k}$ or as $n = 2^{m_2}3^{m_3} \dots p^{m_p}$.
When $p \nmid \sigma$, we use $\text{ord}_\sigma(p)$ to denote the order of $p$ in $\Z_\sigma$. 
We now list several useful results from \cite{martin1984algebraic}.

\begin{prop} \label{proposition: MOW 4.3}
(Lemma 4.3 in \cite{martin1984algebraic}) If $p \bigm| \sigma$, then $\Pi_\sigma\left(a, b; p\right) \bigm| p\Pi_{\sigma/p}\left(a, b; p\right)$.
\end{prop}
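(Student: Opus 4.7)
The plan is to exploit the fact that we work over the prime field $\mathbb{Z}_p$, so the Frobenius map $u \mapsto u^p$ is a ring homomorphism on $\mathbb{Z}_p[x]$. In particular, since $p \mid \sigma$, the Freshman's dream yields the crucial factorization
\[
x^\sigma - 1 = \bigl(x^{\sigma/p} - 1\bigr)^p \quad \text{in } \mathbb{Z}_p[x].
\]
This single identity will convert a period relation modulo $x^{\sigma/p} - 1$ into a period relation modulo $x^\sigma - 1$, at the cost of multiplying both the period and the pre-period by $p$.

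To carry this out, set $\tau = \Pi_{\sigma/p}(a, b; p)$. By definition, there is some $\ell \ge 0$ and some $Q(x) \in \mathbb{Z}_p[x]$ with
\[
(a + bx)^{\tau + \ell} - (a + bx)^\ell = Q(x)\bigl(x^{\sigma/p} - 1\bigr).
\]
Raising this equation to the $p$-th power, and applying the Frobenius identity $(u - v)^p = u^p - v^p$ on the left together with the factorization above on the right, yields
\[
(a + bx)^{p(\tau + \ell)} - (a + bx)^{p\ell} = Q(x)^p \bigl(x^\sigma - 1\bigr).
\]
Hence $(a+bx)^{p\tau + p\ell} = (a+bx)^{p\ell}$ in $\mathbb{Z}_p[x]/(x^\sigma - 1)$, so $p\tau$ is a valid value of $k$ in the definition of $\Pi_\sigma(a, b; p)$ (with pre-period $p\ell$).

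The one remaining point is the standard observation that the minimal $k$ appearing in the definition of an eventual period divides every other valid $k'$; this is a general fact about eventually periodic orbits and follows by taking $k'$ modulo $\Pi_\sigma(a,b;p)$ and iterating the minimal relation. Applied here, this gives $\Pi_\sigma(a, b; p) \mid p\tau = p\,\Pi_{\sigma/p}(a, b; p)$. I do not anticipate any serious obstacle beyond setting up the Frobenius identity correctly and tracking the pre-period $\ell$, both of which are routine; the substantive content is concentrated in the single factorization $x^\sigma - 1 = (x^{\sigma/p} - 1)^p$, which is specific to characteristic $p$ and fails without the assumption $p \mid \sigma$.
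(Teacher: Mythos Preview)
Your argument is correct. The paper does not supply its own proof of this proposition; it merely cites Lemma~4.3 of Martin--Odlyzko--Wolfram, so there is nothing to compare against beyond noting that the Frobenius identity $x^\sigma-1=(x^{\sigma/p}-1)^p$ in $\mathbb{Z}_p[x]$ is exactly the mechanism behind that lemma as well, and your handling of the pre-period and the divisibility of the minimal eventual period is sound.
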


\begin{prop} \label{proposition: MOW 4.1}
(Theorem 4.1 and (B.8) in \cite{martin1984algebraic}) If $p \nmid \sigma$ and $\sigma\ge 2$, then 
$$\Pi_{\sigma}\left(a, b; p\right) \bigm| \left(p^{\text{\rm ord}_\sigma(p)} - 1\right)$$
 and $\text{\rm ord}_\sigma(p) \le \sigma - 1$. Furthermore, 
$\Pi_{1}\left(a, b; p\right) \bigm| \left(p - 1\right)$.
\end{prop}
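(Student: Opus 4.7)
The plan is to exploit the ring decomposition of $R = \mathbb{F}_p[x]/(x^\sigma - 1)$ available precisely because $p \nmid \sigma$. Since the formal derivative $\sigma x^{\sigma-1}$ of $x^\sigma - 1$ is nonzero and coprime to $x^\sigma - 1$ in $\mathbb{F}_p[x]$, the polynomial $x^\sigma - 1$ is separable and factors into pairwise distinct irreducibles $g_1(x)\cdots g_s(x)$. By the Chinese Remainder Theorem,
\[
R \;\cong\; \prod_{j=1}^{s}\mathbb{F}_p[x]/(g_j(x)) \;\cong\; \prod_{j=1}^{s}\mathbb{F}_{p^{d_j}}, \qquad d_j = \deg g_j.
\]
This is the key step because it turns the eventual period of $a + bx$ in $R$ into the simultaneous eventual periods of its images in the multiplicative monoids of finitely many finite fields.

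Next I would identify the $d_j$. The roots of $x^\sigma - 1$ in $\overline{\mathbb{F}_p}$ are the $\sigma$-th roots of unity; they lie in $\mathbb{F}_{p^m}$ iff $\sigma \bigm| p^m - 1$, so the smallest field containing all of them is $\mathbb{F}_{p^d}$ with $d = \ord_\sigma(p)$. Each irreducible factor $g_j$ corresponds to a Frobenius orbit among these roots, so $d_j \bigm| d$ and hence $p^{d_j} - 1 \bigm| p^d - 1$.

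Now I examine the image of $a + bx$ componentwise. If the image in $\mathbb{F}_{p^{d_j}}$ is nonzero, its multiplicative order divides $p^{d_j} - 1$, which in turn divides $p^d - 1$; if the image is zero, that component becomes $0$ after one multiplication and imposes no constraint beyond allowing a pre-period $\ell \ge 1$. Therefore $\Pi_\sigma(a, b; p)$ is the least common multiple of the orders from the nonzero components, and this lcm divides $p^{\ord_\sigma(p)} - 1$. The bound $\ord_\sigma(p) \le \sigma - 1$ is immediate from Fermat--Euler: $\ord_\sigma(p) \bigm| \phi(\sigma) \le \sigma - 1$ for $\sigma \ge 2$. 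The $\sigma = 1$ case is the degenerate version of the same argument, with $R = \mathbb{F}_p$, $a + bx \mapsto a + b$, and $(a + b)^{p - 1} = 1$ whenever $a + b \ne 0$. The only subtle point --- not really an obstacle --- is the careful handling of the zero components, which is precisely why the ``pre-period'' $\ell$ in the definition of $\Pi$ is indispensable; without it one would be forced to treat $\Pi$ as a multiplicative order, which fails in the non-unit components.
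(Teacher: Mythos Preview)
Your argument is correct. The paper does not give its own proof of this proposition but simply cites \cite{martin1984algebraic}; your CRT decomposition of $\mathbb{F}_p[x]/(x^\sigma-1)$ into a product of finite fields, followed by the observation that each factor has degree dividing $\ord_\sigma(p)$, is exactly the standard route taken in that reference, and your handling of the zero components (and the role of the pre-period $\ell$) is accurate.
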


\begin{prop} \label{proposition: MOW 4.4}
(Theorem 4.4 in \cite{martin1984algebraic}) For $n = p_1^{m_1}\dots p_k^{m_k}$, we have
$$\Pi_{\sigma}(a, b; n) = \text{\rm lcm} \left(
\Pi_\sigma(a, b; p_1^{m_1}), \dots,
\Pi_\sigma(a, b; p_k^{m_k}) \right).$$
\end{prop}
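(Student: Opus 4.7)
The plan is to reduce the claim to the Chinese Remainder Theorem applied to the ring $\mathbb{Z}_n[x]/(x^\sigma-1)$. Since the $p_i$ are distinct primes, the standard CRT isomorphism $\mathbb{Z}_n \xrightarrow{\sim} \prod_{i=1}^k \mathbb{Z}_{p_i^{m_i}}$ extends coefficient-wise to $\mathbb{Z}_n[x]\xrightarrow{\sim}\prod_i \mathbb{Z}_{p_i^{m_i}}[x]$, and because the ideal $(x^\sigma-1)$ is generated by a polynomial with unit leading coefficient, it descends to a ring isomorphism
$$\Psi:\mathbb{Z}_n[x]/(x^\sigma-1)\;\xrightarrow{\sim}\;\prod_{i=1}^{k}\mathbb{Z}_{p_i^{m_i}}[x]/(x^\sigma-1).$$
Under $\Psi$, the polynomial $T(x)=a+bx$ is carried to the tuple $(T_1(x),\ldots,T_k(x))$, where $T_i(x)=(a\bmod p_i^{m_i})+(b\bmod p_i^{m_i})\,x$, and since $\Psi$ is a ring homomorphism, $T(x)^t$ corresponds to $(T_1(x)^t,\ldots,T_k(x)^t)$ for every $t\ge 0$.

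The identity defining the period, $T(x)^{k+\ell}=T(x)^{\ell}$ in $\mathbb{Z}_n[x]/(x^\sigma-1)$, is therefore equivalent to the simultaneous identities $T_i(x)^{k+\ell}=T_i(x)^{\ell}$ in each factor. I would then invoke the following elementary fact about eventually periodic sequences in a product monoid: if each component sequence $\{T_i(x)^t\}_{t\ge 0}$ is eventually periodic with period $\Pi_i := \Pi_\sigma(a,b;p_i^{m_i})$ and preperiod $L_i$, then the product sequence is eventually periodic with preperiod $\max_i L_i$ and period exactly $\text{lcm}(\Pi_1,\ldots,\Pi_k)$. The ``divides'' direction is immediate from componentwise periodicity; for the reverse, note that once $t\ge\max_i L_i$ every component is on its cycle, so any common period of the product must be a multiple of every $\Pi_i$, hence of their lcm.

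Combining these two observations yields
$$\Pi_\sigma(a,b;n)=\text{lcm}\bigl(\Pi_\sigma(a,b;p_1^{m_1}),\ldots,\Pi_\sigma(a,b;p_k^{m_k})\bigr).$$
I do not anticipate any step to pose a substantial obstacle; the only mild subtlety is that the definition of $\Pi_\sigma$ allows an unspecified preperiod $\ell$, so one must verify that a single $\ell$ can be chosen uniformly across the factors, which is handled by taking $\ell=\max_i L_i$. Everything else is bookkeeping around the CRT isomorphism.
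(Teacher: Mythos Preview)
Your proof is correct. The paper does not prove this proposition itself but simply cites it as Theorem~4.4 in \cite{martin1984algebraic}; your CRT-based argument is the standard route and almost certainly matches the original, so there is nothing substantive to compare.
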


\begin{prop} \label{proposition: MOW 4.5}
(Theorem 4.5 in \cite{martin1984algebraic}) Let $m\ge 2$ be an integer. Then
$\Pi_\sigma\left(a, b; p^{m}\right)$ either equals $p\Pi_\sigma\left(a, b; p^{m - 1}\right)$ or $\Pi_\sigma\left(a, b; p^{m - 1}\right)$.
\end{prop}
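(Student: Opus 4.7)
The plan is to prove the two divisibilities $\tau \mid \Pi_\sigma(a,b; p^m)$ and $\Pi_\sigma(a,b; p^m) \mid p\tau$, where $\tau := \Pi_\sigma(a,b;p^{m-1})$. Since $p$ is prime, these together force $\Pi_\sigma(a,b;p^m) \in \{\tau, p\tau\}$. Throughout I will write $T = T(x) = a+bx$ and work in $R_m := \mathbb{Z}_{p^m}[x]/(x^\sigma - 1)$, using the ring homomorphism $R_m \to R_{m-1}$ induced by reduction mod $p^{m-1}$.

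First I would record the easy divisibility. The set $S_m := \{k \ge 1 : T^{k+\ell} = T^\ell \text{ in } R_m \text{ for some } \ell \ge 0\}$ is closed under addition: if $T^{k_i+\ell_i} = T^{\ell_i}$ for $i=1,2$, take $\ell = \max(\ell_1,\ell_2)$ and note that $T^{k_1+k_2+\ell} = T^{k_1+\ell} = T^\ell$. Hence its minimum $\Pi_\sigma(a,b;p^m)$ divides every element of $S_m$. Reducing any relation in $S_m$ modulo $p^{m-1}$ gives a relation in $R_{m-1}$, so $S_m \subseteq S_{m-1}$, which implies $\tau = \min S_{m-1}$ divides every element of $S_m$, and in particular $\tau \mid \Pi_\sigma(a,b;p^m)$.

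For the harder direction I would use a lifting argument via the binomial theorem. By definition of $\tau$, there is some $\ell'$ and some $W \in R_m$ with $T^{\tau + \ell'} = T^{\ell'} + p^{m-1} W$ in $R_m$. Raising both sides to the $p$-th power,
\begin{equation*}
T^{p(\tau+\ell')} \;=\; \bigl(T^{\ell'} + p^{m-1} W\bigr)^p \;=\; \sum_{k=0}^{p} \binom{p}{k} T^{\ell'(p-k)}\bigl(p^{m-1} W\bigr)^k.
\end{equation*}
The $k=0$ term is $T^{p\ell'}$. For $1 \le k \le p-1$, the factor $\binom{p}{k}$ contributes one $p$ while $(p^{m-1})^k$ contributes $p^{k(m-1)}$, so the total $p$-adic valuation is at least $1 + k(m-1) \ge m$ (since $m \ge 2$ and $k \ge 1$). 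For $k=p$ the valuation is $p(m-1) \ge m$, which is equivalent to $m(p-1) \ge p$; this holds for $m \ge 2$, $p \ge 2$ (even in the worst case $m=p=2$ it reads $2 \ge 2$). Thus every $k \ge 1$ term vanishes modulo $p^m$, giving $T^{p\tau + p\ell'} = T^{p\ell'}$ in $R_m$. Hence $p\tau \in S_m$, and so $\Pi_\sigma(a,b;p^m) \mid p\tau$.

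The main obstacle I anticipate is exactly the $p$-adic bookkeeping for small $m$ and $p$, which is why it is essential that the hypothesis $m \ge 2$ enters precisely at the point where one needs $m(p-1) \ge p$; with $m=1$ the argument breaks down, which is consistent with the fact that $\Pi_\sigma(a,b;p)$ need not divide $\Pi_\sigma(a,b;1) = 1$. Combining the two divisibilities with the primality of $p$ completes the proof.
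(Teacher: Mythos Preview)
Your argument is correct and self-contained; the paper itself gives no proof of this proposition, only citing Theorem~4.5 of Martin--Odlyzko--Wolfram. Your lifting step via the binomial theorem is exactly the standard route, and your $p$-adic bookkeeping (including the borderline case $m=p=2$) is handled carefully.

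One minor logical slip worth fixing: from ``$S_m$ is closed under addition'' you conclude ``its minimum divides every element of $S_m$''. Closure under addition alone does not imply this (e.g.\ $\{3,5,6,8,9,10,\ldots\}$). What you need is that $S_m$ is also closed under positive differences, which follows by the same device: if $k_1>k_2$ both lie in $S_m$, then for large enough $\ell$ one has $T^{k_1+\ell}=T^\ell=T^{k_2+\ell}$, hence $T^{(k_1-k_2)+(k_2+\ell)}=T^{k_2+\ell}$, so $k_1-k_2\in S_m$. Equivalently, since $R_m$ is finite the sequence $T^0,T^1,\ldots$ is eventually periodic with some period $\tau_0$, and $S_m=\tau_0\mathbb Z_{\ge 1}$. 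Either remark repairs the gap with no change to the rest of the proof.
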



As a consequence of the above results, we obtain the  
following upper bound. 

\begin{cor} \label{corollary: main}
Let $\sigma \ge 2$, then $\max_{f \in \mathcal{A}_n}X_{\sigma, n}(f) \le n^{\sigma - 1}$, for all $n \in \mathbb{N}.$
\end{cor}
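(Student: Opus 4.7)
The plan is to bound $\Pi_\sigma(a,b;n)$ by reducing to prime-power moduli via Proposition \ref{proposition: MOW 4.4}, then to prime moduli via Proposition \ref{proposition: MOW 4.5}, and finally handling the prime case by combining Propositions \ref{proposition: MOW 4.3} and \ref{proposition: MOW 4.1}. Specifically, since $\max_{f\in\mathcal A_n} X_{\sigma,n}(f)=\pi_\sigma(n)$, it suffices to show $\Pi_\sigma(a,b;n)\le n^{\sigma-1}$ for every pair $a,b\in\Z_n$. Writing $n=p_1^{m_1}\cdots p_k^{m_k}$ and using $\mathrm{lcm}\le\prod$, Proposition \ref{proposition: MOW 4.4} reduces the task to proving $\Pi_\sigma(a,b;p^m)\le (p^m)^{\sigma-1}$ for each prime power $p^m\mid n$.

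For each prime power, I would iterate Proposition \ref{proposition: MOW 4.5} $m-1$ times to obtain
\[
\Pi_\sigma(a,b;p^m)\le p^{m-1}\,\Pi_\sigma(a,b;p),
\]
thereby reducing the whole problem to a uniform bound $\Pi_\sigma(a,b;p)\le p^{\sigma-1}$ at the prime level. Plugging this in yields $\Pi_\sigma(a,b;p^m)\le p^{m+\sigma-2}$, and the elementary identity $m(\sigma-1)-(m+\sigma-2)=(m-1)(\sigma-2)\ge 0$ (valid for $\sigma\ge 2$, $m\ge 1$) then gives the desired $p^{m(\sigma-1)}$ bound.

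The remaining task is to establish $\Pi_\sigma(a,b;p)\le p^{\sigma-1}$ for every prime $p$ and $\sigma\ge 2$. I would split into two cases. If $p\nmid\sigma$, Proposition \ref{proposition: MOW 4.1} gives directly $\Pi_\sigma(a,b;p)\mid p^{\mathrm{ord}_\sigma(p)}-1$ with $\mathrm{ord}_\sigma(p)\le\sigma-1$, finishing that case. If $p\mid\sigma$, factor $\sigma=p^s\sigma'$ with $\gcd(p,\sigma')=1$ and $s\ge 1$, and iterate Proposition \ref{proposition: MOW 4.3} $s$ times to obtain $\Pi_\sigma(a,b;p)\le p^s\,\Pi_{\sigma'}(a,b;p)$. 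One then bounds $\Pi_{\sigma'}(a,b;p)$ by Proposition \ref{proposition: MOW 4.1} (using either $\sigma'\ge 2$ or the $\Pi_1$ clause for $\sigma'=1$) and performs a short arithmetic check that $p^s\cdot\Pi_{\sigma'}(a,b;p)\le p^{\sigma-1}=p^{p^s\sigma'-1}$.

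The main obstacle I anticipate is precisely this last arithmetic check in the sub-case $\sigma'=1$ (i.e., $\sigma$ a pure power of $p$), where the Proposition \ref{proposition: MOW 4.1} bound $\Pi_1(a,b;p)\le p-1$ must be combined with the factor $p^s$, making the inequality $p^s(p-1)\le p^{p^s-1}$ tight for $(p,s)=(2,1)$. Verifying that this edge case still respects the bound (it does, with equality), and more generally that $p^s-1-s\ge 0$ with equality only at $(p,s)=(2,1)$ where $p-1=1$, is the one place the argument could go wrong and deserves careful attention.
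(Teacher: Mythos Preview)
Your proposal is correct and follows essentially the same route as the paper's proof: reduce to prime powers via Proposition~\ref{proposition: MOW 4.4}, then to primes via Proposition~\ref{proposition: MOW 4.5}, and handle the prime level with Propositions~\ref{proposition: MOW 4.3} and~\ref{proposition: MOW 4.1}, splitting according to whether $p\mid\sigma$. The only difference is organizational: you first isolate the clean intermediate bound $\Pi_\sigma(a,b;p)\le p^{\sigma-1}$ and then combine with $p^{m-1}$ using $(m-1)(\sigma-2)\ge 0$, whereas the paper carries the exponents $m_j,n_j,\sigma_j$ together and verifies the single combined inequality $m_j+n_j+\sigma_j-2\le m_j(p_j^{n_j}\sigma_j-1)$ (with the $(p_j-1)$ factor kept separate when $\sigma_j=1$). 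Your edge case $(p,s)=(2,1)$, where $p^s(p-1)=p^{p^s-1}=2$, is exactly the paper's boundary case $\sigma_j=1$, $p_j=2$, $n_j=1$, handled there by the inequality $m_j+n_j-1\le m_jn_j$.
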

\begin{proof}
Let $n = p_1^{m_1} \dots p_k^{m_k}$ be the prime decomposition of $n$. For every $j=1,\ldots, k$ write $\sigma = p_j^{n_j} \sigma_j$, where $n_j\ge 0$ and  $\sigma_j$ is such that $p_j \nmid \sigma_j$. Let $\epsilon_j=1$ if $\sigma_j=1$, and 
$\epsilon_j=0$ otherwise.
For any $a, b\in \mathbb{Z}_n$, 
\begin{align*}
\Pi_\sigma(a, b; n) 
& = \text{lcm}\left(\Pi_\sigma(a, b; p_1^{m_1}),\dots, \Pi_\sigma(a, b; p_k^{m_k})\right) \quad (\text{Proposition \ref{proposition: MOW 4.4}}) \\
& \le 
\prod_{j=1}^k   
p_j^{m_j - 1} \Pi_\sigma(a, b; p_j) \quad (\text{Proposition \ref{proposition: MOW 4.5}})\\
& \le 
\prod_{j=1}^k p_j^{m_j+n_j+\sigma_j-2} (p_j-1)^{\epsilon_j}
\quad (\text{Propositions~\ref{proposition: MOW 4.3} and~\ref{proposition: MOW 4.1}})\\
&  \le 
\prod_{j=1}^k p_j^{m_j(\sigma-1)}=n^{\sigma-1}, 
\end{align*}
provided that the inequality  
\begin{equation}\label{main-cor-eq1}
m_j+n_j+\sigma_j-2\le m_j(p_j^{n_j}\sigma_j-1)
\end{equation}
holds when either $\sigma_j\ge 2$ or $p_j=2$, and the inequality
\begin{equation}\label{main-cor-eq2}
m_j+n_j+\sigma_j-1\le m_j(p_j^{n_j}\sigma_j-1)
\end{equation}
holds when $\sigma_j=1$ and $p_j\ge 3$. 

Note that $\sigma_j=1$ implies that $n_j\ge 1$. 
Next, observe that $p_j^{n_j}\ge 2^{n_j}\ge n_j+1$. Assume 
first that 
$\sigma_j\ge 2$. Then we have $m_jp_j^{n_j}\sigma_j\ge 
m_j(n_j+1)\sigma_j\ge n_j\sigma_j +2m_j$. Moreover, if $n_j\ge 1$, then
$n_j\sigma_j-n_j-\sigma_j+1=(n_j-1)(\sigma_j-1)\ge 0$ 
and so (\ref{main-cor-eq1}) holds. If $n_j=0$, then 
(\ref{main-cor-eq1}) reduces to $\sigma_j-2\le m_j(\sigma_j-2)$, 
which again holds. Next 
we assume that $\sigma_j=1$ and $p_j=2$. Then 
(\ref{main-cor-eq1}) follows from $m_j+n_j-1\le m_jn_j$. 
Finally, assume that $\sigma_j=1$  and $p_j\ge 3$. Then 
the inequality (\ref{main-cor-eq2}) follows from 
$n_j\le 3^{n_j}-2$. The equalities (\ref{main-cor-eq1})and 
(\ref{main-cor-eq2}) are thus established and the proof
completed.
\end{proof}

\subsection{Exponents of the multiplicative groups} 
\label{section: exponent of groups}
In this section, we find formulas for $\lambda_\sigma(n)$,  
$\sigma = 2$, $3$, and $4$, i.e., the exponents of multiplicative groups $\mathbb{Z}_n^\times$, $\mathbb{Z}_n[\omega]^\times$, and $\mathbb{Z}_n[i]^\times$.

\begin{lem} \label{lemma: lcm of exponent}
For $\sigma = 2, 3$ and $4$,
$$\lambda_\sigma(n) = \text{\rm lcm} (\lambda_\sigma(p_1^{m_1}), \dots, \lambda_\sigma(p_k^{m_k})).$$
\end{lem}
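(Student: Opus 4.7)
The plan is to identify $\lambda_\sigma(n)$ as the exponent of the multiplicative group of an appropriate ring, apply the Chinese Remainder Theorem to split that ring by the prime-power decomposition of $n$, and then invoke the standard fact that the exponent of a direct product of finite abelian groups is the lcm of the component exponents.

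First I would observe that, with the convention from Section \ref{section: additive-def} that non-units have order $1$, the quantity $\lambda_\sigma(n)$ coincides with the largest multiplicative order taken over all of $\Z_n$, $\Z_n[\omega]$, or $\Z_n[i]$, and this maximum is necessarily attained on a unit (since any unit already has order $\ge 1$). Since in any finite abelian group the maximum element order equals the group exponent, it follows that $\lambda_\sigma(n)$ is precisely the exponent of $\Z_n^\times$, $\Z_n[\omega]^\times$, or $\Z_n[i]^\times$, for $\sigma=2,3,4$ respectively.

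Next I would decompose each ring via CRT. For $\sigma=2$ this is just the familiar $\Z_n\cong\prod_{j=1}^k\Z_{p_j^{m_j}}$. For $\sigma=3,4$, I would use the presentations $\Z_n[\omega]\cong\Z_n[x]/(x^2+x+1)$ and $\Z_n[i]\cong\Z_n[x]/(x^2+1)$, and note that forming a polynomial quotient commutes with finite direct products of coefficient rings. Writing $R_n^{(\sigma)}$ for the relevant ring, this yields a ring isomorphism
$$R_n^{(\sigma)}\;\cong\;\prod_{j=1}^k R_{p_j^{m_j}}^{(\sigma)},$$
and, restricting to unit groups, $(R_n^{(\sigma)})^\times\cong\prod_{j=1}^k(R_{p_j^{m_j}}^{(\sigma)})^\times$. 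Combining this with $\exp\bigl(\prod_j G_j\bigr)=\mathrm{lcm}(\exp G_j)$ for finite abelian groups $G_j$ delivers the claim.

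There is no real obstacle here: the only step that warrants any verification is that the paper's nonstandard convention (assigning order $1$ to non-units) does not distort the identification of $\lambda_\sigma(n)$ with the group exponent, and that verification is immediate. The argument is entirely an instance of CRT combined with the elementary structure theory of finite abelian groups.
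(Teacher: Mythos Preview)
Your proposal is correct and follows essentially the same route as the paper: apply the Chinese Remainder Theorem to split the relevant ring (and hence its unit group) according to the prime-power factorization of $n$, then use that the exponent of a direct product of finite abelian groups is the lcm of the factors' exponents. You supply more detail than the paper (the polynomial presentations and the remark about the non-unit convention), but the argument is the same.
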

\begin{proof}
By the Chinese Remainder Theorem, $\mathbb{Z}_n^\times$ (respectively, $\mathbb{Z}_n[\omega]^\times$, $\mathbb{Z}_n[i]^\times$)
is isomorphic to the direct product of the $k$ groups 
 $\mathbb{Z}_{p_j^{m_j}}^\times$ (respectively, $\mathbb{Z}_{p_j^{m_j}}[\omega]^\times$,  $\mathbb{Z}_{p_j^{m_j}}[i]^\times$), 
 $j=1,\ldots,k$. 
\end{proof}
 
To find $\lambda_\sigma(n)$, it therefore suffices to find the formulas for $\lambda_\sigma(p^{m})$ for prime $p$.
For $\sigma = 2$, $\lambda_2$ is known as the Carmichael function, which is given by the following explicit formula.
\begin{lem}\label{lemma: lambda_2}
For $m \ge 1$ and $p$ prime,
$$
\lambda_2(p^m) = 
\begin{cases}
2^{m - 1}, & \text{ if } p = 2 \text{ and } m \le 2\\
2^{m - 2}, & \text{ if } p = 2 \text{ and } m \ge 3\\
p^{m - 1}(p - 1), & \text{ if } p > 2
\end{cases}.
$$
\end{lem}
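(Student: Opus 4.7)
The plan is to invoke the classical structure of the unit group $\mathbb{Z}_{p^m}^\times$, since by definition $\lambda_2(p^m)$ is its exponent. The argument splits into two cases depending on the parity of $p$.

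First I would treat odd $p$. The well-known primitive root theorem asserts that $\mathbb{Z}_{p^m}^\times$ is cyclic of order $\phi(p^m) = p^{m-1}(p-1)$: one selects a primitive root $g$ modulo $p$, adjusts it (by replacing $g$ with $g + p$ if necessary) so that $g^{p-1} \not\equiv 1 \pmod{p^2}$, and then a standard induction on $m$ shows that the order of $g$ modulo $p^m$ is exactly $p^{m-1}(p-1)$. The exponent of a cyclic group equals its order, which yields $\lambda_2(p^m) = p^{m-1}(p-1)$.

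Next I would handle $p = 2$. The cases $m = 1$ and $m = 2$ are verified by direct inspection: $\mathbb{Z}_2^\times = \{1\}$ and $\mathbb{Z}_4^\times = \{1, 3\}$ have exponents $1 = 2^0$ and $2 = 2^1$, respectively. For $m \ge 3$ I would appeal to the classical decomposition
$$\mathbb{Z}_{2^m}^\times \;\cong\; \langle -1\rangle \times \langle 5\rangle \;\cong\; \mathbb{Z}_2 \times \mathbb{Z}_{2^{m-2}}.$$
The only nontrivial computational content is that $\mathrm{ord}_{2^m}(5) = 2^{m-2}$, which follows by induction on $k$ from the congruence
$$5^{2^k} \equiv 1 + 2^{k+2} \pmod{2^{k+3}}, \qquad k \ge 0.$$
Indeed, taking $k = m-3$ shows $5^{2^{m-3}} \equiv 1 + 2^{m-1} \not\equiv 1 \pmod{2^m}$, while $k = m-2$ shows $5^{2^{m-2}} \equiv 1 \pmod{2^m}$. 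Since the exponent of a direct product is the lcm of the exponents of the factors, this yields $\lambda_2(2^m) = \mathrm{lcm}(2, 2^{m-2}) = 2^{m-2}$.

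There is no substantive obstacle: the entire argument is standard elementary number theory. The only real decision is stylistic, namely whether to reproduce the short inductive proofs of these structure theorems or to cite them from a standard textbook; given the purely supporting role this lemma plays, a brief citation together with the explicit verifications for small $m$ when $p=2$ should suffice.
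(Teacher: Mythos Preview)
Your proposal is correct and aligns with the paper's treatment: the paper simply cites Carmichael's original note, and you yourself conclude that a brief citation is the appropriate level of detail. Your more explicit sketch of the primitive-root and $\langle -1\rangle \times \langle 5\rangle$ arguments is entirely standard and sound, so there is nothing to correct.
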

\begin{proof}
See \cite{carmichael1910note}.
\end{proof}

The results for $\lambda_3$ and $\lambda_4$ follow 
from the classification of the two multiplicative groups. 
For $\Z_{p^m}[i]^\times$, this task was accomplished 
in \cite{allan2008classification}, while for 
$\Z_{p^m}[\omega]^\times$ we relegate the similar argument  
to the Appendix.

\begin{lem} \label{lemma: lambda_3}
For $m\ge 1$ and $p$ prime,
$$
\lambda_3(p^m) = 
\begin{cases}
6, & \text{ if } p = 3 \text{ and } m = 1\\
2\cdot 3^{m - 1}, & \text{ if } p = 3 \text{ and } m \ge 2\\
p^{m - 1}(p - 1), & \text{ if } p = 1 \mod 3\\
p^{m - 1}(p^2 - 1), & \text{ if }p = 2 \mod 3
\end{cases}.
$$
\end{lem}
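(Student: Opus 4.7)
The plan is to use Lemma~\ref{lemma: lcm of exponent} to reduce to the case $n = p^m$, and then split into three subcases according to how the rational prime $p$ factors in the Eisenstein integers $\mathbb{Z}[\omega]$: $p \equiv 1 \pmod 3$ (split), $p \equiv 2 \pmod 3$ (inert), and $p = 3$ (ramified). These three behaviors correspond precisely to the three branches of the claimed formula.

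In the split case, the Chinese Remainder Theorem gives $\mathbb{Z}[\omega]/(p^m) \cong \mathbb{Z}_{p^m} \times \mathbb{Z}_{p^m}$, so the unit group is $(\mathbb{Z}_{p^m}^\times)^2$ and its exponent is $\lambda_2(p^m) = p^{m-1}(p-1)$ by Lemma~\ref{lemma: lambda_2}. In the inert case, $\mathbb{Z}_{p^m}[\omega]$ is local with residue field $\mathbb{F}_{p^2}$, and I would analyze the short exact sequence
\[
1 \to 1+(p) \to \mathbb{Z}_{p^m}[\omega]^\times \to \mathbb{F}_{p^2}^\times \to 1.
\]
Since $|\mathbb{F}_{p^2}^\times|=p^2-1$ and $|1+(p)|$ is a power of $p$, the two orders are coprime, so the exponent is the product of the exponents of the two factors. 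The cyclic group $\mathbb{F}_{p^2}^\times$ contributes $p^2-1$, and a standard valuation-filtration argument on the principal-unit $p$-group $1+(p)$ yields exponent $p^{m-1}$. Multiplying gives $p^{m-1}(p^2-1)$.

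For the ramified case $p=3$, I would set $\pi = 1 - \omega$ and note that $\pi^2 = -3\omega$, so $(3)^m = (\pi)^{2m}$ and $\mathbb{Z}_{3^m}[\omega] \cong \mathbb{Z}[\omega]/(\pi^{2m})$ is local with residue field $\mathbb{F}_3$. Thus $\lambda_3(3^m) = \mathrm{lcm}\bigl(2, \text{exponent of } 1+(\pi)\bigr)$. A direct binomial expansion using $3 = (1+\omega)\pi^2$ shows that $(1+\pi)^3 = 1 + \omega\pi^4 + (1+\omega)\pi^5$, so the valuation jumps from $1$ to $4$ rather than to $3$. Iterating, an induction gives $v_\pi\bigl((1+\pi)^{3^k}-1\bigr) = 2k+2$ for all $k \ge 1$, and an analogous calculation for arbitrary $y \in (\pi)$ shows that no element of $1+(\pi)$ has larger order than $1+\pi$. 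Thus the order of $1+\pi$ equals the exponent of $1+(\pi)$, which works out to $3$ for $m \in \{1,2\}$ and $3^{m-1}$ for $m \ge 3$; combining with $|\mathbb{F}_3^\times|=2$ yields $\lambda_3(3)=6$ and $\lambda_3(3^m) = 2\cdot 3^{m-1}$ for $m \ge 2$.

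The main obstacle is the ramified case: because $v_\pi(3)=2$, the two terms $3y$ and $y^3$ in $(1+y)^3 - 1$ collide at valuation $3$ when $v_\pi(y)=1$, producing a nontrivial cancellation that shifts the resulting valuation to $4$. This breaks the clean filtration behavior available in the unramified settings and forces careful bookkeeping of coefficients modulo $\pi$ throughout the induction. The full abelian-group classification of $\mathbb{Z}_{p^m}[\omega]^\times$, from which the stated exponent formula is a direct corollary, is carried out in the Appendix.
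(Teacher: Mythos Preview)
Your approach is correct and takes a genuinely different route from the paper. The paper's proof consists of a single sentence invoking Theorem~\ref{theorem: eisenstein} in the Appendix, which gives the full invariant-factor decomposition of $\mathbb{Z}_{p^m}[\omega]^\times$; the exponent is then read off. You instead compute the exponent directly from the splitting behaviour of $p$ in $\mathbb{Z}[\omega]$: CRT in the split case, and a principal-unit filtration in the inert and ramified cases, with the key ramified computation $v_\pi\bigl((1+\pi)^{3^k}-1\bigr)=2k+2$ doing the work that the paper's counting of order-$2$ and order-$4$ elements does. Your argument is more economical when only the exponent is wanted, while the paper's route yields strictly more structural information. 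One caution: your phrase ``standard valuation-filtration argument'' in the inert case glosses over a wrinkle at $p=2$, where $(1+y)^2-1=2y+y^2$ has two terms of equal $2$-adic valuation when $v_2(y)=1$; this is easily patched (e.g.\ $1+2\omega$ realises order $2^{m-1}$, and once $v_2\ge 2$ the filtration behaves), but it is exactly the phenomenon that forces the separate treatment of $p=2$ in Lemma~\ref{lemma: eisenstein 2}. Also, the opening reference to Lemma~\ref{lemma: lcm of exponent} is unnecessary here, since the statement is already formulated for prime powers.
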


\begin{proof} The claim follows from Theorem~\ref{theorem: eisenstein} in the Appendix.
\end{proof}

\begin{lem}\label{lemma: lambda_4}
For $m\ge 1$ and $p$ prime,

$$
\lambda_4(p^m) = 
\begin{cases}
2^m, & \text{ if } p = 2 \text{ and } m \le 2\\
2^{m - 1}, & \text{ if } p = 2 \text{ and } m \ge 3\\
p^{m - 1}(p - 1), & \text{ if } p = 1 \mod 4\\
p^{m - 1}(p^2 - 1), & \text{ if }p = 3 \mod 4
\end{cases}.
$$
\end{lem}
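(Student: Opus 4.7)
The plan is to read the exponent of $\mathbb{Z}_{p^m}[i]^\times$ off the classification given in \cite{allan2008classification}, treating the cases $p\equiv 1\pmod 4$, $p\equiv 3\pmod 4$, and $p=2$ separately; the strategy parallels the one sketched (via the Appendix) for Lemma~\ref{lemma: lambda_3}.

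For $p\equiv 1\pmod 4$, the element $-1$ is a quadratic residue mod $p$, so $x^2+1$ splits mod $p$, and by Hensel's lemma it splits mod $p^m$. The Chinese Remainder Theorem then gives a ring isomorphism $\mathbb{Z}_{p^m}[i]\cong \mathbb{Z}_{p^m}\times \mathbb{Z}_{p^m}$, hence $\mathbb{Z}_{p^m}[i]^\times\cong (\mathbb{Z}_{p^m}^\times)^2$. Its exponent coincides with the exponent of $\mathbb{Z}_{p^m}^\times$, which by Lemma~\ref{lemma: lambda_2} equals $p^{m-1}(p-1)$.

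For $p\equiv 3\pmod 4$, the polynomial $x^2+1$ is irreducible mod $p$, so $\mathbb{Z}_{p^m}[i]$ is a local ring with residue field $\mathbb{F}_{p^2}$ and maximal ideal $p\,\mathbb{Z}_{p^m}[i]$. The short exact sequence
\[
1\to 1+p\,\mathbb{Z}_{p^m}[i]\to \mathbb{Z}_{p^m}[i]^\times \to \mathbb{F}_{p^2}^\times \to 1
\]
splits because the two orders $p^{2(m-1)}$ and $p^2-1$ are coprime. The quotient is cyclic of order $p^2-1$, and the $p$-adic logarithm identifies the principal unit group with $(p\,\mathbb{Z}_{p^m}[i],+)\cong (\mathbb{Z}/p^{m-1}\mathbb{Z})^2$, which has exponent $p^{m-1}$. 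Taking the l.c.m., and using $\gcd(p,p^2-1)=1$, gives $\lambda_4(p^m)=p^{m-1}(p^2-1)$.

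The genuinely delicate case is $p=2$, where $2=-i(1+i)^2$ ramifies and $(1+i)$ is the unique maximal ideal of $\mathbb{Z}_{2^m}[i]$. For $m=1$ the unit group is $\{1,i\}$, cyclic of order $2$, and for $m=2$ a direct computation (checking each of the eight units in $\mathbb{Z}_4[i]^\times$) shows $i$ has order $4$ and every other unit has order dividing $4$, so $\lambda_4(4)=4$. For $m\ge 3$ one invokes the explicit classification of $\mathbb{Z}_{2^m}[i]^\times$ from \cite{allan2008classification} and reads off the largest cyclic factor, which has order $2^{m-1}$. I expect the main obstacle to be this last step: because of the ramification the naive splitting of the exponent into a residue-field part and a principal-unit part fails, and one must track the interaction between the cyclic subgroup generated by $i$ and the principal units $1+(1+i)\mathbb{Z}_{2^m}[i]$ carefully---mirroring the familiar drop from $\lambda_2(2^m)=2^{m-1}$ to $2^{m-2}$ in the Carmichael function, and requiring an explicit generator analysis when $m=3$ to verify the transition from the $m\le 2$ formula to the $m\ge 3$ formula.
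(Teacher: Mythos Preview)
Your proposal is correct and follows essentially the same approach as the paper: both read the exponent off the group structure of $\mathbb{Z}_{p^m}[i]^\times$ as determined in \cite{allan2008classification}. The paper simply quotes the full decomposition (in particular $\mathbb{Z}_{2^m}[i]^\times\cong \mathbb{Z}_{2^{m-1}}\times\mathbb{Z}_{2^{m-2}}\times\mathbb{Z}_4$ for $m\ge 2$) and says ``the claim follows,'' whereas you supply the standard arguments behind that classification for odd $p$ (CRT/Hensel when $p\equiv 1$, the principal-unit exact sequence and $p$-adic logarithm when $p\equiv 3$) before likewise citing \cite{allan2008classification} for $p=2$, $m\ge 3$.
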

\begin{proof}
By \cite{allan2008classification}, we have
$$
\mathbb{Z}_p[i]^\times \cong
\begin{cases}
\mathbb{Z}_2, & \text{ if } p = 2\\
\mathbb{Z}_{p - 1} \times \mathbb{Z}_{p - 1}, & \text{ if } p = 1 \mod 4\\
\mathbb{Z}_{p^2 - 1}, & \text{ if } p = 3 \mod 4
\end{cases}
$$
and
$$
\mathbb{Z}_{p^m}[i]^\times \cong
\begin{cases}
\mathbb{Z}_{p^{m - 1}} \times \mathbb{Z}_{p^{m - 2}} \times \mathbb{Z}_4, & \text{ if } p = 2 \text{ and } m \ge 2\\
\mathbb{Z}_{p^{m - 1}} \times \mathbb{Z}_{p^{m - 1}} \times \mathbb{Z}_p[i]^\times, & \text{ if } p\neq 2
\end{cases}.
$$
The claim follows.
\end{proof}

\subsection{Explicit formulas for configurations at time $t$}

The next lemma makes the connection 
between the CA evolution and the integer rings apparent. 

\begin{lem}\label{lemma: explicit evolution}
For $\sigma = 2$,  in $\mathbb{Z}_n[x]/(x^2 - 1)$,
\begin{equation}\label{expl2}
\begin{aligned}
(a + bx)^t 
& = \frac{1}{2}\left[(a + b)^t + (a - b)^t\right]\\
& + \frac{1}{2}\left[(a + b)^t - (a - b)^t\right]x.
\end{aligned}
\end{equation}
For $\sigma = 3$,  in $\mathbb{Z}_n[x]/(x^3 - 1)$,
\begin{equation}\label{expl3}
\begin{aligned}
(a + bx)^t 
& = \frac{1}{3} \left[(a + b)^t + (a + b\omega)^t + (a + b\omega^2)^t\right]\\
& + \frac{1}{3} \left[(a + b)^t + \omega^2(a + b\omega)^t + \omega(a + b\omega^2)^t\right]x\\
& + \frac{1}{3} \left[(a + b)^t + \omega(a + b\omega)^t + \omega^2(a + b\omega^2)^t\right]x^2.
\end{aligned}
\end{equation}
For $\sigma = 4$,  in $\mathbb{Z}_n[x]/(x^4 - 1)$,
\begin{equation}\label{expl4}
\begin{aligned}
(a + bx)^t
& = \frac{1}{4} \left[(a + b)^t + (a - b)^t + (a + bi)^t + (a - bi)^t\right]\\
& + \frac{1}{4} \left[(a + b)^t - (a - b)^t + i(a + bi)^t - i(a - bi)^t\right]x\\
& + \frac{1}{4} \left[(a + b)^t + (a - b)^t - (a + bi)^t - (a - bi)^t\right]x^2\\
& + \frac{1}{4} \left[(a + b)^t - (a - b)^t - i(a + bi)^t + i(a - bi)^t\right]x^3. 
\end{aligned}
\end{equation}
For $\sigma = 6$,  in $\mathbb{Z}_n[x]/(x^6 - 1)$,
\begin{equation}\label{expl6}
\begin{aligned}
(a+bx)^t
& = \frac{1}{6}\left[(a + b)^t + (a - b)^t + (a + b\omega)^t + (a + b\omega^2)^t +(a - b\omega)^t + (a - b\omega^2)^t
\right]\\
& + \frac{1}{6}\left[(a + b)^t - (a - b)^t + \omega^2(a + b\omega)^t + \omega(a + b\omega^2)^t - \omega^2(a - b\omega)^t - \omega(a - b\omega^2)^t
\right]x\\
& + \frac{1}{6}\left[(a + b)^t + (a - b)^t + \omega(a + b\omega)^t + \omega^2(a + b\omega^2)^t + \omega(a - b\omega)^t + \omega^2(a - b\omega^2)^t \right]x^2\\
& + \frac{1}{6}\left[(a + b)^t - (a - b)^t + (a + b\omega)^t + (a + b\omega^2)^t - (a - b\omega)^t - (a - b\omega^2)^t\right]x^3\\
& + \frac{1}{6}\left[(a + b)^t + (a - b)^t + \omega^2(a + b\omega)^t + \omega(a + b\omega^2)^t +\omega^2(a - b\omega)^t + \omega(a - b\omega^2)^t\right]x^4\\
& + \frac{1}{6}\left[(a + b)^t - (a - b)^t + \omega(a + b\omega)^t + \omega^2(a + b\omega^2)^t - \omega(a - b\omega)^t - \omega^2(a - b\omega^2)^t \right]x^5.
\end{aligned}
\end{equation}
\end{lem}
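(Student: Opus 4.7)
My plan is to prove all four formulas by a single discrete Fourier transform argument, applied in an ambient ring that already contains a primitive $\sigma$-th root of unity $\zeta$. Concretely, I take $\zeta=-1\in\Z_n$ for $\sigma=2$; $\zeta=\omega\in\Z_n[\omega]$ for $\sigma=3$; $\zeta=i\in\Z_n[i]$ for $\sigma=4$; and $\zeta=1+\omega\in\Z_n[\omega]$ for $\sigma=6$, noting that $(1+\omega)^2=\omega$ and $(1+\omega)^3=-1$, so the six powers of $1+\omega$ enumerate the sixth roots of unity $\pm 1,\pm\omega,\pm\omega^2$. In each case the quantities $a+b\zeta^k$ appearing on the right-hand sides of \eqref{expl2}--\eqref{expl6} are precisely the evaluations $T(\zeta^k)$ of $T(x)=a+bx$ at the $\sigma$-th roots of unity.

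The first step is to expand $(a+bx)^t$ by the binomial theorem and reduce modulo $x^\sigma-1$ by collapsing $x^s$ to $x^{s\bmod\sigma}$; this gives the integer polynomial identity
\begin{equation*}
(a+bx)^t\;\equiv\;\sum_{j=0}^{\sigma-1}c_j^{(t)}x^j\pmod{x^\sigma-1},\qquad c_j^{(t)}=\sum_{\substack{0\le s\le t\\ s\equiv j\,(\mathrm{mod}\,\sigma)}}\binom{t}{s}a^{t-s}b^s.
\end{equation*}
The second step uses the character-orthogonality relation $\sum_{k=0}^{\sigma-1}\zeta^{k\ell}=\sigma$ if $\sigma\mid\ell$ and $0$ otherwise, which holds in $\Z[\zeta]$ since $\zeta$ is a root of $X^\sigma-1=\prod_k(X-\zeta^k)$. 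Writing $\mathbf{1}[s\equiv j\,(\mathrm{mod}\,\sigma)]=\tfrac1\sigma\sum_{k}\zeta^{k(s-j)}$, inserting it into the formula for $c_j^{(t)}$, and swapping the order of summation yields the key identity
\begin{equation*}
\sigma\,c_j^{(t)}\;=\;\sum_{k=0}^{\sigma-1}\zeta^{-jk}(a+b\zeta^k)^t\qquad\text{in } \Z[\zeta].
\end{equation*}
Specializing $\zeta$ to each of the four choices and tabulating $k=0,\dots,\sigma-1$ reproduces exactly the bracketed expressions in the lemma.

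The main obstacle is the factor $\tfrac{1}{\sigma}$ appearing in \eqref{expl2}--\eqref{expl6}, which does not literally belong to $\Z_n$ when $\gcd(n,\sigma)>1$ (for instance $\sigma=2$ and $n$ even, or $\sigma=3$ and $3\mid n$). The resolution is that the displayed identity above is established without any use of invertibility of $\sigma$: its left-hand side $\sigma c_j^{(t)}$ is the unambiguous element of $\Z[a,b]$ coming from the binomial expansion, and its right-hand side is a character sum in $\Z[\zeta]$. Hence the displays in the lemma are to be read as the assertion that this character sum is always $\sigma$ times the desired integer coefficient of $x^j$, a statement that descends from $\Z[\zeta]$ to $\Z_n[\zeta]$ and therefore holds in the relevant quotient rings $\Z_n[x]/(x^\sigma-1)$, $\Z_n[\omega][x]/(x^\sigma-1)$, or $\Z_n[i][x]/(x^\sigma-1)$ for every $n\ge 1$.
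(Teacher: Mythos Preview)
Your argument is correct and is essentially the same approach as the paper's: the paper's proof simply cites diagonalization of circulant matrices, and your discrete Fourier transform computation is exactly that diagonalization written out explicitly (the eigenvalues of the cyclic-shift multiplication by $a+bx$ are $a+b\zeta^k$, and the change of basis is the DFT). Your careful handling of the $1/\sigma$ issue---establishing $\sigma c_j^{(t)}=\sum_k\zeta^{-jk}(a+b\zeta^k)^t$ in $\Z[\zeta]$ before reducing---matches the interpretation the paper spells out in the paragraph immediately following the lemma.
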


To clarify, say, the formula for $\sigma=6$, the expression 
in each square bracket is evaluated in $\Z[\omega]$ 
first (without 
the reduction modulo $n$), then the result, which must be in $6\Z$, is divided by $6$, and finally is reduced modulo $n$.

\begin{proof}
This follows from diagonalization of circulant matrices;
see, for example, \cite{davis1970circulant}.
\end{proof}

\subsection{The upper bounds}
In this subsection we prove the upper bounds in Theorem \ref{theorem: main of additive rule}.

\begin{lem} \label{lemma: upper bound}
For $n\ge 2$, $\pi_\sigma(n) \le \lambda_\sigma(\sigma n)$ for $\sigma = 2, 3$ and $\pi_6(n) \le \lambda_3(6n)$. 
Moreover, for $n\ge 3$,  $\pi_4(n)\le \lambda_4(n)$.
\end{lem}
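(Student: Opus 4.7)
The plan is to apply Lemma \ref{lemma: explicit evolution} as the unifying tool. For each $\sigma \in \{2, 3, 4, 6\}$, that lemma writes $(a+bx)^t$ in $\mathbb{Z}_n[x]/(x^\sigma - 1)$ as a polynomial whose coefficients have the form $\tfrac{1}{\sigma}S(t)$, where $S(t)$ is a $\mathbb{Z}[\zeta]$-linear combination of powers $(a+b\zeta)^t$ and $\zeta$ ranges over appropriate $\sigma$-th roots of unity. For the coefficients in $\mathbb{Z}_n$ to satisfy $(a+bx)^{t+\tau} \equiv (a+bx)^t$ eventually, it suffices that $S(t+\tau) \equiv S(t) \pmod{\sigma n}$ in $\mathbb{Z}[\zeta]$; this in turn holds as soon as each power $(a+b\zeta)^t$ is eventually periodic modulo $\sigma n$ with period dividing $\tau$. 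The eventual period of any element in a finite commutative ring is bounded by the exponent of the unit group: via a local-ring decomposition, nilpotent components contribute trivial cycles, while unit components contribute orders dividing the group exponent.

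For $\sigma = 2$, the relevant unit group is $\mathbb{Z}_{2n}^\times$ with exponent $\lambda_2(2n)$, giving $\pi_2(n) \le \lambda_2(2n)$. For $\sigma = 3$, the relevant group is $\mathbb{Z}_{3n}[\omega]^\times$ with exponent $\lambda_3(3n)$, noting that orders of real elements $a+b$ in $\mathbb{Z}_{3n}$ divide $\lambda_2(3n)\mid \lambda_3(3n)$. For $\sigma = 6$, all six factors $a \pm b,\ a \pm b\omega,\ a \pm b\omega^2$ live in $\mathbb{Z}[\omega]$, and periods are controlled by $\lambda_3(6n)$.

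The case $\sigma = 4$ is more delicate: the naive argument above yields only $\pi_4(n) \le \lambda_4(4n)$, strictly weaker than $\lambda_4(n)$. For odd $n \ge 3$, I would bypass Lemma \ref{lemma: explicit evolution} and use the Chinese Remainder Theorem directly. Since $2$ is invertible modulo $n$, the factors $x-1$, $x+1$, $x^2+1$ of $x^4-1$ are pairwise coprime over $\mathbb{Z}_n$, yielding the ring isomorphism $\mathbb{Z}_n[x]/(x^4-1) \cong \mathbb{Z}_n \times \mathbb{Z}_n \times \mathbb{Z}_n[i]$ under which $a+bx \mapsto (a+b, a-b, a+bi)$. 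The period of $a+bx$ is then the lcm of these three orders, each of which divides $\lambda_4(n)$ because $\mathbb{Z}_n^\times$ embeds diagonally in $\mathbb{Z}_n[i]^\times$.

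The main obstacle is $\sigma=4$ with even $n$, since the direct CRT fails when $2$ is a zero divisor. I would factor $n = 2^m n'$ (with $n'$ odd) and use Proposition \ref{proposition: MOW 4.4} to reduce to $\Pi_4(a,b;2^m)$ and $\Pi_4(a,b;n')$; the odd part is handled as above. For the $2$-part, the ring $R = \mathbb{Z}_{2^m}[x]/(x^4-1)$ is local with residue field $\mathbb{F}_2$ and maximal ideal $(x-1,\,2)$, so any $a+bx$ with $a+b$ even is nilpotent and has trivial eventual period. For units $a+bx$ (with $a+b$ odd), the surjection $R \twoheadrightarrow \mathbb{Z}_{2^m}[i]$ given by $x \mapsto i$ forces the order of the image $a+bi$ to divide $\lambda_4(2^m)$; the remaining work is to bound the kernel contribution to the order in $R^\times$, which I expect will require combining Propositions \ref{proposition: MOW 4.3} and \ref{proposition: MOW 4.5} with the explicit structure $\mathbb{Z}_{2^m}[i]^\times \cong \mathbb{Z}_{2^{m-1}} \times \mathbb{Z}_{2^{m-2}} \times \mathbb{Z}_4$ from Lemma \ref{lemma: lambda_4}.
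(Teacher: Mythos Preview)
Your route through Lemma~\ref{lemma: explicit evolution} for $\sigma\in\{2,3,6\}$ is correct and genuinely different from the paper's proof. The paper never invokes that lemma for the upper bound; it works prime by prime, using the Martin--Odlyzko--Wolfram recursions (Propositions~\ref{proposition: MOW 4.3}--\ref{proposition: MOW 4.5}) to bound $\Pi_\sigma(a,b;p^m)$ and then matching against the explicit formulas for $\lambda_\sigma(p^m)$. Your argument is more conceptual: the coefficients of $(a+bx)^t$ are $\tfrac1\sigma$ times integer (or $\Z[\omega]$-) combinations of $(a+b\zeta)^t$, so periodicity modulo $n$ follows from periodicity of each $(a+b\zeta)^t$ modulo $\sigma n$, and the eventual period of any element of a finite commutative ring divides the exponent of its unit group. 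Your CRT treatment of $\sigma=4$ with $n$ odd is likewise correct and cleaner than the paper's.

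The gap is in $\sigma=4$ with $n$ even, and it is a bit worse than you acknowledge. After reducing to $n=2^m$ via Proposition~\ref{proposition: MOW 4.4}, your stated goal is to show that the order of a unit $a+bx$ in $R=\Z_{2^m}[x]/(x^4-1)$ divides $\lambda_4(2^m)$, by combining the surjection $R^\times\twoheadrightarrow\Z_{2^m}[i]^\times$ with a bound on the kernel. But this target is false for $m=1$: take $a=0$, $b=1$, so $a+bx=x$ has order $4$ in $R^\times$, while $\lambda_4(2)=2$. No kernel analysis can repair this, because the divisibility $\Pi_4(a,b;2^m)\mid\lambda_4(2^m)$ simply does not hold at $m=1$. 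The paper handles the $2$-primary part differently: it checks by direct computation that $\Pi_4(a,b;2^m)\mid 4$ for $m\le 3$, then uses Proposition~\ref{proposition: MOW 4.5} to get $\Pi_4(a,b;2^m)\mid 2^{m-1}=\lambda_4(2^m)$ for $m\ge 3$. The leftover case $n=2n'$ with $n'\ge 3$ odd is then closed by the separate observation that $4\mid\lambda_4(p^k)$ for every odd prime power, so the factor $4$ coming from $\Pi_4(a,b;2)$ is absorbed into $\lambda_4(n')$ and one still obtains $\Pi_4(a,b;n)\mid\lambda_4(n)$. You will need both of these ingredients---the small-case verification and the absorption argument---to finish.
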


\begin{proof}
We will show that, in all cases, $\Pi_\sigma(a, b; n)$ divides
the corresponding upper bound for all $a, b\in\Z_n$. 
Assume that $p\nmid\sigma$, which automatically holds when $p\ge 5$. In this case, we claim that
\begin{equation} \label{equation: max order divides exponent}
\Pi_\sigma(a, b; p^m) \bigm| \lambda_\sigma(p^m),
\end{equation}
which is clearly enough.
By Propositions~\ref{proposition: MOW 4.5} 
and~\ref{proposition: MOW 4.1}, $\Pi_\sigma(a, b; p^m) \bigm | p^{m - 1}(p^{\ord_\sigma(p)}-1)$. As $\ord_2(p)=1$,
$\ord_3(p) = 1$ when $p \mod 3 = 1$ and $\ord_3(p) = 2$ when $p \mod 3 = 2$,
and $\ord_4(p)=1$ when $p\mod 4=1$ and $\ord_4(p)=2$ when 
$p\mod 4=3$, Lemmas~\ref{lemma: lambda_2}--\ref{lemma: lambda_4}
imply (\ref{equation: max order divides exponent}). 

We 
now consider each 
$\sigma$ separately. Write $n=2^{m_2}3^{m_3}\cdots p^{m_p}$. 

We begin with $\sigma = 2$.
Note that (\ref{equation: max order divides exponent}) holds for $p = 3$, and we next consider powers of $2$. 
For $m = 1$ and $m=2$, it can be directly verified that 
$\Pi_2(a, b; 2^m) \bigm| 2$.
For $m \ge 3$, by Proposition \ref{proposition: MOW 4.5}, $\Pi_2(a, b; 2^m) \bigm| 2^{m - 2} \Pi_2(a, b; 2^2)$, and then
$\Pi_2(a, b; 2^m) \bigm| 2^{m-1}$. Therefore
$$
\Pi_2(a, b; 2^m) \bigm | \lambda_2(2^{m+1}),
$$
which, together with (\ref{equation: max order divides exponent}) and Proposition~\ref{proposition: MOW 4.4}, implies that 
$$\Pi_2(a, b; n)\bigm|
\text{\rm lcm} (\lambda_2(2^{m_2+1}), \dots, \lambda_2(p^{m_p}))
= \lambda_2(2n),
$$
by Lemma~\ref{lemma: lcm of exponent}.

We continue with $\sigma = 3$. Now, (\ref{equation: max order divides exponent}) holds for $p=2$ and we need to consider powers of
$3$. A direct verification shows that $\Pi_3(a, b; 3) \bigm| 6$.
For $m \ge 2$, $\Pi_3(a, b; 3^m) \bigm| 3^{m - 1}\Pi_3(a, b; 3)$ 
and so $\Pi_3(a, b; 3^m)\bigm|2\cdot 3^m$. By
Lemma~\ref{lemma: lambda_3},
$$\Pi_3(a, b; 3^m)\bigm| \lambda_3(3^{m+1})$$
and again 
(\ref{equation: max order divides exponent}), Proposition~\ref{proposition: MOW 4.4}, and Lemma~\ref{lemma: lcm of exponent} imply that  $\Pi_3(a, b; 3^m)\bigm|\lambda_3(3n)$.

Next in line is $\sigma = 4$. 
This time, a direct verification (by computer) 
shows that $\Pi_4(a, b; 2)$, 
$\Pi_4(a, b; 2^2)$, and $\Pi_4(a, b; 2^3)$ all divide $4$.
For $m \ge 3$, we then have $\Pi_4(a, b; 2^m) \bigm| 2^{m - 3}\Pi_4(a, b; 2^3)$, thus $\Pi_4(a, b; 2^m)\bigm| 2^{m - 1}$.
Now, if $n = 2^{m_2} 3^{m_3} \dots p^{m_p}$ and $m_2 \ge 2$ or $m_2 = 0$, the result follows similarly as for $\sigma=2$ 
or $\sigma=3$. 
If $m_2 = 1$, 
$$\Pi_4(a, b; 2\cdot 3^{m_3}\dots p^{m_p}) \bigm| \text{lcm}(4, \lambda_4(3^{m_3}), \dots, \lambda_4(p^{m_p})).$$
But
\begin{align*}
\text{lcm}(4, \lambda_4(3^{m_3}), \dots, \lambda_4(p^{m_p})) &= \text{lcm}(2, \lambda_4(3^{m_3}), \dots, \lambda_4(p^{m_p}))\\& = \text{lcm}(\lambda_4(2), \lambda_4(3^{m_3}), \dots, \lambda_4(p^{m_p}))=\lambda_4(n),
\end{align*}
 as long as one 
of the exponents $m_3,\ldots, m_p$ is nonzero, i.e., 
when $n \ge 3$.
The desired divisibility therefore holds.

Finally, we deal with $\sigma = 6$.  This time, a similar argument shows that
$\Pi_6(a, b; 2^{m_2}) \bigm| 3\cdot 2^{m_2}$
and
$\Pi_6(a, b; 3^{m_3}) \bigm| 2\cdot 3^{m_3}$, for all $m_2, m_3\ge 1$. 
So, 
$\Pi_6(a, b; n)$ divides
$$\text{lcm}(3\cdot 2^{m_2}, 
2\cdot 3^{m_3}, \ldots, \lambda_3(p^{m_p}))=\text{lcm}(\lambda_3(2 \cdot 2^{m_2}), \lambda_3(3 \cdot 3^{m_3}), \dots, \lambda_3(p^{m_p})) = \lambda_3(6n).$$
The desired divisibility is thus established in all cases.
\end{proof}

\subsection{The lower bounds}
\begin{lem} \label{lemma: lower bound for composite n}
If $n$ has prime decomposition $n = p_1^{m_1} \dots p_k^{m_k}$, then, for any $\sigma$, 
\begin{equation}\label{eq-lower1}
\begin{aligned}
\text{\rm lcm}\left(\pi_\sigma(p_1^{m_1}), \dots, \pi_\sigma(p_k^{m_k})\right) \le\pi_\sigma(n).
\end{aligned}
\end{equation}
\end{lem}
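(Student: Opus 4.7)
The plan is to combine the multiplicativity statement of Proposition~\ref{proposition: MOW 4.4} with the Chinese Remainder Theorem on the coefficient ring $\Z_n$, thereby lifting local maximizers at each prime power $p_j^{m_j}$ to a single global $(a,b)\in\Z_n^2$ that attains the desired lcm.

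First, for each $j=1,\ldots,k$, choose $(a_j,b_j)\in\Z_{p_j^{m_j}}^2$ realizing the maximum, that is, $\Pi_\sigma(a_j,b_j;p_j^{m_j})=\pi_\sigma(p_j^{m_j})$; this is possible by the very definition of $\pi_\sigma$. Next, apply CRT to the coefficients: since $p_1^{m_1},\ldots,p_k^{m_k}$ are pairwise coprime, there exist $a,b\in\Z_n$ such that $a\equiv a_j\pmod{p_j^{m_j}}$ and $b\equiv b_j\pmod{p_j^{m_j}}$ for every $j$. The $(a,b)$ so chosen is the candidate rule.

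The key observation is that $\Pi_\sigma(a,b;p_j^{m_j})$ depends on $a,b$ only through their residues modulo $p_j^{m_j}$: the polynomial $(a+bx)^t-1\in\Z_{p_j^{m_j}}[x]/(x^\sigma-1)$ is determined by those residues. Consequently,
$$\Pi_\sigma(a,b;p_j^{m_j})=\Pi_\sigma(a_j,b_j;p_j^{m_j})=\pi_\sigma(p_j^{m_j}).$$
Applying Proposition~\ref{proposition: MOW 4.4} to the lifted $(a,b)\in\Z_n^2$ yields
$$\Pi_\sigma(a,b;n)=\text{\rm lcm}\bigl(\Pi_\sigma(a,b;p_1^{m_1}),\ldots,\Pi_\sigma(a,b;p_k^{m_k})\bigr)=\text{\rm lcm}\bigl(\pi_\sigma(p_1^{m_1}),\ldots,\pi_\sigma(p_k^{m_k})\bigr).$$
Taking the maximum over all additive rules gives $\pi_\sigma(n)\ge\Pi_\sigma(a,b;n)$, which is exactly the stated inequality \eqref{eq-lower1}.

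There is no substantive obstacle here; the only thing to verify carefully is the reduction-invariance of $\Pi_\sigma$ modulo $p_j^{m_j}$, which is immediate from the fact that the quotient ring $\Z_{p_j^{m_j}}[x]/(x^\sigma-1)$ is itself a quotient of $\Z_n[x]/(x^\sigma-1)$ via coefficient reduction.
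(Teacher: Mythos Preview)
Your proof is correct and follows essentially the same approach as the paper: both use the Chinese Remainder Theorem to lift the local maximizers $(a_j,b_j)\in\Z_{p_j^{m_j}}^2$ to a single pair $(a,b)\in\Z_n^2$, and then use that the resulting period is the lcm of the coordinate periods. The paper phrases this last step directly (``a configuration repeats if and only if all $k$ coordinates simultaneously repeat''), whereas you invoke Proposition~\ref{proposition: MOW 4.4}, which amounts to the same thing.
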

\begin{proof}
We identify $\mathbb{Z}_n$ by
$$
\mathbb{Z}_n \cong \mathbb{Z}_{p_1^{m_1}} \times \dots \times \mathbb{Z}_{p_k^{m_k}}.
$$ 
For the CA rule in the $j$th coordinate, we find $a_j, b_j\in \mathbb{Z}_{p_j^{m_j}}$ such that 
$\Pi_\sigma(a_j, b_j; p_j^{m_j}) = \pi_\sigma(p_j^{m_j})$.
Then a configuration repeats if and only if all $k$ coordinates simultaneously repeat.
\end{proof}

As a consequence of Lemma~\ref{lemma: lower bound for composite n}, it suffices to consider the cases when $n=p^m$. 
In each case below, our strategy is to find an $a,b\in \Z_{p^m}$
for which the dynamics never reduces the spatial period and 
such that $\Pi_\sigma(a, b; p^m)$ equals the upper bound 
given by Lemma~\ref{lemma: upper bound}.

\begin{lem} \label{lemma: lambda_2(2n)}
For $\sigma = 2$, we have $\pi_2(p^m)=\lambda_2(2p^m)$.
\end{lem}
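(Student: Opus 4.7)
The plan is to invoke Lemma~\ref{lemma: upper bound}, which gives $\pi_2(p^m) \le \lambda_2(2 p^m)$, and to exhibit for each prime power $p^m$ an explicit rule $T(x) = a + bx$ whose period in $\Z_{p^m}[x]/(x^2-1)$ attains this bound. I would split on the parity of $p$. Verifying that the spatial period stays $2$ is routine in both cases: the spatial period is constant along a cycle, and the initial polynomial $1$ has spatial period $2$.

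For odd $p$, take $a = 0$ and $b = g$, where $g$ is a primitive root modulo $p^m$. Then $T(x) = gx$, and a direct expansion (or formula \eqref{expl2}, legal since $2$ is a unit in $\Z_{p^m}$) gives $(gx)^t = g^t\, x^{t \bmod 2}$ in $\Z_{p^m}[x]/(x^2-1)$. The orbit of $1$ returns to $1$ exactly when $g^t = 1$ and $t$ is even. Because $\ord(g) = \phi(p^m) = p^{m-1}(p-1)$ is already even, the least such $t$ is $\phi(p^m) = \lambda_2(p^m) = \lambda_2(2p^m)$, where the last equality uses $\lambda_2(2) = 1$ and Lemma~\ref{lemma: lcm of exponent}. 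Thus $\Pi_2(0,g;p^m)$ attains the upper bound.

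For $p = 2$ I would split further. When $m = 1$, the rule $T(x) = x$ trivially has period $2 = \lambda_2(4)$. When $m \ge 2$, I would analyze $T(x) = 1 + 2x$ via the key algebraic identity $(1 + x)^2 = 2(1 + x)$ in $\Z[x]/(x^2-1)$. Using it, I would prove by induction on $k \ge 1$ that
\[
(1 + 2x)^{2^k} \;=\; 1 + 2^{k+1}\, u_k\, (1 + x) \qquad \text{in } \Z[x]/(x^2 - 1),
\]
with $u_k$ an odd integer. The base case $k = 1$ is the direct computation $(1+2x)^2 = 5 + 4x = 1 + 4(1+x)$; for the inductive step, squaring the identity and collapsing $(1+x)^2$ via $2(1+x)$ gives the recursion $u_{k+1} = u_k\bigl(1 + 2^{k+1} u_k\bigr)$, which preserves oddness. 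Reducing modulo $2^m$ then shows $(1+2x)^{2^{m-1}} \equiv 1$ while $(1+2x)^{2^{m-2}} \not\equiv 1$, so the order of $1 + 2x$ equals $2^{m-1} = \lambda_2(2^{m+1})$, matching the upper bound.

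The main technical obstacle is precisely the $p = 2$ case: since $2$ is not invertible in $\Z_{2^m}$, the diagonalization \eqref{expl2} collapses and the period can no longer be read off as an order in $\Z_{p^m}^\times$. The identity $(1+x)^2 = 2(1+x)$ is the algebraic substitute that makes iterated squaring tractable and allows a precise count of the $2$-adic valuation of $(1+2x)^{2^k} - 1$, which is what closes the gap to $\lambda_2(2 \cdot 2^m)$.
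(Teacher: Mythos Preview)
Your proof is correct. The overall strategy—quote the upper bound from Lemma~\ref{lemma: upper bound} and exhibit a rule attaining it—matches the paper, but your explicit witnesses differ in both cases.

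For odd $p$, the paper picks any $a,b$ with $a-b$ a primitive root and uses the substitution $x\mapsto 1$ (a ring homomorphism $\Z_{p^m}[x]/(x^2-1)\to\Z_{p^m}$) to force the period to be a multiple of $\ord(a-b)$. Your choice $T(x)=gx$ is more concrete: since $x^2=1$, the orbit of $1$ is visibly $1,\,gx,\,g^2,\,g^3x,\ldots$, and the period can be read off directly. Both work; yours avoids invoking the diagonalization~\eqref{expl2} altogether.

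For $p=2$, the paper lifts an element $c\in\Z_{2^{m+1}}^\times$ of maximal order, sets $a=(c+1)/2$, $b=(c-1)/2$, and reads the period from~\eqref{expl2} over $\Z_{2^{m+1}}$. Your choice $T(x)=1+2x$ together with the identity $(1+x)^2=2(1+x)$ gives a clean $2$-adic valuation argument and avoids the lift entirely; the recursion $u_{k+1}=u_k(1+2^{k+1}u_k)$ is exactly what one needs. (Amusingly, the paper's source contains a commented-out earlier draft using $a=1$, $b=2$ as well.)

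One small expository point: your spatial-period argument (``constant along a cycle, and $1$ has spatial period $2$'') tacitly uses that $1$ actually lies on the cycle, i.e., that $T(x)$ is a unit in $\Z_{p^m}[x]/(x^2-1)$. This is true in every case you treat—it follows immediately once you have shown $T(x)^k=1$ for some $k$—but it would be worth saying so explicitly, since for a non-unit $T(x)$ the seed $1$ is only pre-periodic and the cycle it falls into could in principle have spatial period $1$.
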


\begin{proof} 
We first prove that $a - b \in \mathbb{Z}_{p^m}^\times$ 
implies that the spatial period never reduces. Indeed, such a 
reduction means that the coefficients of $1$ and $x$ 
in (\ref{expl2}) agree at some time $t\ge 1$, and then their
difference $(a-b)^t$ must vanish in $\mathbb{Z}_{p^m}$, 
a contradiction. 

We now assume that $p\ge 3$. 
By definition of $\lambda_2$, we 
can select $a$ and $b$ such that $\Lambda_2(a, -b; p^m) = \lambda_2(p^m)$; in particular, $a - b \in \mathbb{Z}_{p^m}^\times$. Let $k = \Pi_2(a, -b; p^m)$. Then, for some $\ell\ge 0$, $(a - bx)^{k + \ell} = (a - bx)^\ell$ in $\Z_{p^m}[x]/(x^2-1)$.
If we replace $x$ by any number $c\in \Z_{p^m}$ that satisfies $c^2-1=0\mod p^m$, 
we get an equality in $\Z_{p^m}$, so we can substitute $x=1$
to get $(a - b)^{k + \ell} = (a - b)^\ell\mod p^m$. 
As $a-b$ is invertible in $\Z_{p^m}$, $(a - b)^k = 1\mod p^m$. We conclude that
$\lambda_2(p^m) \le \Pi_2(a, - b; p^m) \le \pi_2(p^m)$. As the 
spatial period does not reduce,
the desired conclusion follows from the equality
$\lambda_2(p^m) = \lambda_2(2p^m)$ and Lemma \ref{lemma: upper bound}.

Finally, we assume that $p = 2$. In this case, we need to 
prove that $\pi_2(2^m)=\lambda_2(2^{m+1})$.  A direct 
verification shows that $\pi_2(2)=\pi_2(4)=2$, so we may assume 
that $m\ge 3$, in which case 
$\lambda_2(2^{m+1})=2^{m-1}$.
Pick a $c\in \Z_{2^{m+1}}^\times$ whose order equals 
$\lambda_2(2^{m+1})$. This is an odd number. Let $b=(c-1)/2$ and $a=b+1$, so that $a+b=c$ and $a-b=1$. Clearly
$b\le 2^m-1$, but then also $a\le 2^m-1$, as 
otherwise $c=2^{m+1}-1$, which has order $2$. 
It then follows from 
(\ref{expl2}) that $(a+bx)^{2^{m-1}}=1$ in 
$\Z_{2^m}[x]/(x^2-1)$. Moreover, the coefficient 
of $x$ in $(a+bx)^{2^{m-2}}$ cannot vanish in $\Z_{2^m}$, 
as otherwise $c^{2^{m-2}}=1\mod 2^{m+1}$. It follows that 
$\Pi_2(a, b; 2^m) =2^{m-1}$.
\end{proof}

\begin{lem}\label{lemma: lambda_3(3n)}
For $\sigma = 3$, we have $\pi_3(p^m)=\lambda_3(3p^m)$.
\end{lem}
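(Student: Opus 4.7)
The upper bound $\pi_3(p^m) \le \lambda_3(3p^m)$ is already supplied by Lemma~\ref{lemma: upper bound}, so the task reduces to producing explicit $a, b \in \Z_{p^m}$ for which $\Pi_3(a, b; p^m)$ attains this bound. The plan is to split on whether $p = 3$ or $p \ne 3$, since the second factor $x^2 + x + 1$ of $x^3 - 1$ is coprime to $x - 1$ modulo $p$ only in the latter case, and so the ambient quotient ring decomposes nicely only there; this is the same dichotomy that already drove the proof of Lemma~\ref{lemma: lambda_2(2n)}.

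For $p \ne 3$, the ring homomorphism $\Z_{p^m}[x]/(x^3 - 1) \to \Z_{p^m}[\omega]$ sending $x \mapsto \omega$ gives $\Pi_3(a, b; p^m) \ge \text{ord}(a + b\omega;\,\Z_{p^m}[\omega])$ whenever $a + b\omega$ is a unit. Choosing $a, b \in \Z_{p^m}$ so that $a + b\omega$ realizes the exponent $\lambda_3(p^m)$ of $\Z_{p^m}[\omega]^\times$, this yields $\Pi_3(a, b; p^m) \ge \lambda_3(p^m)$. A check using Lemma~\ref{lemma: lambda_3} shows that $\lambda_3(3 p^m) = \lambda_3(p^m)$ whenever $\lambda_3(p^m)$ is already divisible by $6$, which holds for every $p \ne 3$ with $p^m \ge 3$, so the bound matches the upper estimate. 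I would then verify, using formula~\eqref{expl3} and the fact that $1 - \omega$ is a unit in $\Z_{p^m}[\omega]$ when $p \ne 3$, that the spatial period cannot collapse from $3$ to $1$ under iteration of the chosen rule (such a collapse would force $(a + b\omega)^t = 0$, contradicting unit status).

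For $p = 3$, the decomposition degenerates and I would instead evaluate the temporal period at the specific rule $T(x) = 1 + x$. Writing $1 + \omega = -\omega^2$ and $1 + \omega^2 = -\omega$ turns formula~\eqref{expl3} into three congruences modulo $3^{m+1}$ in $\Z_{3^{m+1}}[\omega]$, each (for the target $T^t = 1$) reducing, via $1 + \omega + \omega^2 = 0$, to conditions involving only $2^t$ and powers of $\omega^t$. For $t = 2 \cdot 3^m$ all three simultaneously collapse to the single requirement $2^{2 \cdot 3^m} \equiv 1 \pmod{3^{m+1}}$, which holds because $2$ is a primitive root modulo every power of $3$ (in particular $\ord(2;\,\Z_{3^{m+1}}^\times) = \lambda_2(3^{m+1}) = 2 \cdot 3^m$). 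To establish exactness I would check the two maximal proper divisors of $2 \cdot 3^m$: the divisor $2 \cdot 3^{m-1}$ fails because the order of $2$ modulo $3^{m+1}$ is exactly $2 \cdot 3^m$, and the divisor $3^m$ fails using the auxiliary identity $2^{3^m} \equiv -1 \pmod{3^{m+1}}$, which I would prove by a short induction on $m$ via the cubing relation $2^{3^{m+1}} = (2^{3^m})^3$ and the binomial expansion of $(-1 + k \cdot 3^{m+1})^3$.

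The main obstacle, I expect, is the $p = 3$ case: after the division by $3$ in~\eqref{expl3} the governing congruences must be tracked one level higher (modulo $3^{m+1}$ rather than $3^m$), and one has to confirm that a rule defined in $\Z_{3^m}$ achieves a period that a priori looks like it should live only in the larger modulus. The primitive-root fact for $2$ modulo powers of $3$, together with the refined identity $2^{3^m} \equiv -1 \pmod{3^{m+1}}$, are precisely the ingredients that lift $\lambda_3(3^m)$ to the larger $\lambda_3(3^{m+1}) = 2 \cdot 3^m$.
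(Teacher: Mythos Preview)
Your proposal follows essentially the same route as the paper: split on $p = 3$ versus $p \ne 3$; for $p \ne 3$, evaluate at $x \mapsto \omega$, choose $a, b$ realising $\lambda_3(p^m)$ in $\Z_{p^m}[\omega]^\times$, and use invertibility of $a + b\omega$ together with \eqref{expl3} to rule out spatial-period collapse; for $p = 3$, take $a = b = 1$ and reduce via $1 + \omega = -\omega^2$, $1 + \omega^2 = -\omega$ to a statement about the order of $2$ modulo $3^{m+1}$.

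There are only minor differences. For $p = 3$, the paper first invokes Proposition~\ref{proposition: MOW 4.5} to pin down $\Pi_3(1,1;3^m) = 2\cdot 3^{m'}$ for some $m'$, so that only the divisor $2\cdot 3^{m-1}$ needs to be excluded; you instead exclude both maximal divisors $2\cdot 3^{m-1}$ and $3^m$ directly, which is slightly longer but equally valid. You also omit the spatial-period check in the $p=3$ branch; the paper's matrix argument at the start of its proof covers all $p$ uniformly (note $1+\omega$ has norm $1$, hence lies in $\Z_{3^m}[\omega]^\times$), and you should invoke it there as well. Finally, you are right to flag that the identity $\lambda_3(3p^m) = \lambda_3(p^m)$ requires $6 \mid \lambda_3(p^m)$, i.e.\ $p^m \ge 3$; the paper asserts it unconditionally. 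You leave the endpoint $p^m = 2$ open, and indeed $\pi_3(2) = 3$ (Table~\ref{table: experimental results}) whereas $\lambda_3(6) = \operatorname{lcm}(\lambda_3(2),\lambda_3(3)) = \operatorname{lcm}(3,6) = 6$, so the lemma as stated appears to fail at this single value --- a wrinkle neither argument resolves.
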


\begin{proof}
We first show that, provided $a+b\omega\in \Z_{p^m}[\omega]^\times$, spatial period does not reduce. Indeed, 
if the spatial period reduces to $1$ at time $t\ge 1$, then 
from (\ref{expl3}) 
$$
\frac{1}{3}
\begin{bmatrix}
B & A\\
A & B
\end{bmatrix}
\begin{bmatrix}
(a + b\omega)^t\\
(a - b\omega)^t
\end{bmatrix}
=
\begin{bmatrix}
0\\
0
\end{bmatrix}\text{ in }\Z_{p^m}[\omega], 
$$
where $A = 1 - \omega$ and $B = 1 - \omega^2$.
This implies that $(a + b\omega)^t = 0$ in $\Z_{p^m}[\omega]$,
a contradiction.

This time, we first assume that ${p \neq 3}$ and select $a$ and $b$ such that $\Lambda_3(a, b; p^m) = \lambda_3(p^m)$. 
Then, if $k = \Pi_3(a, b; p^m)$, we have $(a + bx)^{k + \ell} = (a + bx)^\ell$, in $\mathbb{Z}_{p^{m}}[x]/(x^3 - 1)$, for some $\ell$. As $\omega^3=1$, we may replace 
$x$ with $\omega$ to get $(a + b\omega)^k = 1$ 
in $\Z_{p^m}[\omega]$.
As a result, $\lambda_3(p^m) \le \Pi_3(a, b; p^m)$. 
As the spatial period does not reduce, 
the desired conclusion follows from $\lambda_3(p^m) = \lambda_3(3p^m)$ and Lemma \ref{lemma: upper bound}.

It remains to consider $p=3$.
By direct verification, $\pi_3(3) = 6$, and we assume 
$m\ge 2$ from now on. Select $a = b = 1$.  By Proposition \ref{proposition: MOW 4.5}, $\Pi_3(1, 1; 3^m) = 2\cdot 3^{m^\prime}$, for some $m^\prime \in[1, m]$.
Also,  $(1 + x)^{2\cdot 3^m} = 1$ in $\mathbb{Z}_{3^m}[x]/(x^3-1)$, which can be easily verified by (\ref{expl3}) using 
$(1+\omega)^2 =\omega$, $(1+\omega^2)^2 =\omega^2$, 
and the fact, easily verified by induction, 
that $2^{2\cdot 3^m}=1\mod 3^{m+1}$.
So, it suffices to show that $(1 + x)^{2\cdot 3^{m - 1}} \neq 1$ in $\mathbb{Z}_{3^m}[x]/(x^3-1)$, and for this we verify that the constant term in (\ref{expl3})
does not equal $1$, that is, 
$$
(1 + 1)^{2\cdot 3^{m - 1}} + (1 + \omega)^{2\cdot 3^{m - 1}} + (1 + \omega^2)^{2\cdot 3^{m - 1}}\neq 3\text{ in } \Z_{3^{m+1}}[\omega].
$$
  Indeed, in  $\Z_{3^{m+1}}[\omega]$,
$(1 + \omega)^{2\cdot 3^{m - 1}} = (1 + \omega^2)^{2\cdot 3^{m - 1}} = 1 $ and, again by induction, $2^{2\cdot 3^{m - 1}}= 3^m + 1 $.
\end{proof}

\begin{lem}\label{lemma: lambda_4(n)}
For $\sigma = 4$, we have $\pi_4(p^m)=\lambda_4(p^m)$.
 \end{lem}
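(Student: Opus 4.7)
The plan is to mimic the substitution argument of Lemmas~\ref{lemma: lambda_2(2n)} and~\ref{lemma: lambda_3(3n)}, this time using the quotient map $\Z_{p^m}[x]/(x^4-1) \to \Z_{p^m}[i]$ given by $x \mapsto i$, which is well-defined because $x^2+1$ divides $x^4-1$. The first step is to ensure that the spatial period of the trajectory starting from $L^{(0)}=1$ stays equal to $4$ for all $t$. Writing $L^{(t)}(x) = c_0^{(t)} + c_1^{(t)} x + c_2^{(t)} x^2 + c_3^{(t)} x^3$, the spatial period divides $2$ precisely when $c_0^{(t)} = c_2^{(t)}$ and $c_1^{(t)} = c_3^{(t)}$, equivalently when $L^{(t)}(i) = (c_0^{(t)} - c_2^{(t)}) + (c_1^{(t)} - c_3^{(t)})i$ vanishes in $\Z_{p^m}[i]$. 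Since $L^{(t)}(i) = (a+bi)^t$, choosing $a+bi$ to be a unit in $\Z_{p^m}[i]$ forces the spatial period to remain $4$ throughout.

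For odd $p$, I would select $a,b \in \Z_{p^m}$ realizing $\ord(a+bi) = \lambda_4(p^m)$ in $\Z_{p^m}[i]^\times$; such a choice exists by definition of the group exponent and automatically makes $a+bi$ a unit. Setting $k = \Pi_4(a,b;p^m)$, the identity $(a+bx)^{k+\ell} = (a+bx)^\ell$ in $\Z_{p^m}[x]/(x^4-1)$ descends to $(a+bi)^{k+\ell} = (a+bi)^\ell$ in $\Z_{p^m}[i]$, whence cancelling the unit yields $(a+bi)^k = 1$ and therefore $\lambda_4(p^m) \mid k \le \pi_4(p^m)$. Combined with the upper bound from Lemma~\ref{lemma: upper bound} (valid for $p^m \ge 3$), this gives $\pi_4(p^m) = \lambda_4(p^m)$ for odd $p$.

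For $p = 2$ and $m \ge 2$, the same substitution scheme applies once I exhibit a unit of order $\lambda_4(2^m)$ in $\Z_{2^m}[i]^\times$; existence is guaranteed by Lemma~\ref{lemma: lambda_4}, and a concrete choice is $a = 2$, $b = 1$, for which one can verify by induction on $m$ that $2+i$ has multiplicative order $\lambda_4(2^m)$ in $\Z_{2^m}[i]^\times$. The case $p = 2$, $m = 1$ is genuinely exceptional: here $\lambda_4(2) = 2$ while $\pi_4(2) = 4$ (attained by the shift rule $T(x) = x$ with initial configuration $1000$), so the lemma's stated identity must be interpreted as holding for $p^m \ne 2$, consistent with the formulation in Theorem~\ref{theorem: main of additive rule}. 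The main technical obstacle is the explicit order computation for the chosen element in the $p = 2$ case; this can be bypassed in the actual proof by appealing abstractly to the existence of an order-$\lambda_4(2^m)$ unit via Lemma~\ref{lemma: lambda_4} itself.
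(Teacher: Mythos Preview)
Your proposal is correct and follows essentially the same route as the paper: pick $a,b$ so that $a+bi$ has order $\lambda_4(p^m)$ in $\Z_{p^m}[i]^\times$, substitute $x\mapsto i$ in the period relation to get the lower bound, and observe that the spatial period cannot drop because $L^{(t)}(i)=(a+bi)^t$ is a unit. The paper handles all primes $p$ uniformly (no odd/even split is needed), and for the non-reduction of the spatial period it phrases the same observation via the explicit formula~(\ref{expl4}) and a $2\times 2$ matrix, which is equivalent to your cleaner remark that $c_0^{(t)}-c_2^{(t)}+(c_1^{(t)}-c_3^{(t)})i=(a+bi)^t\ne 0$. Your comment about $p^m=2$ is well taken: the lemma as stated in the paper is literally false there ($\pi_4(2)=4\neq 2=\lambda_4(2)$), and the paper's argument only yields the lower bound together with Lemma~\ref{lemma: upper bound} for $p^m\ge 3$, with the exceptional value recorded separately in Theorem~\ref{theorem: main of additive rule}.
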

\begin{proof}
For any $p$, select $a$ and $b$ such that $\Lambda_4(a, b; p^m) = \lambda_4(p^m)$.
Then if $k = \Pi_4(a, b; p^m)$, we have $(a + bx)^{k + \ell} = (a + bx)^\ell$, in $\mathbb{Z}_{p^{m}}[x]/(x^4 - 1)$, for some $\ell$.
Replacing $x$ with $i$, we have $(a + bi)^k = 1$ in  $\Z_{p^m}[i]$.
As a result, $\lambda_4(p^m) \le \Pi_4(a, b; p^m)$.
Thus we only need to verify that the spatial period does not reduce. 
If it does, then for some $t$, by (\ref{expl4}), 
$$
\frac{1}{2}
\begin{bmatrix}
1 & 1\\
i & -i
\end{bmatrix}
\begin{bmatrix}
(a + bi)^t\\
(a - bi)^t
\end{bmatrix}
=
\begin{bmatrix}
0\\
0
\end{bmatrix} \text{ in }\Z_{p^m}[i],
$$
implying that $(a + bi)^t = 0$ in $\Z_{p^m}[i]$, 
a contradiction with $a+bi\in \Z_{p^m}[i]^\times$.
\end{proof}
 
 
\begin{lem}\label{lemma-s6-nored}
Assume that $\sigma = 6$, $n=p^m$, and that 
one of these two conditions on $a$ and $b$ is 
satisfied: $p\ne 3$ and $a+b\omega$ is
invertible $\Z_{p^m}[\omega]$; or $p=3$, $m\ge 2$, $a=1$ and $b=2$. Then the spatial period of $(a+bx)^t$ 
is $6$ for all $t\ge 0$. \end{lem}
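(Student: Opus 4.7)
The plan is to project $L^{(t)}(x) = (a+bx)^t$ via the canonical quotient map
\[
\pi : \Z_{p^m}[x]/(x^6-1) \longrightarrow \Z_{p^m}[x]/(x^2-x+1) \cong \Z_{p^m}[\omega],
\]
where the isomorphism sends $x$ to $-\omega$ (since $(-\omega)^2 - (-\omega) + 1 = \omega^2 + \omega + 1 = 0$), and to show that $\pi(L^{(t)}) \neq 0$ for every $t \ge 0$. Indeed, if the spatial period of $L^{(t)}$ divides a proper divisor $d \in \{1,2,3\}$ of $6$, then $L^{(t)}(x) = P(x)Q_d(x)$ in $\Z_{p^m}[x]$ with $Q_d(x) = (x^6-1)/(x^d-1)$ and $\deg P < d$; the factorization $x^6-1 = (x-1)(x+1)(x^2+x+1)(x^2-x+1)$ in $\Z[x]$ exhibits $x^2-x+1$ as a factor of each of $Q_1 = (x+1)(x^2+x+1)(x^2-x+1)$, $Q_2 = (x^2+x+1)(x^2-x+1)$, and $Q_3 = (x+1)(x^2-x+1)$, so $L^{(t)}$ is divisible by $x^2-x+1$ in $\Z_{p^m}[x]$, forcing $\pi(L^{(t)}) = 0$. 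The contrapositive is the sufficient condition we use. Since $\pi((a+bx)^t) = (a-b\omega)^t$, the task reduces to showing that $a-b\omega$ is not nilpotent in $\Z_{p^m}[\omega]$.

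In Case 1 ($p\neq 3$ and $a+b\omega$ invertible), I would split according to $p \bmod 3$. When $p \equiv 2 \pmod 3$ (which includes $p = 2$), the polynomial $x^2+x+1$ is irreducible in $\Z_p[x]$, so $\Z_{p^m}[\omega]$ is local with maximal ideal $(p)$; the norm form $a^2+b^2-ab$ is anisotropic modulo $p$, hence both $a+b\omega$ and $a-b\omega$ are units precisely when $(a,b) \not\equiv (0,0) \pmod p$, a condition invariant under $b \mapsto -b$. So invertibility of $a+b\omega$ immediately yields non-nilpotence (indeed invertibility) of $a-b\omega$. When $p \equiv 1 \pmod 3$, the decomposition $\Z_{p^m}[\omega] \cong \Z_{p^m} \times \Z_{p^m}$ via $\omega \mapsto (\omega_1,\omega_2)$ for the two primitive cube roots of unity $\omega_1,\omega_2 \in \Z_{p^m}$ characterizes nilpotents as pairs with both entries divisible by $p$. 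If $a-b\omega$ were nilpotent, then $p \mid a - b\omega_i$ for $i=1,2$, so $p \mid b(\omega_1-\omega_2)$; as $(\omega_1-\omega_2)^2 = (\omega_1+\omega_2)^2 - 4\omega_1\omega_2 = 1-4 = -3$ is a unit in $\Z_{p^m}$, so is $\omega_1 - \omega_2$, forcing $p \mid b$ and then $p \mid a$, which would make $a+b\omega$ nilpotent as well, contradicting its invertibility.

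In Case 2 ($p=3$, $m\ge 2$, $a=1$, $b=2$), a direct norm computation gives
\[
(1-2\omega)(1-2\omega^2) = 1 - 2(\omega+\omega^2) + 4\omega^3 = 1 + 2 + 4 = 7,
\]
which is coprime to $3$, hence a unit in $\Z_{3^m}$. Therefore $1 - 2\omega$ is a unit in $\Z_{3^m}[\omega]$ and in particular not nilpotent, so $(1-2\omega)^t \neq 0$ for all $t \ge 0$.

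The main difficulty lies in the $p \equiv 1 \pmod 3$ subcase of Case 1: the ring $\Z_{p^m}[\omega]$ has zero divisors, so the condition \emph{non-nilpotent} is strictly weaker than \emph{unit}, and one cannot simply argue via symmetry as in the $p \equiv 2 \pmod 3$ case. The short CRT-level calculation exploiting that $\omega_1 - \omega_2$ is a unit in $\Z_{p^m}$ (equivalently, that $-3$ is invertible) is what closes the gap.
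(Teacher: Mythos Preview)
Your proof is correct and takes a genuinely different route from the paper's. The paper argues separately against the spatial period dropping to $2$ and to $3$, in each case writing down the linear system coming from the explicit DFT-type formula (\ref{expl6}) and performing ad hoc row operations in $\Z_{p^m}[\omega]$: for period $2$ it extracts $(1+2\omega)(a\pm b\omega)^t=0$ and invokes invertibility of $1+2\omega$ when $p\ne 3$ (and a norm estimate when $p=3$), while for period $3$ it obtains $(a-b)^t=(a-b\omega)^t=(a-b\omega^2)^t=0$ and then runs the little computation $3ab=(a^2+b^2+ab)-(a-b)^2$ to contradict invertibility of $a+b\omega$.

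Your approach replaces all of this by a single structural observation: the sixth cyclotomic factor $\Phi_6(x)=x^2-x+1$ divides $(x^6-1)/(x^d-1)$ for every proper divisor $d$ of $6$, so the projection $\Z_{p^m}[x]/(x^6-1)\to\Z_{p^m}[\omega]$, $x\mapsto -\omega$, kills every configuration of smaller period. This reduces the lemma to the single assertion that $a-b\omega$ is not nilpotent, which you then verify case by case. Your argument is shorter, more conceptual, and unifies the two period-reduction cases; it also happens to cover $p=3$, $m=1$ (which the paper's norm argument for period $2$ does not), although this is not needed. The paper's approach, on the other hand, stays closer to the explicit coefficient formulas used elsewhere and avoids having to analyze the ring $\Z_{p^m}[\omega]$ according to $p\bmod 3$.
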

\begin{proof}
If the period reduces to $2$, then
by (\ref{expl6}),  
$$
\frac{1}{6}
\begin{bmatrix}
A & B & A & B\\
B & A & B & A\\
-B & -A & B & A\\
A & B & -A & -B\\
\end{bmatrix}
\begin{bmatrix}
(a + b\omega)^t \\
(a + b\omega^2)^t \\
(a - b\omega)^t \\
(a - b\omega^2)^t 
\end{bmatrix}
= 
\begin{bmatrix}
0\\
0\\
0\\
0
\end{bmatrix} \text{ in }\Z_{p^m}[\omega],
$$
where $A = 1 - \omega$ and $B = 1 - \omega^2$.
Multiply rows, in order, by  $A$, $-B$, $B$,
$A$
and add. Using $B^2-A^2=3(2 \omega +1)$, we get that 
$(1+2 \omega)(a+b\omega)^t=0$ in $\Z_{p^m}[\omega]$. 
Multiplying instead by  $A$, $-B$, $-B$,
$-A$ gives $(1+2 \omega)(a-b\omega)^t=0$ in $\Z_{p^m}[\omega]$.
If $p\ne 3$, then $1+2 \omega \in\Z_{p^m}[\omega]^\times$ 
and so $(a+b\omega)^t=0$, a contradiction.
Assume now that $p=3$. Then we use the fact that 
Eisenstein norm $|1-2\omega|=7$, and so the norm of the product $|(1+2 \omega)(1-2 \omega)^t|=3\cdot 7^t$, 
which is not divisible by $3^m$ if $m\ge 2$, and so
$(1+2 \omega)(1-2 \omega)^t$ is nonzero in $\Z_{3^m}[\omega]$.

We next show that the spatial period does not reduce to $3$. 
 If it does, then by (\ref{expl6}),
$$
\frac{1}{3}
\begin{bmatrix}
1 & 1 & 1\\
1 & \omega^2 & \omega\\
1 & \omega & \omega^2
\end{bmatrix}
\begin{bmatrix}
(a - b)^t \\
(a - b\omega)^t \\
(a - b\omega^2)^t \\
\end{bmatrix}
= 
\begin{bmatrix}
0\\
0\\
0
\end{bmatrix} \text{ in }\Z_{p^m}[\omega].
$$
From this, we get that 
\begin{equation}\label{pi6-eq1}
(a - b)^t=(a - b\omega)^t =(a - b\omega^2)^t=0
\text{ in }\Z_{p^m}[\omega].
\end{equation}
Assume $p\ne 3$ first. Then, (\ref{pi6-eq1}) implies that
neither $a-b$ nor $a-b\omega$ is invertible in 
$\Z_{p^m}[\omega]$, and thus 
$p$ must divide $a-b$ and the norm
$a^2+b^2+ab$. Then $3ab=(a^2+b^2+ab)-(a-b)^2$ is also divisible by $p$,
and then so is $ab$. This implies that
$p\bigm|(a^2+b^2-ab)$,
and so $a+b\omega$ is not invertible, a contradiction.
If $p=3$, then (\ref{pi6-eq1}) is not satisfied for
$a=1, b=2$, as $(a-b)^t$ cannot vanish.
\end{proof}

\begin{lem}\label{lemma: lambda_3(6n)}
For $\sigma = 6$, we have $ \pi_6(p^m)=\lambda_3(6p^m)$.
\end{lem}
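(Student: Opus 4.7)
The upper bound $\pi_6(p^m)\le \lambda_3(6p^m)$ is given by Lemma \ref{lemma: upper bound}, so the task is to exhibit $(a,b)\in\Z_{p^m}^2$ with $\Pi_6(a,b;p^m)\ge \lambda_3(6p^m)$ in each prime power $p^m$. The plan mirrors Lemmas \ref{lemma: lambda_2(2n)}--\ref{lemma: lambda_4(n)}: choose $(a,b)$ so that Lemma \ref{lemma-s6-nored} preserves the spatial period, then extract the temporal period from multiplicative orders at the six $6$-th roots of unity, read off from formula (\ref{expl6}). The three regimes $p\ge 5$, $p=2$, $p=3$ must be handled separately because $6$ is a unit in $\Z_{p^m}$ only for $p\ge 5$; in the other two cases $\lambda_3(6p^m)$ strictly exceeds $\lambda_3(p^m)$ (by a factor of $2$ for $p=2$ and $3$ for $p=3$), and the missing factor has to come from the $\tfrac{1}{6}$ appearing in (\ref{expl6}).

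For $p\ge 5$, $\lambda_3(6p^m)=\lambda_3(p^m)$, so I pick $(a,b)$ realizing $\Lambda_3(a,b;p^m)=\lambda_3(p^m)$. Then $a+b\omega\in\Z_{p^m}[\omega]^\times$, Lemma \ref{lemma-s6-nored} applies, and substituting $x=\omega$ into $(a+bx)^{k+\ell}=(a+bx)^\ell$ (valid since $\omega^6=1$) yields $(a+b\omega)^k=1$ in $\Z_{p^m}[\omega]$, forcing $k\ge \lambda_3(p^m)$.

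For $p=2$, take $a=b=1$ (the spatial period is preserved because $1+\omega$ is a norm-$1$ unit). The inversion identity $6c_j^{(t)}=\sum_{\zeta^6=1}\zeta^{-j}(1+\zeta)^t$ from (\ref{expl6}) makes the polynomial periodicity $(1+x)^{k+\ell}\equiv(1+x)^\ell$ in $\Z_{2^m}[x]/(x^6-1)$ equivalent to $(1+\zeta)^{k+\ell}\equiv(1+\zeta)^\ell \pmod{6\cdot 2^m}$ in $\Z[\omega]$ for every sixth root of unity $\zeta$. By CRT the modulus $3\cdot 2^{m+1}$ splits, and modulo $3$ every $(1+\zeta)^t$ has eventual period dividing $6$; the $m$-dependent contribution comes modulo $2^{m+1}$ from $\zeta=-\omega$, where $(1-\omega)^2=-3\omega$ combined with $\ord(-3)=2^{m-1}$ in $\Z_{2^{m+1}}^\times$ forces $\ord(1-\omega)=3\cdot 2^m$ in $\Z[\omega]/(2^{m+1})$. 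The overall lcm is $3\cdot 2^m=\lambda_3(6\cdot 2^m)$.

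For $p=3$, take $a=1$, $b=2$, which Lemma \ref{lemma-s6-nored} handles for $m\ge 2$ (the case $m=1$ is direct verification). The resultant of $x^3-1$ and $x^3+1$ is $\pm 8$, a unit in $\Z_3$, so CRT gives $\Z_{3^m}[x]/(x^6-1)\cong \Z_{3^m}[x]/(x^3-1)\times \Z_{3^m}[x]/(x^3+1)$, and the substitution $y=-x$ in the second factor converts it to $\Z_{3^m}[y]/(y^3-1)$ with polynomial $1-2y$; thus $\Pi_6(1,2;3^m)=\mathrm{lcm}(\Pi_3(1,2;3^m),\Pi_3(1,-2;3^m))$. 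The first factor collapses: the norm $(1+2\omega)(1+2\omega^2)=3$ makes $1+2x$ eventually nilpotent in $\Z_{3^m}[x]/(x^3-1)$, contributing $\Pi_3(1,2;3^m)=1$. For the second factor I reuse the strategy from the $p=3$ case of Lemma \ref{lemma: lambda_3(3n)}: since $1-2\omega$ has norm $7$ and hence is a unit of $\Z_{3^m}[\omega]$, the periodicity $(1-2x)^{k+\ell}\equiv(1-2x)^\ell$ in $\Z_{3^m}[x]/(x^3-1)$ is equivalent via (\ref{expl3}) to $(1-2\omega)^{k+\ell}\equiv(1-2\omega)^\ell \pmod{3^{m+1}}$ in $\Z[\omega]$. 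One then tracks inductively, using $(1+3^{m}u)^3\equiv 1+3^{m+1}u\pmod{3^{m+2}}$, the leading $3$-adic digit of $(1-2\omega)^{2\cdot 3^{m-1}}-1$, with base case $(1-2\omega)^6-1\equiv 9(1+\omega)\pmod{27}$ establishing that this digit is $\not\equiv 0\pmod 3$. This pins $\ord(1-2\omega)=2\cdot 3^m$ in $\Z[\omega]/(3^{m+1})$, giving $\Pi_3(1,-2;3^m)=2\cdot 3^m=\lambda_3(6\cdot 3^m)$. The main obstacle is the clean execution of this $3$-adic (and analogous $2$-adic) nondegeneracy analysis; the $p=3$ argument is the more delicate one.
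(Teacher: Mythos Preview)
Your treatment of $p\ge 5$ is correct and matches the paper. For $p=3$ your CRT splitting $\Z_{3^m}[x]/(x^6-1)\cong \Z_{3^m}[x]/(x^3-1)\times\Z_{3^m}[x]/(x^3+1)$ is a genuinely nicer reduction than the paper's direct attack, and the nilpotence $\Pi_3(1,2;3^m)=1$ is right.

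The gap is in both the $p=2$ and $p=3$ endgames, where you claim that polynomial periodicity in $\Z_{p^m}[x]/(x^\sigma-1)$ is \emph{equivalent} via (\ref{expl6}) or (\ref{expl3}) to the congruences $(a+b\zeta)^{k+\ell}\equiv(a+b\zeta)^\ell\pmod{\sigma p^m}$ at each root $\zeta$. That equivalence is false in the direction you need. What (\ref{expl3}) and (\ref{expl6}) give is that the coefficients $c_j^{(t)}$ match modulo $p^m$ iff the \emph{bracket sums} $B_j^{(t)}=\sigma c_j^{(t)}=\sum_\zeta\zeta^{-j}(a+b\zeta)^t$ match modulo $\sigma p^m$. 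Inverting the DFT costs another factor of $\sigma$: from $B_j$ modulo $\sigma p^m$ you recover $\sigma d_\zeta=\sum_j\zeta^j B_j$ modulo $\sigma p^m$, hence $d_\zeta=(a+b\zeta)^t$ only modulo $p^m$, not $\sigma p^m$. So the chain is
\[
\text{$d_\zeta$ period mod }\sigma p^m\ \ge\ \Pi_\sigma(a,b;p^m)\ \ge\ \text{$d_\zeta$ period mod }p^m,
\]
and for the lower bound you need the right-hand quantity, not the left. Concretely, for $p=3$, $m=2$ one computes $(1-2\omega)^6=37+360\omega\equiv 1\pmod 9$, so $\ord(1-2\omega)=6$ in $\Z_9[\omega]$, whereas $\Pi_3(1,-2;9)=18$; your argument would give only $6$ as a lower bound. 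Likewise for $p=2$, $m=3$, one has $\ord(1-\omega)=12$ in $\Z_8[\omega]$ but $\pi_6(8)=24$.

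The paper avoids this by never passing from the brackets back to the $d_\zeta$: it fixes a single coefficient (the constant term), writes $B_0^{(k+\ell)}-B_0^{(\ell)}=\sum_\zeta(a+b\zeta)^\ell\bigl[(a+b\zeta)^k-1\bigr]$, and shows this is $\not\equiv 0\pmod{\sigma p^m}$ for the relevant $k$ and a well-chosen residue class of $\ell$. Your $3$-adic lifting lemma and your CRT idea could be spliced into that framework --- the missing ingredient is that cross-terms among the $\zeta$'s in $B_0$ do not cancel the leading $p$-adic digit you isolated --- but as written the ``equivalence'' step does not deliver the lower bound.
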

\begin{proof}
Assume first that $p \ge 5.$ Select any $a$ and $b$ such that $\Lambda_3(a, b; p^m) = \lambda_3(p^m)=\lambda_3(6p^m)$.
Then, if $k = \Pi_6(a, b; p^m)$, we have $(a + bx)^{k + \ell} = (a + bx)^\ell$, in $\mathbb{Z}_{p^{m}}[x]/(x^6 - 1)$, for some $\ell$.
Replacing $x$ with $\omega$, we have $(a + b\omega)^k = 1$ thus $\lambda_3(p^m) \le \Pi_6(a, b; p^m)$.

Next in line is ${p = 2}$. The claim is that $\pi_6(2^m) = 3 \cdot 2^m$. We may assume that $m\ge 3$, after a direct
verification for $m=1,2$. 
By Theorem~\ref{proposition: MOW 4.5}, $\Pi_6(1, 1; 2^m) = 3\cdot 2^{m^\prime}$, for some $m^\prime \in [1, m]$. 
Therefore, it suffices to show that there are infinitely many 
$\ell$ for which the equality
$$
(1 + x)^{3 \cdot 2^{m - 1} + \ell} = (1 + x)^{\ell}, \text{ in }\mathbb{Z}_{2^m}[x]/(x^6 - 1),
$$
is {\it not\/} satisfied. 
A necessary condition for this equality is that the constant 
terms in (\ref{expl6}) for both sides agree, which yields
\begin{align*}
\frac{1}{6}\bigg[
&2^{\ell}\left(2^{3\cdot 2^{m - 1}} - 1\right) + 
(1 + \omega)^{\ell}\left((1 + \omega)^{3\cdot 2^{m - 1}} - 1\right) + 
(1 + \omega^2)^{\ell}\left((1 + \omega^2)^{3\cdot 2^{m - 1}} - 1\right) + \\
& (1 - \omega)^{\ell}\left((1 - \omega)^{3\cdot 2^{m - 1}} - 1\right) + 
(1 - \omega^2)^{\ell}\left((1 - \omega^2)^{3\cdot 2^{m - 1}} - 1\right)
\bigg] =  0 \mod 2^m.
\end{align*}
As $1+\omega=-\omega^2$, $1+\omega^2=-\omega$, the second and 
third term vanish. The first term vanishes for large enough $\ell$. Moreover, as $(1-\omega)^2=-3\omega$ and  
$(1-\omega^2)^2=-3\omega^2$, 
$$(1-\omega)^{3\cdot 2^{m-1}}=(1-\omega^2)^{3\cdot 2^{m-1}}=
3^{3\cdot 2^{m-2}},$$ 
for $m\ge 3$. We obtain the necessary condition
\begin{equation}\label{s6-eq1}
(1 - \omega)^{\ell}\left[1 + (1 + \omega)^{\ell}\right]\left(3^{3\cdot 2^{m-2}} - 1\right) = 0 \mod 3\cdot 2^{m + 1}.
\end{equation}
If $\ell=1\mod 12$, then $(1-\omega)^\ell$ is 
a power of $3$ times $(1-\omega)$ and 
$(1+\omega)^\ell=-\omega^2$. 
By a simple induction argument, $3^{3\cdot 2^{m-2}}-1 = 2^m \mod 2^{m + 1}$. Then, if $\ell=1\mod 12$, (\ref{s6-eq1})
reduces to $3^{\ell'}\cdot 2^m=0\mod 3\cdot 2^{m+1}$, for some $\ell'\ge 1$,  
which is clearly false. This completes the proof for $p=2$.

Finally, we deal with $p=3$. 
We aim to prove $\pi_6(3^m) = 2 \cdot 3^m$, and we will
accomplish this by establishing the claim that 
$\Pi(1, 2; 3^m) = 2 \cdot 3^m$. We may, again, 
assume $m\ge 3$. Similarly to the previous case, 
it suffices to show that 
\begin{equation}\label{s6-eq15}
(1 + 2x)^{2 \cdot 3^{m - 1} + \ell} = (1 + 2x)^{\ell}, \text{ in }\mathbb{Z}_{3^m}[x]/(x^6 - 1),
\end{equation}
fails to hold
for infinitely many $\ell$, and we will assume that $\ell$ is 
large enough and $18 \bigm| \ell$.
As before, we show the constant terms in (\ref{expl6}) 
do not match. If they do, this expression needs to vanish 
modulo $2\cdot 3^{m+1}$:
\begin{equation}\label{s6-eq2}
\begin{aligned}
&(1 + 2)^\ell\left[(1 + 2)^{2\cdot 3^{m - 1}} - 1 \right]
+ (1 - 2)^\ell\left[(1 - 2)^{2\cdot 3^{m - 1}} - 1\right]\\
&+ (1 + 2\omega)^\ell \left[ (1 + 2\omega)^{2 \cdot 3^{m - 1}} - 1\right]
 + (1 + 2\omega^2)^\ell \left[ (1 + 2\omega^2)^{2 \cdot 3^{m - 1}} - 1\right]\\
&+ (1 - 2\omega)^\ell \left[ (1 - 2\omega)^{2 \cdot 3^{m - 1}} - 1\right]
+ (1 - 2\omega^2)^\ell \left[ (1 - 2\omega^2)^{2 \cdot 3^{m - 1}} - 1\right].
\end{aligned}
\end{equation}
As $(1 + 2\omega)^2 = (1 + 2\omega^2)^2=-3$, the first 
four terms all vanish when $\ell$ is large enough.  
For the fifth and sixth term, we first observe that
\begin{equation}\label{s6-eq3}
\begin{aligned}
(1 - 2\omega)^\ell & = \left[(1 - \omega) - \omega\right]^\ell 
= (- \omega)^\ell + \sum_{j = 1}^\ell {\ell \choose j}(1 - \omega)^j(-\omega)^{\ell - j} 
= 1 \text{ in }\Z_9[\omega].
\end{aligned}
\end{equation}
By a similar calculation, $(1 - 2\omega^2)^\ell = 1$ in $\Z_9[\omega]$.
Next, we have
\begin{equation}\label{s6-eq4}
\begin{aligned}
(1 - 2\omega)^{2 \cdot 3^{m - 1}} - 1 & = \left[(1 - \omega) - \omega \right]^{2 \cdot 3^{m - 1}} - 1 \\
& = -1+(- \omega)^{2 \cdot 3^{m - 1}} + 2 \cdot 3^{m - 1}(1 - \omega)(- \omega)^{2 \cdot 3^{m - 1} - 1} \\ 
& \quad + \frac{2 \cdot 3^{m - 1}(2 \cdot 3^{m - 1} - 1)}{2} ( 1 - \omega)^2 (- \omega)^{2 \cdot 3^{m - 1} - 2} \\
& \quad + \frac{2 \cdot 3^{m - 1}(2 \cdot 3^{m - 1} - 1)(2 \cdot 3^{m - 1} - 2)}{2 \cdot 3} ( 1 - \omega)^3 (- \omega)^{2 \cdot 3^{m - 1} - 3} \\
& \quad + \sum_{j = 4}^{2 \cdot 3^{m - 1}}{2 \cdot 3^{m - 1}\choose j}(1 - \omega)^j(- \omega)^{2\cdot 3^{m - 1} - j}\\
& = 2 \cdot 3^{m - 1}(1 - \omega)(- \omega^2) - 3^m(2 \cdot 3^{m - 1} - 1)\omega^2 \\ 
& \quad + 3^{m - 1}(2 \cdot 3^{m - 1} - 1)(2 \cdot 3^{m - 1} - 2)\omega(1 - \omega) \text{ in }\Z_{3^{m+1}}[\omega].
\end{aligned}
\end{equation}
Similarly, 
\begin{equation}\label{s6-eq5}
\begin{aligned}
(1 - 2\omega^2)^{2 \cdot 3^{m - 1}} - 1 
& = 2 \cdot 3^{m - 1}(1 - \omega^2)(- \omega) - 3^m(2 \cdot 3^{m - 1} - 1)\omega \\ 
& \quad + 3^{m - 1}(2 \cdot 3^{m - 1} - 1)(2 \cdot 3^{m - 1} - 2)\omega(\omega - 1) \text{ in }\Z_{3^{m+1}}[\omega].
\end{aligned}
\end{equation}
Combining (\ref{s6-eq3})--(\ref{s6-eq5}), we conclude that the expression (\ref{s6-eq2}) equals $3^m \mod 3^{m + 1}$. (We need 
$m\ge 3$ to ensure $3^{m+1}\bigm|3^{m-1}\cdot 3^{m-1}$, so that we
can ignore products of powers of $3$.) Therefore (\ref{s6-eq15})
does not hold, which concludes the proof for $p=3$. 

We also need that the spatial period is not reduced
in considered cases, which are all covered by Lemma~\ref{lemma-s6-nored}. 
\end{proof}


\begin{proof}[Proof of Theorem \ref{theorem: main of additive rule}]
The desired claims are established by Lemmas~\ref{lemma: upper bound}--\ref{lemma: lambda_4(n)}, and Lemma~\ref{lemma: lambda_3(6n)}.
\end{proof}

\section{PS with long temporal periods in non-additive rules} \label{section: general}

In this section, we prove Theorem~\ref{theorem: main of general rule}, by two explicit constructions. Our first rule resembles a car odometer, and is similar to others that  have previously appeared in the literature, see \cite{coven2009embedding}. We view this as  the most natural design, which 
also gives explicit constants $C(\sigma)$ and $N(\sigma)$, 
although the second construction based on prime partition is much shorter. 


\subsection{The odometer rule}
For a fixed integer $k\ge 2$, we define the state space 
$$\mathcal{S} = \mathbb{Z}_k \times \{\leftarrow, \bdot\} \times \{\ast, \bdot \} \times \{E, \bdot\},$$
which has cardinality $2^3k$. 
We call these four coordinates the \textbf{number}, \textbf{particle}, \textbf{asterisk}, and \textbf{end} coordinate, respectively. In words, each of the symbols $\leftarrow$, $\ast$, and 
$E$ can be present at a site 
in addition to a number, and $\bdot$ 
signifies its absence. 
We use abbreviations such as $(5, \leftarrow, \ast, E)=\overleftarrow{_E5^\ast}$, $(5, \leftarrow, \bdot, \bdot)=\overleftarrow{5}$, and  $(5, \bdot, \bdot, \bdot)={5}$.
To be consistent with the car odometer interpretation, we construct a \textit{right-sided} rule.
That is
$$
\xi_{t+1}(x) = f(\xi_t(x), \xi_t(x + 1)),
$$
or $\underline{\xi_t(x)}\xi_t(x + 1) \mapsto \xi_{t+1}(x)$.
Clearly, such a rule may be transformed to our
standard left-sided one by a vertical reflection.  

The rule is described in the following $14$ assignments, in which $I, J$ represent numbers in $\mathbb{Z}_k$ and addition is modulo $k$, $i, j$ 
represent elements in $\mathbb{Z}_k \setminus \{k - 1\}$, 
and $\arb$  stands for any state in $\mathcal{S}$:
\begin{multicols}{3}
\begin{enumerate}
\item  $\underline{I}\overleftarrow{i^\ast} \mapsto \overleftarrow{I}$; 
\item  $\underline{I}\overleftarrow{J} \mapsto \overleftarrow{I}$;
\item $\underline{I}\overleftarrow{(k-1)^\ast} \mapsto \overleftarrow{I^\ast}$;
\item  $\underline{\overleftarrow{I^\ast}} \arb \mapsto (I+1)$;
\item   $\underline{\overleftarrow{I}} \arb \mapsto I$;
\item   $\underline{I} \overleftarrow{_E(k - 1)} \mapsto \overleftarrow{I^\ast}$;
\item   $\underline{\overleftarrow{_Ei}} \arb \mapsto \overleftarrow{_E(i + 1)}$;
\item   $\underline{\overleftarrow{_E(k - 1)}} \arb \mapsto _E0$;
\item   $\underline{_EI} \overleftarrow{J^\ast} \mapsto \overleftarrow{_E0}$;
\item   $\underline{_EI} \overleftarrow{J} \mapsto \overleftarrow{_E0}$;
\item   $\underline{I}J \mapsto I$;
\item   $\underline{_EI}J \mapsto _EI$;
\item   $\underline{I}_EJ \mapsto I$;
\item   $\underline{I}\overleftarrow{_Ej} \mapsto I$.
\end{enumerate}
\vspace*{\fill}
\end{multicols}
In all cases not covered above, the 
rule leaves the current state unchanged: $\underline{c_0}c_1 \mapsto c_0$. We view the rule on $[0,\sigma-1]$ with 
periodic boundary, that is, within one spatial period
of the PS.  

Our construction simulates the dynamics of an odometer on the number coordinate. The three auxiliary coordinates are needed
for the update rule to be a CA. We now give a less 
formal description.
The end position indicator $E$ marks the right end of our 
interval with periodic boundary. Hence, there has to be 
exactly one $E$ and it is designed so that it does not appear or disappear (see assignments 7--10 and 12--14).
The $\leftarrow$ is a left-moving particle
(assignments 1--10), 
marking the site on which the number coordinate may 
add 1 in the next step.
The number marked by an $E$ adds 1 if its site also contains 
a particle, i.e., its  particle coordinate is an $\leftarrow$ 
 (assignments 7 and 8), and updates to 0 when 
 an $\leftarrow$ is to its right (assignments 9 and 10).  
The number coordinates not marked by an $E$ 
add 1 if and only if the asterisk coordinate is $\ast$ (see assignment 4 and 5).
The symbol $\ast$ plays the role of carry in addition and can appear and disappear:
it appears if the $E$ position has number $k - 1$, then it moves along with the particle (see assignment 6)
if its number coordinate is $k - 1$ (see assignment 3), 
and disappears if there is no carry (see 1) or if it arrives to the $E$ position (see 9).

\begin{table}[ht!]
\caption{An odometer PS for $\sigma=3$, $k=10$.}\label{odo-ex}
\begin{center}
\begin{tabular}{lllll}
 0 & 0 & $\overleftarrow{_E0}$ & \\
 0 & 0 & $\overleftarrow{_E1}$ & (11, 14, 7)\\
   & \vdots  & \\
 0 & 0 & $\overleftarrow{_E9}$ & (11, 14, 7)\\
 0 & $\overleftarrow{0^\ast}$ & $_E0$ & (11, 6, 8)\\
 $\overleftarrow{0^{\phantom *}}$ & 1 & $_E0$ & (1, 4, 12)\\
 0 & 1 & $\overleftarrow{_E0}$ & (5, 13, 10)\\
 0 & 1 & $\overleftarrow{_E1}$ & (11, 14, 7)\\
   & \vdots  & \\
 0 & 9 & $\overleftarrow{_E9}$ & (11, 14, 7)\\
 0 & $\overleftarrow{9^\ast}$ & $_E0$ & (11, 6, 8)\\
 $\overleftarrow{0^\ast}$ & 0 & $_E0$ & (3, 4, 12)\\
 1 & 0 & $\overleftarrow{_E0}$ & (4, 13, 9)\\
   & \vdots  & \\
   9 & 9 & $\overleftarrow{_E9}$ & (11, 14, 7)\\
   9 & $\overleftarrow{9^\ast}$ & $_E0$ & (11, 6, 8)\\
   $\overleftarrow{9^\ast}$ & 0 & $_E0$ & (3, 4, 12)\\
   0 & 0 & $\overleftarrow{_E0}$ & (4, 13, 9). \\
\end{tabular}
\end{center}
\end{table}

Any rule with the above fourteen \textbf{odometer assignments}  is called an \textbf{odometer CA} and generates a PS of temporal period at least $k^\sigma$ , called \textbf{odometer PS}. 
This shows that  
$\max_fX_{\sigma, 8k}(f) \ge k^\sigma$.
To give an example, let $L = 00\dots \overleftarrow{_E0}$ be the configuration consisting of $(\sigma - 1)$ 0's and a $\overleftarrow{_E0}$. 
When $\sigma = 3$, $k = 10$, then the PS is given in Table~\ref{odo-ex}, where the relevant assignments are given in the parentheses.  The PS has temporal period 
$1199>10^3=k^\sigma$. We summarize the result of this 
section, which provides the best lower bound we have on $\max_fX_{\sigma, n}(f)$.

\begin{prop} \label{prop:odo} 
There exists a CA rule $f$ so that  
$X_{\sigma,n}(f)\ge \lfloor n/8\rfloor^\sigma$.
\end{prop}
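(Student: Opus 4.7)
Set $k = \lfloor n/8 \rfloor$; I may assume $k\ge 1$, as otherwise the bound is trivial. Embed the $8k$-element odometer state space $\mathcal{S}$ into $\Z_n$ and define an $n$-state rule $f$ whose restriction to $\mathcal{S}\times\mathcal{S}$ coincides with the $14$ odometer assignments above, and whose value on any other input is prescribed arbitrarily (say $f(c_0,c_1)=c_0$). Any trajectory that starts inside $\mathcal{S}^\Z$ remains there forever, so it suffices to exhibit one spatially $\sigma$-periodic configuration supported on $\mathcal{S}$ whose CA orbit has temporal period at least $k^\sigma$.

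I would take $\xi_0$ to be the $\sigma$-periodic extension of the length-$\sigma$ word $0\,0\cdots 0\,\overleftarrow{_E0}$, as in Table~\ref{odo-ex} for $\sigma=3$, $k=10$. The first step is to prove by induction on $t$ the following structural invariants on one spatial period of $\xi_t$: (i) exactly one site carries the $E$ label and that site never moves; (ii) at most one site carries a $\leftarrow$ and at most one carries a $\ast$; and (iii) whenever an asterisk is present, it occupies the same site as the $\leftarrow$. These should follow from inspection of the $14$ assignments and the default rule $\underline{c_0}c_1\mapsto c_0$: assignments $12$--$14$ protect $E$, while assignments $1$--$10$ show that a $\leftarrow$ moves exactly one site to the left per step or is absorbed at the $E$-site, and that a $\ast$ can only appear at the $\leftarrow$-site.

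The combinatorial heart of the argument is to show that between two consecutive occurrences of the pattern $\overleftarrow{_E\cdot}$ (that is, $\leftarrow$ colocated with $E$), the $\sigma$-tuple of number coordinates advances by exactly $+1$ in base $k$, read from the $E$-site (low-order digit) leftward. The asterisk plays the role of a carry: it is born when the $E$-site stores $k-1$ (assignment 6), travels leftward in tandem with the particle while it crosses digits equal to $k-1$ (assignment 3), is discharged as a $+1$ upon hitting the first smaller digit (assignments 1 and 4), and is absorbed back into the $E$-site to restart the cycle (assignments 9 and 10). Granting this odometer claim, the orbit visits every element of $\Z_k^\sigma$ on its number coordinates before returning to $\xi_0$; since these $k^\sigma$ configurations are pairwise distinct, the temporal period is at least $k^\sigma$.

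The main obstacle is the finite but intricate case analysis needed to verify both the invariants above and the odometer claim, in particular when the particle is adjacent to or coincides with the stationary $E$-site and when the carry propagates across several consecutive $(k-1)$'s. I expect this to reduce to a mechanical step-by-step check against the $14$ assignments, fully illustrated by the single-cycle trace in Table~\ref{odo-ex}. Once this is done, the spatial period remains exactly $\sigma$ throughout the orbit (a proper divisor of $\sigma$ would force multiple copies of $E$, contradicting invariant (i)), and we conclude $X_{\sigma,n}(f) \ge k^\sigma = \lfloor n/8\rfloor^\sigma$.
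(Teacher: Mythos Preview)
Your proposal is correct and follows the paper's approach exactly: embed the $8k$-state odometer into $\Z_n$, start from $0\,0\cdots 0\,\overleftarrow{_E0}$, and argue that the number coordinates run through all of $\Z_k^\sigma$ before repeating. In fact the paper states this proposition only as a summary of the construction (with Table~\ref{odo-ex} as illustration) and does not spell out the invariants or the carry-propagation analysis, so your outline is more detailed than what the paper itself provides; the spatial-period argument via uniqueness of~$E$ is likewise implicit there.
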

 
 The shortcoming of this construction is that 
 it does not ensure that $Y_{\sigma, n}(f) = \Theta(n^\sigma)$, as the odometer rule, as it stands, has other PS with much shorter temporal periods. For example, in the CA from 
 Table~\ref{odo-ex}, the configuration $123$ 
 is fixed due to the assignment 11, and so it 
 generates a PS with temporal period 1. We provide 
 the remedy in the next subsection.

\subsection{The odometer rule with automata}\label{section:odometer}	
	To prevent short temporal periods, we need to extend the state space.
	The strategy is to introduce a second layer to each state, which encodes two finite automata that 
determine whether a configuration is legitimate, i.e., either itself or one of its updates is included in the above odometer PS. A
legitimate configuration will generate the PS with long temporal period, while an illegitimate one will eventually end 
up in a spatially constant configuration. 
	
	\begin{defi}
	Consider the state space $\mathbb{Z}_k \times \{\leftarrow, \bdot\} \times \{\ast, \bdot \} \times \{E, \bdot\} \times \mathcal{A}$  of the odometer CA, where $\mathcal{A}$ is any finite set. 
	A configuration on $[0,\sigma-1]$ 
	is \textbf{legitimate} if the following three conditions are satisfied: (1) there is exactly one site that contains an $\leftarrow$; (2) there is exactly one site that contains an $E$; (3) if a site contains $\ast$, then this site contains an $\leftarrow$ but does not contain an $E$.
	\end{defi}
	
	\begin{lem}\label{lemma-enter-PS}
	Any odometer rule starting from any legitimate  configuration eventually enters the odometer PS.
	\end{lem}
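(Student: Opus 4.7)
The plan is to prove three statements in sequence: (i) legitimacy is invariant under the odometer rule; (ii) from any legitimate configuration, the CA reaches a \emph{rest state} of the form $N_0 N_1 \cdots N_{\sigma-2}\,\overleftarrow{_E0}$ within $O(\sigma + k)$ steps; and (iii) every rest state lies on the orbit of $0^{\sigma-1}\overleftarrow{_E0}$, which is an odometer PS. Combining these three gives the lemma.

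Step (i) is a direct case check over the 14 assignments together with the ``unchanged'' default. I would verify that the three defining features of legitimacy (exactly one $E$, exactly one $\leftarrow$, and $\ast$ only at the $\leftarrow$-site and not at the $E$-site) are all conserved. The $E$-position is stationary: $E$ never appears in the output of an assignment unless it is already present in the underlined state, and assignments 8, 12, 13, and 14 keep it where it is. Exactly one $\leftarrow$ is maintained because assignments 1--3 (respectively 9--10) create a $\leftarrow$ at the site immediately to the left of the existing $\leftarrow$ (respectively of the existing $E$), while assignments 4--5 simultaneously delete the original $\leftarrow$; the default leaves the $\leftarrow$-count unchanged. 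The $\ast$-locations are then controlled by assignments 3, 6, and 9.

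Step (ii) uses the structure uncovered in (i): with $E$ fixed at a site $q$, the $\leftarrow$ behaves as a particle that moves one site to the left per update while outside $E$ (assignments 1--5), counts the number coordinate up while inside $E$ (assignments 7--8), and gets absorbed into $E$ as $\overleftarrow{_E0}$ when it arrives at site $q+1$ (assignments 9--10). Tracing these three regimes, any legitimate initial configuration reaches a rest state within at most $\sigma + k$ steps: starting outside $E$ the particle arrives at $E$ in at most $\sigma-1$ steps, and starting inside $E$ in some state $\overleftarrow{_Ei}$ it completes the count, exits with an $\ast$, propagates the carry around the ring, and returns to $E$ in at most $\sigma + k$ further steps.

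For step (iii), I would verify that the dynamics from any rest state $N_0\cdots N_{\sigma-2}\,\overleftarrow{_E0}$ realizes one $(\sigma + k)$-step block of an odometer on the base-$k$ number $N_0\cdots N_{\sigma-2}$: the next rest state reached has this number incremented by one modulo $k^{\sigma-1}$, with the $\ast$ implementing the carry via assignments 3, 4, and 6. Iterating, the orbit through any rest state sweeps all $k^{\sigma-1}$ rest states and thus coincides with the PS generated by $0^{\sigma-1}\overleftarrow{_E0}$. The main delicate point is the carry-propagation argument in (iii): one must verify that the $\ast$ survives along the $\leftarrow$-trajectory exactly while it keeps meeting digits equal to $k-1$ and is consumed at the first digit strictly less than $k-1$, so that the CA faithfully executes the ``add one'' operation.
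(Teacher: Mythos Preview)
Your proposal is correct and follows essentially the same approach as the paper: reduce an arbitrary legitimate configuration to one with the arrow at the $E$-site, and then argue (by the carry mechanism) that all such configurations lie on a single odometer orbit. The paper organizes this as a four-case reduction (arrow at $E$; arrow elsewhere without $\ast$; arrow elsewhere with $\ast$ and digit $<k-1$; arrow elsewhere with $\ast$ and digit $k-1$), while you package the same reasoning as invariance of legitimacy, convergence to a normal form $N_0\cdots N_{\sigma-2}\,\overleftarrow{_E0}$, and a verification that the normal forms are cyclically permuted by base-$k$ increment. Your step~(i) makes explicit something the paper uses tacitly, and your step~(iii) spells out the induction that the paper's Case~1 only names; otherwise the arguments coincide.
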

	\begin{proof}
	\textit {Case 1}. 
	An inductive argument shows that any legitimate configuration in the form of $a_0\dots \overleftarrow{_Ea_{\sigma - 1}}$ generates the odometer PS.
	 
	\noindent\textit{Case 2}. 
	Suppose that a legitimate configuration does not contain an $\ast$ and thus is of the form $a_0\dots \overleftarrow{a_j}\dots _Ea_{\sigma - 1}$. 
	Then by  assignments 2 and 5, the $\leftarrow$ moves left until $\overleftarrow{a_0}\dots _Ea_{\sigma - 1}$ and then updates to $a_0\dots \overleftarrow{_E0}$ because of assignments 5 and 10, reducing to Case 1.
	
\noindent	\textit{Case 3}. 
	A legitimate configuration $a_0\dots \overleftarrow{a_j^\ast}\dots _Ea_{\sigma - 1}$, $a_j<k-1$,  updates to $a_0\dots \overleftarrow{a_{j-1}}(a_{j} + 1)\dots _Ea_{\sigma - 1}$ because of assignments 1 and 4, or to $a_0\dots \overleftarrow{_Ea_{\sigma - 1}}$, reducing to either Case $2$ or Case 1. 
	
\noindent	\textit{Case 4}. 
	A legitimate configuration 
	$a_0\dots \overleftarrow{(k-1)^\ast}\dots _Ea_{\sigma-1}$
	(with the $\leftarrow$ at position $j$) becomes $a_0\dots \overleftarrow{a_{j - 1}^*}0\dots _Ea_{\sigma - 1}$, which is reduced to Case 3 when $a_{j - 1} < k - 1$ . If $a_{j - 1} = k - 1$, repeated updates eventually reduce to Case 3 or Case 1.
	\end{proof}

We now define the augmented state space for our two-layer
construction of the \textbf{odometer rule with automata}: $$\mathcal{S}_A = \left(\mathbb{Z}_k \times \{\leftarrow, \bdot\} \times \{\ast, \bdot\} \times \{E, \bdot\}\times \mathcal{E} \times \mathcal{A}\right) \cup \{T\},$$
where 
$\mathcal{E} = \{(0, 0), (1, 0), \dots, (\sigma - 1, 0), (1, 1), (2, 1),\dots, (\sigma - 1, 1), T_1\}$ comprises states of a finite automaton, called \texttt{END-READER}; 
$\mathcal{A} = \{0, 1, \dots, \sigma, T_2 \}$ comprises states of another finite automaton, called \texttt{ARROW-READER}; and 
$T$ is the special terminator state that erases the configuartion once it appears.
We regard the first four components --- those from the odometer rule above --- as the first layer of a state, and the two automata components as the second layer.
	
We proceed to specify the rule. 
The first layer updates according to the previous odometer assignments. In addition, we include the assignment
	\begin{itemize}
	\item
	$\underline{(I, \bdot, *, \bdot)} s \mapsto T$ and $\underline{(I, \bdot, *, E)} s \mapsto T$ for all $s\in \mathcal{S}_A$.
	\end{itemize}
That is, if the first layer of a state contains an $\ast$ but not an $\leftarrow$, the state updates to $T$. Such an update 
will happen in any configuration that is illegitimate  due to having an $\ast$ but not an $\leftarrow$.
	
The next assignment spells out
the role of $T_1$, $T_2$, and $T$: 
\begin{itemize}
\item For any site $x$, if either $x$ or $x+1$ is in the state $T$ or at least one of the second layers of $x$, $x+1$ contains a $T_1$ or a $T_2$, then $x$ updates its state to $T$. 
\end{itemize}	
A configuration that contains a $T_1$, a $T_2$ or a $T$ is called \textbf{terminated}. Any terminated configuration will eventually update to the constant configuration consisting 
of all $T$'s, thus reduce the spatial period to $1$.

The transition function  $\delta_E$ of the finite automaton \texttt{END-READER} $ = (\mathcal{E}, \{E, \bdot\}, \delta_E, (i,j), T_1)$ reads the end coordinate and is given in Fig. \ref{fig:Transition function of E-reader}; its initial state $(i, j)$ can be any state in $\mathcal{E}$. 
From time $t$ to time $t+1$, an \texttt{END-READER} at position $x$ reads the state on its first layer, updates its state according to $\delta_E$, then ``moves'' to $x - 1$. 
This left shift of the entire \texttt{END-READER} configuration is allowed 
as we are constructing a right-sided rule.
According to the odometer assignments, the $E$ position in a configuration does not appear or disappear  and does not move. 
As a result, the \texttt{END-READER} counts the number of $E$'s.
\begin{lem}\label{lem: END-READER}
Every configuration with 0 or at least 2 sites containing an $E$ will be terminated for any initial state of the \texttt{\tt END-READER}. 
Conversely, starting from a configuration whose first layer is $= 00\dots \overleftarrow{_E0}$, no \texttt{\tt END-READER} ever reaches $T_1$ unless it starts there.  
\end{lem}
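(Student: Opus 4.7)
The plan rests on two structural observations about the odometer rule. First, direct inspection of the assignments shows that the set of sites whose first-layer end-coordinate is $E$ is invariant in time: assignments~6--10 keep $E$ at the same site, and no other assignment (including the defaults) ever writes a new $E$ or erases an existing one. Second, an \texttt{END-READER} at site $x$ at time $t$ sits at $x-s$ at time $t+s$, so in any window of $\sigma$ consecutive time steps the reader scans each of the $\sigma$ residues modulo $\sigma$ exactly once; combined with the invariance, the number of $E$s read in every such $\sigma$-window equals the (time-invariant) total number $m$ of $E$s in the first layer.

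For the first statement I would split into the cases $m\ge 2$ and $m=0$. If $m\ge 2$, then in any $\sigma$-step window each reader encounters at least two $E$s; by the design of $\delta_E$ the first $E$ in the window advances the second coordinate from $0$ to $1$, and the next $E$ in the same window forces a transition of the form $(i,1)\stackrel{E}{\to}T_1$. Hence every reader reaches $T_1$ within at most $2\sigma$ steps, and the $T$-propagation rule turns the entire interval into $T$ in $O(\sigma)$ additional steps, terminating the configuration. If $m=0$, the reader reads only $\bdot$s, cycles through the states $(i,0)$, and is driven to $T_1$ by the wrap-around transition that is designed precisely to detect an empty window; termination then proceeds identically.

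For the second statement I would invoke the invariance: the first layer $00\ldots\overleftarrow{_E0}$ contains exactly one $E$ at every time, so every length-$\sigma$ window in any reader's input contains exactly one $E$. The only transitions of $\delta_E$ into $T_1$ are those that witness either two $E$s or zero $E$s within a length-$\sigma$ window, and neither can occur under the present input stream. A short case analysis over the initial state $(i_0,j_0)\in\mathcal{E}\setminus\{T_1\}$ then shows that the reader's trajectory either re-synchronizes to $(0,0)$ within one full cycle or remains permanently within the safe states, never reaching $T_1$.

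The main obstacle is precisely this last case analysis: the initial state is arbitrary, so the reader's starting ``phase'' $i_0$ and flag $j_0$ may be out of alignment with the actual position of the unique $E$ in the reader's input stream. One has to verify, from each initial state, that the ``one $E$ per $\sigma$-window'' property of the input sequence precludes every $T_1$-bound transition of $\delta_E$; once this finite verification is completed, the rest of the argument follows directly from the structural observations at the outset.
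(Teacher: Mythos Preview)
Your forward-direction argument is correct and is essentially the paper's own, spelled out in more detail: the paper simply notes that a reader not already at $T_1$ must visit $(0,0)$ within at most $\sigma$ steps (since the first coordinate increments and the only exits from level $\sigma-1$ are to $T_1$ or $(0,0)$), and from $(0,0)$ a $\sigma$-window with $m\ne 1$ copies of $E$ forces $T_1$ within the next $\sigma$ steps.

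The gap is in the converse. The case analysis you defer cannot succeed as described, because the converse is in fact false as literally stated. Take the configuration with first layer $00\cdots\overleftarrow{_E0}$ and place an \texttt{END-READER} at site $0$ (whose end-coordinate is $\bdot$) in initial state $(\sigma-1,0)$: on its very first step it reads $\bdot$, and the transition $(\sigma-1,0)\stackrel{\bdot}{\longrightarrow}T_1$ in $\delta_E$ fires immediately. Likewise, a reader at the unique $E$-site in any initial state $(i,1)$ reads $E$ and is sent to $T_1$ at once. The ``one $E$ per $\sigma$-window'' property of the input stream does not preclude these transitions, because $\delta_E$ tests the $E$-count only relative to its \emph{own} counter phase, which is arbitrary here and need not be aligned with the position of the unique $E$ in the stream. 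The paper's one-line justification (``also clear from Fig.'') overlooks the same issue. For the application in Proposition~\ref{prop:odoa}, the bound $X_{\sigma,n}(f)\ge k^\sigma$ needs only the weaker (and true) statement that the specific second layer with every \texttt{END-READER} initialized at $(0,0)$ never reaches $T_1$, while $Y_{\sigma,n}(f)=X_{\sigma,n}(f)$ needs only the forward direction; you should establish that weaker converse instead of attempting the stated one.
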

 
\begin{proof}
Start with a configuration with 0 or 2 more states that 
contain an $E$.
Suppose that it is never terminated by the \texttt{END-READER}. 
Then there is a time $t$ and a 
position $x$ such that the state of 
the \texttt{END-READER} is $(0, 0)$, as it is clear from Fig.~\ref{fig:Transition function of E-reader}. Within $\sigma$ time steps from $t$, 
the \texttt{END-READER} transitions to $T_1$.
The converse result is also clear from Fig.~\ref{fig:Transition function of E-reader}.
\end{proof}
	
	\begin{figure}	
	\begin{center}
	\begin{tikzcd}
		(0, 0) \arrow[r, "\bdot "] \arrow [dr, "E"] \arrow[rrrr, leftarrow, bend left = 30 , "E"] \arrow[drrrr, leftarrow, bend right = 80, "\bdot "]& (1, 0) \arrow[r, "\bdot "] \arrow [dr, "E"] & (2, 0) \arrow[r, "\bdot "] \arrow [dr, "E"] & \cdots \arrow[r, "\bdot "] \arrow [dr, "E"] & (\sigma - 1, 0) \arrow [dr, "\bdot "] \\
		& (1, 1) \arrow[r, "\bdot "] \arrow[rrrr, crossing over, bend right = 40 , "E"] & (2, 1) \arrow[r, "\bdot "] \arrow[rrr, crossing over, bend right = 30 , "E"] & \cdots \arrow[rr, crossing over, bend right = 15 , "E"] \arrow[r, "\bdot "] & (\sigma - 1, 1) \arrow[r, , "E"]  & T_1\\
  	\end{tikzcd}
  	\caption{The transition function $\delta_E$ for \texttt{END-READER}.} \label{fig:Transition function of E-reader}
  	\end{center}
  	\end{figure}

We also need to terminate illegitimate configurations  with
 0 or at least 2 arrows. First,  a configuration with 2 or more arrows can be handled by adding the following assignment:
 \begin{itemize}
 \item 
 $\underline{s_1} s_2 \mapsto T$, for all $s_1, s_2\in \mathcal{S}_A$ such that $s_1$, $s_2$ both contain an $\leftarrow$.
 \end{itemize}

\begin{lem}\label{lemma: two more arrows terminate}
Assume $k > \sigma$. Let $L$ be a configuration that is never terminated by the \texttt{\tt END-READER} and such that at least two states of $L$ contain an $\leftarrow$. Then $L$ will be eventually terminated. 
\end{lem}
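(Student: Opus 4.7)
The plan is to show that whenever $L$ has at least two arrows, two of them must eventually come to occupy adjacent sites, at which point the rule $\underline{s_1}s_2 \mapsto T$ produces a $T$ and the $T$-propagation rule terminates the entire configuration. I expect to use only the first-layer (odometer) dynamics together with the adjacency rule; the \texttt{ARROW-READER} will not be needed here (it is presumably designed for other illegitimate cases, such as zero arrows).

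First, since $L$ is never terminated by the \texttt{END-READER}, Lemma~\ref{lem: END-READER} yields that $L$ and all its updates contain exactly one site carrying an $E$; moreover, none of the odometer assignments creates, destroys, or moves an $E$-marker, so this site, call it $e$, is fixed for all time. Next, I would inspect the odometer assignments to verify that, outside the adjacency rule, the total arrow count is preserved, and that each arrow moves one position left per time step, with a single exception: an arrow at $e$ with $E$-number $I<k-1$ stays at $e$ (assignment 7) while its number increments, and once the $E$-number reaches $k-1$ the arrow ``jumps'' from $e$ to $e-1$ in a single step via simultaneous assignments 8 and 6. Call such an arrow \emph{stuck}.

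The core step is: in any configuration with two or more arrows, some pair of arrows comes to occupy adjacent sites within $O(k)$ time steps. I split cases by whether some arrow initially sits at $e$. If no arrow is at $e$, all arrows move left in lockstep (preserving pairwise separations) until the leading arrow reaches $e+1$ and then moves onto $e$ via assignment 10, becoming stuck with $E$-number $0$; it then remains at $e$ for exactly $k$ steps, during which the next arrow, initially at distance $d\le \sigma-1$ from $e+1$, reaches $e+1$ in $d$ further steps, producing adjacency at $\{e,e+1\}$ since $\sigma-1<k$. If instead an arrow is at $e$ initially with $E$-number $I_0>0$, it is stuck for only $k-I_0$ steps; either the closest other arrow reaches $e+1$ within this window, or the stuck arrow leaves first and a fresh arrow appears at $e-1$ (via assignments 6 and 8), reducing the configuration to a pair of moving arrows to which the first case applies.

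The main obstacle, and precisely the point at which the hypothesis $k>\sigma$ enters, is guaranteeing that a freshly stuck arrow at $e$ remains there long enough for another arrow to traverse the ring and reach $e+1$. A careful accounting shows that, after at most one ``wrap-around'' event, the other arrow reaches $e+1$ at a time at which the stuck arrow is still present --- this relies on $k>\sigma$ to control the time windows $k-I_0$ and $\sigma-r-1$ that appear when the first stuck arrow leaves early. Once adjacency is achieved, the adjacency rule creates a $T$, and the $T$-propagation rule then makes $L$ terminated, as required.
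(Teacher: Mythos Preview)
Your proposal is correct and follows the same approach as the paper: an arrow at the $E$-site stalls while the remaining arrows advance leftward one step at a time, and $k>\sigma$ guarantees that some second arrow reaches the adjacent site $e+1$ before the stalled arrow can depart, triggering the $\underline{s_1}s_2\mapsto T$ rule. The paper's own proof is considerably terser---four sentences, with no explicit case split on whether an arrow initially occupies $e$ and no wrap-around bookkeeping---but the underlying mechanism is identical.
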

\begin{proof}
Since $L$ is not terminated by the \texttt{END-READER}, there is exactly one state of $L$ that contains $E$.
Assume that the two states with $\leftarrow$ are not adjacent, as otherwise the configuration is terminated immediately.   
Note that the arrow at the $E$ position stays there for $k$ updates and other arrows move left at every update. 
As $k > \sigma$, two arrows will eventually be adjacent.
\end{proof}
	
	Due to Lemma~\ref{lemma: two more arrows terminate}, it suffices to enlist a finite automaton  
whose mission is to terminate configurations with no $\leftarrow$. This automaton is the \texttt{ARROW-READER} that reads the particle and end coordinates and is given by $(\mathcal{A}, \{\leftarrow, \bdot\} \times \{E, \bdot\}, \delta_A, (i,j), T_2)$, where the transition function $\delta_A$ is described in Fig. \ref{fig:Transition function of A-reader} and its initial state is any state in $\mathcal{A}$. 
	From time $t$ to time $t+1$, an  \texttt{ARROW-READER}
	at site $x$ updates its state according to $\delta_A$ and stays at the same position $x$. 
	According to the odometer assignments, an $\leftarrow$ must appear at the $E$ position within $\sigma$ updates if there is 
	at least one $\leftarrow$.
Hence, the \texttt{ARROW-READER} terminates a configuration that fails this condition. The effect of this 
	automaton is summarized in the following lemma.

	
	
	\begin{figure}	
	\begin{center}
  	
  	\begin{tikzcd}
		0 \arrow[r, "{(\bdot , E)}"] \arrow[loop above, "w" ] & 1 \arrow[r, "{(\bdot , E)}"] \arrow[l, bend left = 20, "w"]& 2 \arrow[r, "{(\bdot , E)}"] \arrow[ll, bend left = 35, "w "]& \cdots  \arrow[r, "{(\bdot , E)}"] & \sigma \arrow[r, "{(\bdot , E)}"] \arrow[llll, bend left = 35, "w"]& T_2\\		  	
	\end{tikzcd}
	\caption{The transition function $\delta_A$ of the \texttt{ARROW-READER}. Here $w$ is any symbol in $\{ \leftarrow, \bdot\} \times \{ E, \bdot \}\setminus \{(\bdot, E)\}$.} \label{fig:Transition function of A-reader}
  	\end{center}
  	\end{figure}
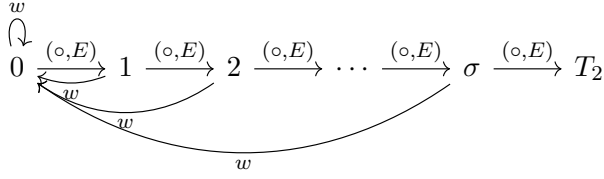
	
\begin{lem}\label{lem: ARROW-READER}
	Every configuration with no  $\leftarrow$ is eventually terminated for any initial state of the \texttt{\tt ARROW-READER}. Conversely, starting from a configuration whose first layer is $00\dots \overleftarrow{_E0}$, no \texttt{\tt ARROW-READER} ever reaches $T_2$ unless it starts there.
	\end{lem}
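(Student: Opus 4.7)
The plan is to prove the two claims separately. For the first claim, take a configuration with no $\leftarrow$. A quick inspection of assignments 1--14 shows that only 11, 12, 13 can apply to a site whose state and right neighbor both lack $\leftarrow$, and each of these preserves the state; additionally, the supplementary assignment $\underline{(I,\bdot,\ast,\bdot)}\,s \mapsto T$ (and its $E$-variant) immediately produces a $T$ at any site carrying $\ast$. So I may assume no $\ast$ is present, in which case the first layer is frozen for all time. I would then split on the number of $E$-containing sites: if this number is $0$ or at least $2$, Lemma~\ref{lem: END-READER} terminates the configuration through the \texttt{END-READER}; if exactly one site $x_E$ carries $E$, the reading at $x_E$ is $(\bdot, E)$ at every time step, and the \texttt{ARROW-READER} at $x_E$, started from any state in $\{0, 1, \ldots, \sigma\}$, reaches $T_2$ within $\sigma + 1$ steps.

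For the converse, starting from the first layer $00\cdots\overleftarrow{_E 0}$, the evolution is precisely the odometer PS of Table~\ref{odo-ex} by Lemma~\ref{lemma-enter-PS}. The $E$ permanently occupies site $\sigma - 1$, so at every site $x \ne \sigma - 1$ the reading belongs to $w$ at all times; hence an \texttt{ARROW-READER} at such an $x$, started from any non-$T_2$ state, transitions to $0$ after one step and remains there. At site $\sigma - 1$ the readings cycle through $(\leftarrow, E)$ for the $k$ time steps of each counting phase and $(\bdot, E)$ for the $\sigma - 1$ time steps of each propagation phase. Since $(\leftarrow, E) \in w$, the very first read collapses any non-$T_2$ initial state to $0$; thereafter the state returns to $0$ at the start of each counting phase and climbs only to $\sigma - 1 < \sigma$ during each propagation. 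So $T_2$ is never attained.

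The main technical point is verifying that each propagation phase lasts exactly $\sigma - 1$ time steps, uniformly over the number configuration. I would handle this by tracking the arrow: assignment 6 seeds it at site $\sigma - 2$ when propagation begins, and assignments 1--5 advance it one site to the left at each subsequent step (whether or not the carry $\ast$ persists), so it reaches site $0$ in $\sigma - 2$ further steps. One more step, governed by assignment 9 or 10 applied to site $\sigma - 1$ whose right neighbor is site $0$ via the periodic boundary, restores the arrow at site $\sigma - 1$. This yields exactly $\sigma - 1$ consecutive time steps without arrow at site $\sigma - 1$, independent of the carry pattern, which is what the previous paragraph required.
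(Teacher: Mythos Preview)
Your proof is correct. The paper in fact omits a formal proof of this lemma, relying instead on the one-sentence informal explanation preceding the statement (``an $\leftarrow$ must appear at the $E$ position within $\sigma$ updates if there is at least one $\leftarrow$''), so your argument is considerably more detailed than anything the paper provides.

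A couple of minor remarks. For the first claim, your case split on the number of $E$'s and appeal to Lemma~\ref{lem: END-READER} is a slight overreach: the lemma as stated only asks about termination via the \texttt{ARROW-READER}, not termination in general. But this is harmless, since the paper only uses the lemma in the proof of Proposition~\ref{prop:odoa}, where termination by any mechanism suffices. For the converse, your key computation---that the arrow is absent from the $E$-site for exactly $\sigma-1$ consecutive steps in every propagation phase, so the \texttt{ARROW-READER} there climbs only to state $\sigma-1$---is the substantive content, and you have it right; this is precisely the fact the paper alludes to but does not spell out.
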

	

The next proposition provides our first proof of 
Theorem~\ref{theorem: main of general rule}.

\begin{prop} \label{prop:odoa} Let $S(\sigma)= 16\sigma(\sigma + 2)$.
For the rule $f$ defined in this subsection, 
we have $X_{\sigma,n}(f)=Y_{\sigma,n}(f)\ge \lfloor n/S(\sigma)\rfloor^\sigma$ for $n\ge (\sigma+2)S(\sigma)+1$. 
\end{prop}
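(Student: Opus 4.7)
The strategy has two parts: (i) verify that the augmented state space $\mathcal{S}_A$ embeds in an $n$-letter alphabet for a suitable choice of $k$; and (ii) show that every PS with spatial period $\sigma\ge 2$ has temporal period at least $k^\sigma$. For (i), a direct count gives $|\mathcal{E}|=2\sigma$ and $|\mathcal{A}|=\sigma+2$, hence
\[
|\mathcal{S}_A|\;=\;8k\cdot 2\sigma\cdot(\sigma+2)+1\;=\;kS(\sigma)+1.
\]
I would set $k=\lfloor n/S(\sigma)\rfloor$, and if $n$ is an exact multiple of $S(\sigma)$, identify the terminator $T$ with an otherwise unused ``dead'' state so that $|\mathcal{S}_A|\le n$. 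The hypothesis $n\ge(\sigma+2)S(\sigma)+1$ then guarantees $k\ge\sigma+2>\sigma$, which is the inequality required by Lemma~\ref{lemma: two more arrows terminate}.

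For part (ii), let $\xi_\ell$ be any PS with spatial period $\sigma\ge 2$. The all-$T$ configuration is a spatial-period-$1$ fixed point, and by the cascading propagation assignment (a site becomes $T$ whenever its right neighbor contains $T$, or whenever either of its two neighbors has an automaton in $T_1$ or $T_2$), any configuration containing $T$, $T_1$, or $T_2$ becomes the constant $T$ within $\sigma$ steps. Hence $\xi_\ell$ contains none of these symbols. The $\ast$-without-$\leftarrow$ termination then forces every $\ast$ of $\xi_\ell$ to sit at a $\leftarrow$; the adjacent-arrows termination together with Lemma~\ref{lemma: two more arrows terminate} forces $\xi_\ell$ to have at most one $\leftarrow$; Lemma~\ref{lem: ARROW-READER} forces at least one $\leftarrow$; and Lemma~\ref{lem: END-READER} forces exactly one $E$. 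So the first layer of $\xi_\ell$ is legitimate.

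By Lemma~\ref{lemma-enter-PS}, the first-layer dynamics enters the odometer orbit, which visits the $k^\sigma$ distinct ``main'' configurations $a_0 a_1\cdots a_{\sigma-2}\overleftarrow{_E a_{\sigma-1}}$ indexed by $(a_0,\ldots,a_{\sigma-1})\in\mathbb{Z}_k^\sigma$ (as illustrated in Table~\ref{odo-ex}); hence the first-layer period, and a fortiori the full-state temporal period of $\xi_\ell$, is at least $k^\sigma$. Because the second-layer automata are driven deterministically by the first layer and, by Lemmas~\ref{lem: END-READER}--\ref{lem: ARROW-READER}, never reach their terminator states once the first layer lies in the odometer orbit, every PS with spatial period $\sigma$ projects to the same unique first-layer orbit and its full-state period is a fixed multiple of $\tau^*$ determined by that common orbit. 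This yields $X_{\sigma,n}(f)=Y_{\sigma,n}(f)\ge k^\sigma=\lfloor n/S(\sigma)\rfloor^\sigma$.

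The main obstacle is the classification argument in the second paragraph: one must enumerate every way in which an illegitimate first layer (wrong count of $E$'s or $\leftarrow$'s, a stray $\ast$, an ``orphan'' terminator in the second layer) or a peculiar automaton initial state could evade termination and generate a PS with spatial period $\sigma$ outside the odometer orbit. The four specialized termination rules and the two finite automata of Section~\ref{section:odometer} were engineered precisely to rule this out, but the bookkeeping --- particularly verifying that the automata cannot synchronize into an unexpected periodic pattern on an illegitimate first layer --- is the principal technical task.
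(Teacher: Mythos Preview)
Your approach is essentially the paper's: count $|\mathcal{S}_A|=kS(\sigma)+1$, embed in $\mathbb{Z}_n$ with leftover states sent to $T$, and use the termination lemmas together with Lemma~\ref{lemma-enter-PS} to show that every spatial-period-$\sigma$ PS must have a legitimate first layer lying in the odometer orbit, whence the period is at least $k^\sigma$. One small correction: your handling of the edge case $S(\sigma)\mid n$ does not work (then $kS(\sigma)=n$ and there is no spare ``dead'' state with which to identify $T$); the paper sidesteps this by taking $k=\lfloor (n-1)/S(\sigma)\rfloor$ rather than $\lfloor n/S(\sigma)\rfloor$ and sending any extra states immediately to $T$.
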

	
\begin{proof}
Observe that $|\mathcal{S}_A|=S(\sigma)\cdot k+1$. 
 For a number of states $n$, let $k=\lfloor (n-1)/S(\sigma)\rfloor$. 
 Encode the odometer rule with automata on $S(\sigma)\cdot k+1$
 states, and make any leftover states immediately transition 
 to $T$. 
Let $L\in \mathcal{S}_A^\sigma$ be a configuration with its first layer is $00\dots\overleftarrow{_E0}$; on the second layer, the \texttt{END-READER}'s are at state $(0, 0)$ and the \texttt{ARROW-READER}'s are at state $0$. 
Then the configuration is not terminated by either \texttt{END-READER} or \texttt{ARROW-READER}, by 
Lemmas~\ref{lem: END-READER} and~\ref{lem: ARROW-READER}.
Then the global configuration restricted on the first layer is the one of odometer CA, which has temporal period 
at least $k^\sigma$. 
Therefore, $X_{\sigma, n}(f) \ge k^\sigma=\lfloor n/S(\sigma)\rfloor^\sigma$. 

Furthermore, note that any illegitimate configuration in $\mathcal{S}_A^\sigma$, as well as
any configuration not in  $\mathcal{S}_A^\sigma$, 
will eventually 
produce the constant configuration of all $T$s with spatial period $1$, by Lemmas~\ref{lem: END-READER}--\ref{lem: ARROW-READER}. Furthermore, 
any legitimate configuration on the first layer will eventually 
update to a configuration whose first layer is in the odometer PS (by Lemma~\ref{lemma-enter-PS}), and will never be terminated by the second layer 
that is not already in one of the terminator states
(by Lemmas~\ref{lem: END-READER} and~\ref{lem: ARROW-READER}).
Therefore, $Y_{\sigma, n}(f)=X_{\sigma, n}(f)$.
\end{proof}

\subsection{The prime partition rule}

We begin with a simple consequence of the prime number theorem.
	
\begin{lem} \label{lem: sigma primes}
For an arbitrary $\sigma > 0$, and for large enough $n$,   there are $\sigma$ primes $p_0, \dots, p_{\sigma- 1} \in [\frac{n-1}{2\sigma}, \frac{n-1}{\sigma}]$.
\end{lem}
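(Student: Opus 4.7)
The plan is a direct application of the prime number theorem (PNT): $\pi(x)=(1+o(1))\,x/\log x$ as $x\to\infty$, where $\pi$ is the prime-counting function. Set $a=(n-1)/(2\sigma)$ and $b=(n-1)/\sigma$, so $b=2a$. With $\sigma$ fixed, $a\to\infty$ as $n\to\infty$, and both $\log a$ and $\log b=\log a+\log 2$ equal $\log n+O(1)$.

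The first step is to estimate $\pi(b)-\pi(a)$ using the PNT:
\[
\pi(b)-\pi(a)=\frac{2a}{\log(2a)}-\frac{a}{\log a}+o\!\left(\frac{a}{\log a}\right).
\]
The second step is the elementary algebraic simplification
\[
\frac{2a}{\log(2a)}-\frac{a}{\log a}=\frac{a\bigl(2\log a-\log(2a)\bigr)}{\log a\,\log(2a)}=\frac{a(\log a-\log 2)}{\log a\,\log(2a)}\sim\frac{a}{\log a},
\]
so that $\pi(b)-\pi(a)\sim a/\log a=(n-1)/\bigl(2\sigma\log((n-1)/(2\sigma))\bigr)$, which tends to $\infty$ as $n\to\infty$. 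Consequently, $\pi(b)-\pi(a)\ge\sigma$ for all $n\ge N(\sigma)$ sufficiently large, which furnishes the required $\sigma$ primes $p_0,\dots,p_{\sigma-1}$ in $[a,b]$.

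There is no real obstacle: the PNT is exactly the tool needed, and the cancellation in $(2a)/\log(2a)-a/\log a$ is mild. If one wanted to avoid invoking the full PNT, a Chebyshev-type bound of the form $\pi(2x)-\pi(x)\ge c\,x/\log x$ for large $x$ (which follows from elementary estimates on binomial coefficients) would suffice equally well, since again the right-hand side grows without bound.
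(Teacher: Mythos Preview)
Your proof is correct and matches the paper's approach: the paper simply declares the lemma ``a simple consequence of the prime number theorem'' without giving any details, and you have supplied exactly those details via the standard estimate $\pi(2a)-\pi(a)\sim a/\log a\to\infty$.
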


Assume that $n$ is large enough so that Lemma \ref{lem: sigma primes}
holds. 
Find disjoint sets
$P_0, \dots, P_{\sigma - 1} \subset \mathbb{Z}_n\setminus\{0\}$
such that $|P_j| = p_j$, for $j = 0, \dots, \sigma - 1$.
This can be achieved since $p_0 + \dots + p_{\sigma - 1} \le n-1$. 
The state $0\in \mathbb{Z}_n \setminus (P_0 \cup \dots \cup P_{\sigma - 1})$
will play the role of the terminator.
Let $\phi_j: P_j \to P_j$ be a cyclic permutation of the 
$p_j$ states. Keeping the right-sided convention 
from the Section~\ref{section:odometer}, we define the CA rule $f$ as follows:
$$
f(s,s')=
\begin{cases}
 \phi_j(s) &\text{if } s\in P_j\text{ and }s' \in P_{(j + 1)\mod\sigma} \text{ for some } j \in \{0,\ldots, \sigma-1\}\\
 0 &\text{otherwise}
\end{cases}.
$$

\begin{prop}\label{prop:prp}
For $f$ defined above, we have $X_{\sigma,n}(f)=Y_{\sigma,n}(f)$
and $\liminf_{n\to\infty}n^{-\sigma}Y_{\sigma,n}(f)\ge
(2\sigma)^{-\sigma}$. 
\end{prop}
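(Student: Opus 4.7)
The plan is as follows. First, I characterize the configurations that avoid the terminator state $0$ under the dynamics. Observe that $0 \notin P_0 \cup \cdots \cup P_{\sigma-1}$, so the rule outputs $0$ whenever either input equals $0$; in particular, $0$s at a site are absorbing, and a $0$ to the immediate right of any site forces that site to become $0$ at the next step. Hence any initial configuration that contains a $0$ is swept into the all-$0$ fixed point (spatial period $1$) within at most $\sigma$ time steps, so any PS of spatial period $\sigma$ must arise from a trajectory that remains inside $(P_0 \cup \cdots \cup P_{\sigma-1})^{\mathbb{Z}}$.

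Second, suppose a trajectory is $0$-free. Then for every $x$, having $\xi_t(x) \in P_j$ forces $\xi_t(x+1) \in P_{(j+1) \bmod \sigma}$ (otherwise the rule gives $\xi_{t+1}(x) = 0$). On a configuration of period $\sigma$, this means there is an index $j_0 \in \{0, 1, \ldots, \sigma-1\}$ such that $\xi_t(x) \in P_{(j_0 + x) \bmod \sigma}$ for all $x$. I will call such a configuration \emph{tagged}. The set of tagged configurations is forward-invariant, and on it the rule reduces to applying the cyclic permutation $\phi_{(j_0 + x) \bmod \sigma}$ independently at each site $x \in \{0, 1, \ldots, \sigma-1\}$. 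I will also verify that a tagged configuration has spatial period exactly $\sigma$: a hypothetical smaller period $\sigma' \mid \sigma$ would force $(j_0 + x + \sigma') \equiv (j_0 + x) \pmod{\sigma}$ for all $x$, hence $\sigma \mid \sigma'$, contradicting $\sigma' < \sigma$.

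Third, since coordinate $x$ undergoes a cyclic permutation of order $p_{(j_0 + x) \bmod \sigma}$ and the different coordinates evolve independently, the temporal period of the resulting PS equals $\mathrm{lcm}(p_0, p_1, \ldots, p_{\sigma-1}) = p_0 p_1 \cdots p_{\sigma-1}$, because the $p_j$ are distinct primes. This value is the same for every PS of spatial period $\sigma$, giving $X_{\sigma,n}(f) = Y_{\sigma,n}(f) = p_0 p_1 \cdots p_{\sigma-1}$. Combining with the lower bound $p_j \ge (n-1)/(2\sigma)$ from Lemma \ref{lem: sigma primes} gives
\[
Y_{\sigma,n}(f) \;\ge\; \left(\frac{n-1}{2\sigma}\right)^{\!\sigma},
\]
and dividing by $n^\sigma$ and letting $n \to \infty$ yields $\liminf_{n\to\infty} n^{-\sigma} Y_{\sigma,n}(f) \ge (2\sigma)^{-\sigma}$.

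The main obstacle is making step two fully rigorous --- namely, confirming that the only PS with spatial period $\sigma$ are the tagged ones, and that no surviving configuration can accidentally have a smaller spatial period. Once the absorbing/propagating nature of $0$ is pinned down and the tagged form is extracted from $0$-freeness, the rest reduces to the elementary observation that an lcm of distinct primes is their product.
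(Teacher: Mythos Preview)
Your argument is correct and follows essentially the same approach as the paper: your ``tagged'' configurations are exactly the paper's ``regular'' ones, and both proofs hinge on the dichotomy that non-regular configurations collapse to the all-$0$ constant while regular ones cycle with period $p_0\cdots p_{\sigma-1}$. Your write-up is in fact more thorough than the paper's, which leaves implicit the verification that a regular configuration has spatial period exactly $\sigma$ and the observation that the temporal period is the product of the distinct primes; one tiny point you might make explicit is that any state in $\mathbb{Z}_n\setminus(\{0\}\cup P_0\cup\cdots\cup P_{\sigma-1})$ also maps to $0$ under $f$, so a $0$-free trajectory automatically stays inside $P_0\cup\cdots\cup P_{\sigma-1}$.
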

 
\begin{proof}
Call a configuration $s_{ 0} s_{1} \dots s_{\sigma - 1}$ {\it regular\/} if there exists an $\ell$
so that 
$s_j\in P_{(j+\ell)\mod \sigma}$, $j=0,\ldots,\sigma-1$. 
To show that $X_{\sigma, n}(f) \ge (n-1)^\sigma/(2\sigma)^\sigma$, run the rule starting from any regular configuration. Such a  configuration
appears again for the first time after  $p_0p_1\dots p_{\sigma - 1} \ge (n-1)^\sigma/(2\sigma)^\sigma$ updates.
To show that $Y_{\sigma, n}(f) = X_{\sigma, n}(f)$, observe 
that any non-regular initial configuration eventually 
ends up in the constant configuration of all $0$s.  
\end{proof}

\section{Discussion and open problems} \label{section: conclusion}

In this paper, we continue our study of the shortest and the longest temporal periods of a PS for a fixed spatial period $\sigma$.
While we are able to construct a rule whose longest temporal period grows as $n^\sigma$ for large $n$, more precise 
results remain elusive even for $\sigma=3$. We start our discussion with this case. 

We call an $n$-state rule that has a PS with spatial period $\sigma$ and temporal period $T(\sigma, n)$ as \textbf{maximum cycle length} (\textbf{MCL}) rule.
For $\sigma = 3$, our computations demonstrate that an MCL rule exists for $n \le 20$.
More precisely, the number of MCL rules is $1$ for $n=2$ (out of $2^4$ rules), $12$ for $n=3$ (out of $3^9$ rules) and $732$ for $n=4$ (out of $4^{16}$ rules).
These numbers match the first three terms of the sequence
\begin{equation}\label{curious}
(-1)^{k}7^{2k}E_{2k}\left(\frac{3}{7}\right), k = 0, 1, 2, 3, \ldots = 1, 12, 732, 109332, \ldots,
\end{equation}
where $E_n$ are the Euler polynomials.
Unfortunately, it is hard to traverse all of the $5^{25} \approx 2.98 \times 10^{17}$ $5$-state rules to count the number of MCL ones, so we merely state an open question.

\begin{ques} Assume $\sigma=3$. 
Does there exist an MCL rule for any number of states $n\ge 2$? 
If so, is the number of MCL rules given by (\ref{curious}) 
for all $n$, 
or is the connection just a curious coincidence for $n\le 4$? 
\end{ques}

If $X_{\sigma, n}(f) = T(\sigma, n)$, then automatically $Y_{\sigma, n}(f) = X_{\sigma, n}(f) = T(\sigma, n)$, as the PS goes through all configurations with
number of states $n$ and spatial period $\sigma$.
However, for $\sigma\ge 4$, an MCL may not exist, as demonstrated
for $n=3$ by Table \ref{table: long cycle}, 
and therefore the maxima of $X_{\sigma, n}$ and $Y_{\sigma, n}$ may differ. This motivates our next question.


\begin{table}[ht!]
\centering
\caption {Maximal temporal period for $n=3$ and 
spatial periods $\sigma\le 10$. We also give 
$N_X$, and $N_Y$, the numbers of rules that realize 
the respective maxima.}   
\begin{tabular}{c|ccccc}
$\sigma$ & $\max_fX_{\sigma, 3}(f)$ & $N_X$   &$\max_fY_{\sigma, 3}(f)$ & $N_Y$ 		   & $T(\sigma, 3)$\\ \hline \hline
1 & 3   & 1458  & 3      & 1458 & 3 \\ 
2 & 6   & 216	& 6   & 216   & 6 \\ 
3 	& 24  & 12 & 24  & 12    & 24 \\ 
4    & 40  & 12   & 32  & 72  & 72\\ 
5 		& 120 & 2 & 120	& 2    & 240\\ 
6     & 111 & 6	& 84  & 42  & 696\\
7    	& 1967& 12  & 546	& 2  & 2184\\
8 	& 904 & 12   & 896 & 24 & 6480\\ 
9    	& 9207& 12  & 1809& 12  & 19656\\ 
10 	& 10490 & 6  & 410 & 12 & 58800\\ 
\end{tabular} 
\label{table: long cycle}
\end{table}

\begin{ques} What is the asymptotic behavior of $\max_f X_{\sigma,3}(f)$ as $\sigma$ grows? Or of $\max_f X_{\sigma,n}(f)$ for an arbitary fixed $n$? Making $n$ large first, 
what is the asymptotic behavior of 
$$
\liminf_{n\to\infty} n^{-\sigma}\max_f X_{\sigma,n}(f)
$$
for large $\sigma$? (See Proposition~\ref{prop:odo} for an exponentially small lower bound.) 
The same questions can be posed for $Y_{\sigma,n}$ 
(for which Propositions~\ref{prop:odoa} and~\ref{prop:prp} provide even smaller
lower bounds). 
\end{ques}

To discuss the relation between $X_{\sigma, n}$ and 
$Y_{\sigma, n}$ for additive rules, let 
$\rho_\sigma(n)=\max_{f \in A_n}Y_{\sigma, n}(f)$.
As it is clear from Table \ref{table: experimental results}, 
$\pi_\sigma(n)$ and $\rho_\sigma(n)$ may differ, even for $\sigma = 2$ or $3$. This suggests our next question.
 
\begin{ques} Fix a $\sigma\ge 2$. 
Is there an explicit formula for $\rho_\sigma(n)$, in terms of $n$, at least for small $\sigma$? Can one characterize $n$ 
for which $\pi_\sigma(n)=\rho_\sigma(n)$?
\end{ques}

\begin{table}[H]
\centering
\caption {Maximum of shortest and longest temporal periods of additive rules, for $\sigma = 2, 3$ and $n = 2, \dots, 20$} 
\begin{tabular}{c|ccccccccccccccccccc}
$n$                                         & 2 & 3 & 4 & 5  & 6 & 7 & 8  & 9  & 10 & 11  & 12 & 13 & 14 & 15 & 16 & 17  & 18 & 19 & 20 \\ \hline\hline
$\rho_2(n)$ & 2 & 2 & 2 & 4  & 2 & 6 & 2  & 2  & 4  & 10  & 2  & 12 & 6  & 4  & 2  & 16  & 2  & 18 & 4  \\
$\pi_2(n)$ & 2 & 2 & 2 & 4  & 2 & 6 & 4  & 6  & 4  & 10  & 2  & 12 & 6  & 4  & 8  & 16  & 6  & 18 & 4  \\ \hline
$\rho_3(n)$ & 3 & 6 & 3 & 24 & 6 & 6 & 3  & 6  & 24 & 120 & 6  & 12 & 6  & 24 & 3  & 288 & 6  & 18 & 24 \\
$\pi_3(n)$ & 3 & 6 & 6 & 24 & 6 & 6 & 12 & 18 & 24 & 120 & 6  & 12 & 6  & 24 & 24 & 288 & 18 & 18 & 24
\end{tabular}
\label{table: experimental results}
\end{table}\

For a prime power $p^m$, we define the function 
$\ub_\sigma(p^m)$ to be the upper bound obtained 
from Propositions~\ref{proposition: MOW 4.3},~\ref{proposition: MOW 4.1} and ~\ref{proposition: MOW 4.5}. That is, 
$\ub_1(p)=p-1$; $\ub_\sigma(p)=p^{\ord_\sigma(p)}-1$ if 
$p \nmid \sigma$ and $\sigma\ge 2$; $\ub_\sigma(p)=
p^k\cdot \ub_{\sigma/p^k}(p)$ if $k\ge 1$ is the largest power of
$p$ dividing $\sigma$; and 
$\ub_\sigma(p^m)=p\cdot\pi_\sigma(p^{m-1})$ if $m\ge 2$. 
It is common that $\pi_\sigma(p^m)=\ub_\sigma(p^m)$, 
most notably for $\sigma=5$.  

\begin{ques}\label{q-sigma5}
Is it true that, for all prime powers $p^m$, 
$\pi_5(p^m)=\ub_5(p^m)$?
\end{ques}

We have checked that there are no counterexamples 
to the ``yes'' answer on Question~\ref{q-sigma5}
for all $p^m$ such that $p\le 50$ and $\ub_5(p^m)\le 10^5$. 
As counterexamples should be harder to come by for 
larger $p$ (more $a$ and $b$ to choose from) and 
for larger $m$ (less chance for $\Pi(a,b;p^m)$ to 
be equal to $\Pi(a,b;p^{m-1})$), we conjecture that the answer to Question~\ref{q-sigma5} is indeed 
affirmative. We also remark, that, if this conjecture holds, 
there is an explicit formula for $\pi_5(n)$ 
for all $n$, due to Lemma~\ref{lemma: lower bound for composite n} and Proposition~\ref{proposition: MOW 4.4}. 

It is not {\it always\/} true that $\pi_\sigma(p^m)=\ub_\sigma(p^m)$.  Table~\ref{table-pi-ub} contains 
 a list of examples of inequality 
we have found for $\sigma\le 50$. One hint that the table 
offers is easy to prove and we do 
so in the next proposition.

\begin{table}[ht!]\centering
\caption{Examples with $\pi(p^m)<\ub(p^m)$.  An arrow indicates a range of powers.}\label{table-pi-ub}
    \begin{tabular}{ c|ccccccccccccccccccc}
    \hline
     $\sigma$ & 2 & 4 & 7  & 8 & 11 & 13 & 14 & 16  
    \\ \hline\hline
    $p^m$ & $2^2$ & $2^{2\rightarrow 3}$ & 3 & 
    $2^{2\rightarrow 4}$ & 2 & 2 & 3  & $2^{2\rightarrow 5}$
    \\ 
    $\pi_\sigma(p^m)$ &  2 & 4 & 364 & 8 & 341 & 819 &364 & 16  
    \\ 
    $\ub_\sigma(p^m)$ &  4 & 8 & 728 & 16 & 1023 & 4095 & 728 & 32  
 \\ \hline
    \end{tabular}
    
    \vspace{0.2cm}
    
    \begin{tabular}{ c|cccccccccccccc}
    \hline
     $\sigma$   & 21 & 22 
     & 26 & 32 & 42 & 44
    \\ \hline\hline
    $p^m$    
    & 3 & 2 & 2 & $ 2^{2\rightarrow 6}$ & 3 & 2
    \\ 
    $\pi_\sigma(p^m)$   & 1092 
    & 682 & 1638 & 32 & 1092 & 1364
    \\ 
    $\ub_\sigma(p^m)$     & 
    2184 
    & 2046 & 8190 & 64 & 2184 & 4092
 \\ \hline
    \end{tabular}
\end{table}


\begin{prop} \label{prop-powers2}
Assume that $\sigma=2^k$, $k\ge 1$. 
Then $\pi_\sigma(2^m)=2^k$ for all $m\le k+1$, 
but $\pi_\sigma(2^{k+2})=2^{k+1}$. 
\end{prop}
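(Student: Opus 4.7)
The plan is to analyze $(a+bx)^t$ in the finite local ring $R_m=\Z_{2^m}[x]/(x^{2^k}-1)$, whose unique maximal ideal is $\mathfrak{m}=(2,x-1)$. Since $R_m$ is Artinian local, $\mathfrak{m}$ is nilpotent; therefore if $a+b$ is even (equivalently, $a+bx\in\mathfrak{m}$) we have $(a+bx)^t=0$ for large $t$, so the CA period equals $1$. It thus suffices to treat the unit case $a+b$ odd, for which the central computation is
\begin{equation*}
(a+bx)^{2^k}=a^{2^k}+b^{2^k}+\sum_{j=1}^{2^k-1}\binom{2^k}{j}a^{2^k-j}b^j x^j\quad\text{in }\Z[x]/(x^{2^k}-1),
\end{equation*}
combined with Kummer's identity $v_2\bigl(\binom{2^k}{j}\bigr)=k-v_2(j)$ for $1\le j<2^k$, the elementary bound $n\ge v_2(n)+1$ for $n\ge 1$, and the LTE-type estimate $c^{2^k}\equiv 1\pmod{2^{k+2}}$ for every odd $c$ and $k\ge 1$.

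For the first assertion, I would show $(a+bx)^{2^k}\equiv 1\pmod{2^{k+1}}$ whenever $a+b$ is odd. The constant term is $\equiv 1\pmod{2^{k+1}}$ (the odd one of $a,b$ contributes $1$ by LTE; the even one contributes a multiple of $2^{2^k}$, and $2^k\ge k+1$), while each middle coefficient has $v_2\ge(k-v_2(j))+(2^k-j)v_2(a)+j\,v_2(b)\ge k+1$; the two subcases ($a$ odd, $b$ even) and ($a$ even, $b$ odd) reduce respectively to $j-v_2(j)\ge 1$ and $(2^k-j)-v_2(j)\ge 1$, both instances of the elementary bound above (using $v_2(2^k-j)=v_2(j)$). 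Hence $(a+bx)^{2^k}=1$ in $R_m$ for every $m\le k+1$, so the period divides $2^k$. The matching lower bound is immediate from $a=0$, $b=1$: then $T(x)=x$ and the orbit $1,x,x^2,\ldots,x^{2^k-1},1,\ldots$ has period exactly $2^k$.

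For $m=k+2$ the same expansion yields $(a+bx)^{2^k}=1+2^{k+1}R(x)$ in $\Z[x]/(x^{2^k}-1)$ when $a+b$ is odd, and squaring gives $(a+bx)^{2^{k+1}}\equiv 1+2^{k+2}R+2^{2k+2}R^2\equiv 1\pmod{2^{k+2}}$, so every orbit has period dividing $2^{k+1}$. For the matching lower bound, take $a=1$, $b=2$: only the $j=1$ term of $(1+2x)^{2^k}$ contributes to $x^1$, producing the coefficient $\binom{2^k}{1}\cdot 2=2^{k+1}$, which is nonzero modulo $2^{k+2}$; hence $(1+2x)^{2^k}\ne 1$ in $R_{k+2}$, and since $1+2x$ is a unit (as $a+b=3$ is odd) with order dividing $2^{k+1}$, its order is exactly $2^{k+1}$. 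The main obstacle in this plan is the second paragraph: ensuring that all $2^k-1$ middle coefficients simultaneously vanish mod $2^{k+1}$ requires combining Kummer's formula with parity information about $a$ and $b$, and the two subcases must be handled separately because the exponents of $a$ and $b$ play asymmetric roles.
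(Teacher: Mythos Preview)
Your proof is correct and follows essentially the same route as the paper's. Both arguments split on the parity of $a+b$: in the non-unit case the paper observes $(1+x)^{2^k}=0$ over $\Z_2$ and lifts, while you phrase this more abstractly via nilpotence of the maximal ideal in the Artinian local ring $R_m$; in the unit case both compute $(a+bx)^{2^k}$ via the binomial expansion, with you supplying the Kummer/valuation details that the paper leaves implicit in its bare assertion $(a+bx)^{2^k}=a^{2^k}$. The lower-bound witnesses $(a,b)=(0,1)$ for $m\le k+1$ and $(a,b)=(1,2)$ for $m=k+2$ are identical to the paper's, and your squaring step $(1+2^{k+1}R)^2\equiv 1\pmod{2^{k+2}}$ is exactly the paper's ``by the same argument'' for the upper bound at $m=k+2$.
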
 

\begin{proof}
When $n=2$, $(1+x)^{2^k}=1+x^{2^k}=0$ in 
$\Z_{2}[x]/(x^\sigma-1)$. This implies that, 
for any $m$,
when $a$ and $b$ are both odd, all states are eventually 
divisible by $2$, and then by additivity $(a+bx)^t=0$ 
for large enough $t$. Clearly the same is true 
when $a$ and $b$ are both even. If $a$ is odd and $b$ is even, 
$$
(a+bx)^{2^k}=a^{2^k}=1\text{ in }\Z_{2^{k+1}}[x]/(x^\sigma-1), 
$$
and the same conclusion holds 
if $a$ is even and $b$ is odd.
This shows that $\pi_\sigma(2^m)\le 2^k$ for $m\le k+1$. As clearly
$\Pi_\sigma(0,1; 2^m)=\sigma=2^k$, we get 
$\pi_\sigma(2^m)=2^k$. 

By the same argument, $(a+bx)^{2^{k+1}}=1$ in 
$\Z_{2^{k+2}}[x]/(x^\sigma-1)$, for all $a$ and $b$. Moreover, 
it is easy to check that $(1+2x)^{2^k}=1+2^{k+1}x+2^{k+1}x^2\ne 1$ in  $\Z_{2^{k+2}}[x]/(x^\sigma-1)$, proving the last claim. 
\end{proof}

Call a prime $p$ \textbf{persistent} if $\pi_\sigma(p)<\ub_\sigma(p)$ for infinitely many $\sigma$. We conclude with 
a few questions 
suggested by Table~\ref{table-pi-ub}.

\begin{ques} 
(1) Is either $2$ or $3$ persistent? 
(2) Are there infinitely many primes $p$ such 
that is $\pi_\sigma(p)<\ub_\sigma(p)$
for some $\sigma$? 
(3) Is $2$ the only prime with 
$\pi_\sigma(p^m)<\ub_\sigma(p^m)$ for some $m\ge 2$?
\end{ques}

\section*{Acknowledgements}
Both authors were partially supported by the NSF grant DMS-1513340.
JG was also supported in part by the Slovenian Research Agency (research program P1-0285). 
The authors thank Jingyang Shu for useful conversations, in particular for pointing out the sequence~(\ref{curious}).

\bibliography{references}

\section{Appendix}
In this appendix, we determine the structure of the 
multiplicative group of Eisenstein integers modulo $n$, that is, 
the group $\mathbb{Z}_n[\omega]^\times=\{
a + b\omega \in \mathbb{Z}_n[\omega]: a^2 + b^2 - ab \in \mathbb{Z}_n^\times\}
$, where $\omega = e^{2\pi i/3}$. 
While our arguments are similar to those in the paper \cite{allan2008classification} on Gaussian integers modulo $n$, 
we are aware of no reference that directly implies Theorem~\ref{theorem: eisenstein}, 
so we provide a sketch of the proof.

\begin{lem}\label{lemma-qr}
1. Let $p\ge 3$ be a prime number and $a$ be an integer not divisible by $p$.
Then $x^2 = a \mod p$ either has no solutions or exactly two solutions.

2. Let $p\ge 5$ be a prime number. The number $-3$ is a quadratic residue modulo $p$ if and only if $p = 1 \mod 6$.
\end{lem}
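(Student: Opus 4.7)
For part 1, my plan is to use the standard fact that a polynomial of degree $d$ over a field has at most $d$ roots. If $x^2=a\mod p$ has any solution $x_0$, then $(-x_0)^2=a$ as well, and $x_0\neq -x_0$ in $\Z_p$ because $p$ is odd and $x_0\neq 0$ (as $p\nmid a$). Any third root $x_1$ would satisfy $(x_1-x_0)(x_1+x_0)=0$ in the field $\Z_p$, forcing $x_1=\pm x_0$. This is routine and gives exactly two solutions when any exist.

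For part 2, I will compute the Legendre symbol $\left(\frac{-3}{p}\right)$ via quadratic reciprocity. Writing $\left(\frac{-3}{p}\right)=\left(\frac{-1}{p}\right)\left(\frac{3}{p}\right)$ and using $\left(\frac{-1}{p}\right)=(-1)^{(p-1)/2}$ together with reciprocity $\left(\frac{3}{p}\right)\left(\frac{p}{3}\right)=(-1)^{\frac{p-1}{2}\cdot\frac{3-1}{2}}=(-1)^{(p-1)/2}$, the two sign factors cancel and leave $\left(\frac{-3}{p}\right)=\left(\frac{p}{3}\right)$. The latter equals $+1$ iff $p\equiv 1\pmod 3$. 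Since $p\ge 5$ is an odd prime, $p\equiv 1\pmod 3$ is equivalent to $p\equiv 1\pmod 6$, completing the proof.

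An alternative (and arguably more illuminating, given the role of $\omega$ in the paper) approach for part 2 is to observe that $2\omega+1$ is a square root of $-3$ in $\Z[\omega]$, so $-3$ is a QR modulo $p$ if and only if $\Z_p$ already contains an element $\alpha$ with $\alpha^2=-3$, equivalently a primitive cube root of unity $\frac{-1+\alpha}{2}$. Such an element exists in $\Z_p^\times$ if and only if $3\mid |\Z_p^\times|=p-1$, i.e., $p\equiv 1\pmod 3$, which for odd $p$ means $p\equiv 1\pmod 6$. I would choose whichever of these two arguments is most consistent with the tone of the appendix; the reciprocity computation is the shortest. Neither step presents a real obstacle — the lemma is a standard tool assembled to be invoked in the subsequent classification of $\Z_{p^m}[\omega]^\times$.
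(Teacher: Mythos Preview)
Your arguments for both parts are correct and entirely standard. The paper itself does not give a proof of this lemma at all: it simply cites \cite{allan2008classification} for part~1 and an exercise in \cite{leveque1996fundamentals} for part~2. Your self-contained treatment is therefore strictly more informative than what appears in the paper; the quadratic-reciprocity computation for part~2 is the textbook route, and your alternative via primitive cube roots of unity in $\Z_p^\times$ is indeed the one that meshes most naturally with the surrounding material on $\Z_n[\omega]$.
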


\begin{proof}
See \cite{allan2008classification} for the proof of part 1.
For part 2, see \cite{leveque1996fundamentals}, Exercise 9 on page 109.
\end{proof}

\begin{lem}\label{lemma-app-p}
Let $p$ be a prime.
1. If $p = 3$, then $\mathbb{Z}_p[\omega]^\times \cong \mathbb{Z}_6$.

2. If $p = 1 \mod 6$, then $\mathbb{Z}_p[\omega]^\times \cong \mathbb{Z}_{p - 1} \times \mathbb{Z}_{p - 1}$.

3. If $p = 5 \mod 6$, then $\mathbb{Z}_p[\omega]^\times \cong \mathbb{Z}_{p^2 - 1}$.
\end{lem}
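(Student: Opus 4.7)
The plan is to exploit the ring isomorphism $\mathbb{Z}_p[\omega] \cong \mathbb{Z}_p[x]/(x^2+x+1)$, induced by $\omega\mapsto x$. This reduces the structure of $\mathbb{Z}_p[\omega]^\times$ to a question about how $x^2+x+1$ factors modulo $p$, which is governed by its discriminant $-3$: the polynomial splits into distinct linear factors, is irreducible, or has a repeated root according to whether $-3$ is a nonzero quadratic residue, a non-residue, or vanishes modulo $p$. The three cases of the lemma correspond exactly to these three factorization types, and the bridge between them is provided by Lemma~\ref{lemma-qr}.2.

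For part 2 ($p \equiv 1 \mod 6$), Lemma~\ref{lemma-qr}.2 gives that $-3$ is a quadratic residue mod $p$, so $x^2+x+1$ splits into two distinct linear factors over $\mathbb{F}_p$. The Chinese Remainder Theorem then yields $\mathbb{Z}_p[\omega]\cong \mathbb{F}_p\times\mathbb{F}_p$, whose unit group is $\mathbb{F}_p^\times\times\mathbb{F}_p^\times\cong \mathbb{Z}_{p-1}\times\mathbb{Z}_{p-1}$. For part 3 with $p\ge 5$, the same lemma shows $-3$ is a non-residue, so $x^2+x+1$ is irreducible and $\mathbb{Z}_p[\omega]$ is a field of order $p^2$; its unit group is cyclic of order $p^2-1$. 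The remaining prime $p=2$ is handled by direct inspection: $x^2+x+1$ has no roots in $\mathbb{F}_2$ (both $0$ and $1$ evaluate to $1$), so $\mathbb{Z}_2[\omega]\cong \mathbb{F}_4$ and $\mathbb{Z}_2[\omega]^\times\cong \mathbb{Z}_3=\mathbb{Z}_{p^2-1}$.

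For part 1 ($p=3$), the polynomial reduces to $x^2+x+1\equiv (x-1)^2 \mod 3$, so $\mathbb{Z}_3[\omega]$ is a ramified local ring of order $9$ rather than a product of fields, and the above dichotomy does not apply. I would use the norm $N(a+b\omega)=a^2-ab+b^2$, observe the identity $a^2-ab+b^2=(a+b)^2-3ab\equiv (a+b)^2 \mod 3$, and conclude that $a+b\omega$ is a unit iff $a+b\not\equiv 0\mod 3$, giving $9-3=6$ units. To show that this group of order $6$ is cyclic, I would exhibit a generator: using $\omega^2=-1-\omega$, compute $(1+\omega)^2=\omega$, hence $(1+\omega)^3=\omega(1+\omega)=-1$ and $(1+\omega)^6=1$; since neither $1+\omega$, $\omega$, nor $-1$ equals $1$ in $\mathbb{Z}_3[\omega]$, the element $1+\omega$ has order exactly $6$.

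The principal subtlety is handling $p=2$: since Lemma~\ref{lemma-qr}.2 is stated only for $p\ge 5$ and the quadratic formula requires inversion of $2$, the ``$-3$ is a QR iff splits'' dichotomy cannot be applied blindly in characteristic $2$. Fortunately this case is disposed of in a single line. Beyond this bookkeeping point, the proof is a direct transcription of the argument for Gaussian integers in \cite{allan2008classification}, replacing $x^2+1$ by $x^2+x+1$ and the Legendre symbol of $-1$ by that of $-3$ throughout.
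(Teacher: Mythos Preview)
Your proof is correct and rests on the same underlying idea as the paper's: the structure of $\mathbb{Z}_p[\omega]^\times$ is governed by how the minimal polynomial of $\omega$ factors over $\mathbb{F}_p$, which in turn is controlled by the Legendre symbol of $-3$ via Lemma~\ref{lemma-qr}.2. The differences are presentational rather than mathematical. For part~2 you invoke the Chinese Remainder Theorem abstractly to get $\mathbb{F}_p[x]/(x^2+x+1)\cong\mathbb{F}_p\times\mathbb{F}_p$; the paper instead counts non-units by hand and then writes down the CRT map explicitly as $\psi(a+b\omega)=(a-2^{-1}b(q+1),\,a-2^{-1}b(-q+1))$, which is exactly evaluation at the two roots. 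For part~3 the arguments are identical in content. For part~1 your approach (count units via the norm identity $a^2-ab+b^2\equiv(a+b)^2\bmod 3$, then exhibit $1+\omega$ as a generator) is more explicit than necessary: the paper simply observes that an abelian group of order $6$ is automatically cyclic, which is shorter. Your careful separation of the $p=2$ case from the $p\ge 5$ case in part~3 is a nice touch that the paper handles implicitly.
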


\begin{proof}
To prove part 1, observe that the group $\mathbb{Z}_3[\omega]^\times$ is abelian,
and $|\mathbb{Z}_3[\omega]^\times| = 6$, 
so $\mathbb{Z}_3[\omega]^\times \cong \mathbb{Z}_6$.

To prove part 2, 
first note that then the equation $x^2 - x + 1 = 0 \mod p$ is equivalent to $(2x-1)^2=-3\mod p$. By Lemma~\ref{lemma-qr}, the equation 
$y^2 = -3 \mod p$, where $y = 2x - 1$  has two solutions $y = \pm q$. 
We next find the cardinality of $\mathbb{Z}_p[\omega]^\times$. 
Assume that $a + b\omega\notin\mathbb{Z}_p[\omega]^\times$, so that $a^2 + b^2 - ab = 0 \mod p$. 
If $a \neq 0 \mod p$, then $(a^{-1}b)^2 - (a^{-1}b) = -1 \mod p$ and so $2a^{-1}b - 1 = \pm q\mod p$.
So, $b =2^{-1} a(\pm q + 1)$.  In particular, 
for a fixed non-zero $a$, there are two possible values for $b$  
such that $a + b\omega\notin \mathbb{Z}_p[\omega]^\times$,
proving that $|\mathbb{Z}_p[\omega]^\times| = (p - 1)^2$.

As $\mathbb{Z}_p^\times \cong \mathbb{Z}_{p - 1}$,
it suffices to show that there is an isomorphism
$$\psi: \mathbb{Z}_p[\omega]^\times \to \mathbb{Z}_p^\times \times \mathbb{Z}_p^\times.$$
It is routine to check that $\psi$, defined by
$\psi(a + b\omega) = 
(a-2^{-1}b(q+1), a - 2^{-1}b (-q + 1))$, is an injective homomorphism, hence it is an isomorphism by equality 
of cardinalities.
 

To prove part 3, note that $\mathbb{Z}_p[\omega]$ has $p^2$ elements, so it suffices to show that $\mathbb{Z}_p[\omega]$ is a field, as the multiplicative group of any field is cyclic.
Assume again that $a + b\omega\notin\mathbb{Z}_p[\omega]^\times$, so that $a^2 + b^2 - ab = 0 \mod p$.
If $a \neq 0\mod p$, then $ (a^{-1}b)^2 - (a^{-1}b) = -1 \mod p$. By Lemma~\ref{lemma-qr}, the equation $x^2 - x + 1 = 0 \mod p$, or equivalently $(2x - 1)^2 = -3 \mod p$, has no solution, as $p = 5 \mod 6$. We conclude that $a = 0\mod p$,
and similarly $b = 0 \mod p$, 
so $\mathbb{Z}_p[\omega]$ is a field.
\end{proof}

\begin{lem} \label{lemma: eisenstein 1}
For a prime $p\ge 3$ and $m \ge 2$,
$$\mathbb{Z}_{p^m}[\omega]^\times \cong \mathbb{Z}_{p^{m - 1}} \times \mathbb{Z}_{p^{m - 1}} \times \mathbb{Z}_{p}[\omega]^\times.$$
\end{lem}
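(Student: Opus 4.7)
}

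The plan is to mirror the classical argument for units of $\mathbb{Z}_{p^m}[i]$ from \cite{allan2008classification}. Consider the natural ring surjection $\pi:\mathbb{Z}_{p^m}[\omega]\to\mathbb{Z}_p[\omega]$ obtained by reducing the $\mathbb{Z}$-coefficients modulo $p$. Restricted to units, $\pi$ is still surjective because a unit lifts a unit (invertibility is detected by the norm $a^2+b^2-ab$ being coprime to $p$, a condition that depends only on residues mod $p$). Let $K=\ker \pi|_{\mathbb{Z}_{p^m}[\omega]^\times}$, so that $K=\{1+p\beta:\beta\in\mathbb{Z}_{p^{m-1}}[\omega]\}$ (every such element is automatically a unit). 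Counting residues shows $|K|=p^{2(m-1)}$, and from the exact sequence $1\to K\to\mathbb{Z}_{p^m}[\omega]^\times\to\mathbb{Z}_p[\omega]^\times\to 1$ it therefore suffices to (i) identify the structure of $K$ and (ii) split the sequence.

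For step (i) I would show $K\cong \mathbb{Z}_{p^{m-1}}\times\mathbb{Z}_{p^{m-1}}$ via the map
\[
\phi:\mathbb{Z}_{p^{m-1}}\times\mathbb{Z}_{p^{m-1}}\to K,\qquad \phi(a,b)=(1+p)^a\,(1+p\omega)^b.
\]
Since $K$ is abelian, $\phi$ is a homomorphism, and both sides have cardinality $p^{2(m-1)}$, so only injectivity is needed. The key lemma (valid for $p\ge 3$) is the familiar binomial identity
\[
(1+p\alpha)^{p^k}\equiv 1+p^{k+1}\alpha\pmod{p^{k+2}},\qquad \alpha\in\mathbb{Z}_{p^m}[\omega],\ k\ge 0,
\]
which one proves by induction on $k$ using that the binomial coefficients $\binom{p}{j}$ are divisible by $p$ for $1\le j\le p-1$ (and, crucially, that $p\ge 3$ kills the quadratic term). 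Applying this with $\alpha=1$ and $\alpha=\omega$, I would run an induction: if $\phi(a,b)=1$ and $p^k\mid a,b$, write $a=p^ka'$, $b=p^kb'$ and observe that modulo $p^{k+2}$ the product becomes $1+p^{k+1}(a'+b'\omega)$; since $\{1,\omega\}$ is $\mathbb{Z}_p$-linearly independent, $p\mid a'$ and $p\mid b'$, so $p^{k+1}\mid a,b$. Continuing until $k=m-1$ gives injectivity.

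For step (ii) I would build a complement $H\le\mathbb{Z}_{p^m}[\omega]^\times$ with $H\cong\mathbb{Z}_p[\omega]^\times$. When $p\ge 5$, Lemma~\ref{lemma-app-p} gives $|\mathbb{Z}_p[\omega]^\times|\in\{(p-1)^2,\,p^2-1\}$, both coprime to $p$; letting $e$ denote its exponent, I would apply Hensel's lemma to $f(x)=x^e-1$ (whose derivative $ex^{e-1}$ is a unit at every root mod $p$) to lift each unit of $\mathbb{Z}_p[\omega]$ uniquely to a root of $x^e=1$ in $\mathbb{Z}_{p^m}[\omega]$. The set $H$ of these lifts is a subgroup that projects bijectively onto $\mathbb{Z}_p[\omega]^\times$, and $H\cap K=\{1\}$ because elements of $K$ have $p$-power order while elements of $H$ have order dividing $e$. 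For $p=3$, Hensel fails since $\gcd(e,p)=3$; instead I would exhibit the complement explicitly, taking $H=\langle -\omega\rangle$. This is a cyclic group of order $6$ in $\mathbb{Z}_{3^m}[\omega]^\times$ (one checks $(-\omega)^k-1$ has Eisenstein norm coprime to $3$ for $k=1,\dots,5$), and $H\cap K=\{1\}$ by a direct reduction mod $3$. In either case a cardinality count yields $HK=\mathbb{Z}_{p^m}[\omega]^\times$, so the internal direct product decomposition $\mathbb{Z}_{p^m}[\omega]^\times=H\times K$ completes the proof.

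The main obstacle will be step (i), namely the binomial bookkeeping establishing the injectivity of $\phi$; all the other ingredients are either standard Hensel/Sylow arguments or an explicit check with a sixth root of unity.
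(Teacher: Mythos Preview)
Your outline is correct and is precisely the Eisenstein analogue of the Gaussian-integer argument the paper cites from \cite{allan2008classification}, so you are following the paper's intended route. One small fix in the $p=3$ case: the Eisenstein norm of $(-\omega)^k-1$ is \emph{not} coprime to $3$ for $k=2,4$ (e.g.\ $(-\omega)^2-1=\omega^2-1=-2-\omega$ has norm $3$), so the norm check fails as stated; however, $(-\omega)^k-1$ is visibly nonzero in $\mathbb{Z}_{3^m}[\omega]$ for $k=1,\dots,5$ by inspection of its coefficients, so $-\omega$ still has order $6$ and the rest of your argument goes through unchanged.
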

\begin{proof}
The proof is analogous to that for Theorem 7 in \cite{allan2008classification}. 
\end{proof}

\begin{lem} \label{lemma: eisenstein 2} For $m\ge 1$, 
$\mathbb{Z}_{2^m}[\omega]^\times$ is classified as follows:
$\mathbb{Z}_2[\omega]^\times \cong \mathbb{Z}_3$, 
$\mathbb{Z}_{2^2}[\omega]^\times \cong \mathbb{Z}_3 \times \mathbb{Z}_2 \times \mathbb{Z}_2$, and, for
$m \ge 3$, $\mathbb{Z}_{2^m}[\omega]^\times \cong \mathbb{Z}_3 \times \mathbb{Z}_{2^{m - 1}} \times \mathbb{Z}_{2^{m - 2}} \times \mathbb{Z}_2$.
\end{lem}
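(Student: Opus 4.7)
My plan is to treat $m=1$, $m=2$, and $m\ge 3$ separately, exploiting that $2\equiv 2\pmod 3$ means $x^2+x+1$ is irreducible over $\mathbb{F}_2$, so $2$ is inert in $\Z[\omega]$. Thus $\Z_{2^m}[\omega]$ is a local ring with maximal ideal $(2)$ and residue field $\mathbb{F}_4$, and a direct count gives $|\Z_{2^m}[\omega]^\times| = 4^m - 4^{m-1} = 3\cdot 2^{2m-2}$. The reduction map $\Z_{2^m}[\omega]^\times\twoheadrightarrow \Z_2[\omega]^\times\cong \mathbb{Z}_3$ has kernel $U_1 := 1+2\Z_{2^m}[\omega]$ of order $2^{2m-2}$; since $\omega$ has order $3$ and $\gcd(3,|U_1|)=1$, the sequence splits as $\Z_{2^m}[\omega]^\times\cong \langle\omega\rangle\times U_1$. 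The cases $m=1$ (with $U_1$ trivial) and $m=2$ (a direct check that $U_1=\langle -1\rangle\times\langle 1+2\omega\rangle$, each generator of order $2$) follow immediately.

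For $m\ge 3$ I will propose $g_1:=1+2\omega$, $g_2:=1+4\omega$, and $g_3:=-1$ as generators of $U_1$ with intended orders $2^{m-1}$, $2^{m-2}$, and $2$. For $g_1$, the key identity is $g_1^2=1+4\omega+4\omega^2=-3$, so $\ord(g_1)=2\cdot\ord_{\Z_{2^m}^\times}(-3)$; since $(\Z/2^m)^\times\cong \Z_2\times\Z_{2^{m-2}}$ with $-1$ and $3$ as independent generators for $m\ge 3$, the element $-3=(-1)\cdot 3$ has order $2^{m-2}$, forcing $\ord(g_1)=2^{m-1}$. For $g_2$, I will introduce the filtration $U_i:=1+2^i\Z_{2^m}[\omega]$ and invoke the routine fact that for $u=1+2^iy\in U_i\setminus U_{i+1}$ with $i\ge 2$, squaring satisfies $u^2 = 1+2^{i+1}y+2^{2i}y^2 \in U_{i+1}\setminus U_{i+2}$ (since $2i\ge i+2$). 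Applied inductively from $g_2\in U_2\setminus U_3$, this yields $g_2^{2^k}\in U_{k+2}\setminus U_{k+3}$ for $0\le k\le m-3$ and fixes $\ord(g_2)=2^{m-2}$; $g_3=-1$ trivially has order $2$.

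Since the three orders multiply to $|U_1|$, what remains (and what I expect to be the hardest step) is to prove independence: that any relation $g_1^ag_2^bg_3^c=1$ in $U_1$ forces $a\equiv 0\pmod{2^{m-1}}$, $b\equiv 0\pmod{2^{m-2}}$, $c\equiv 0\pmod 2$. I plan to exploit the ``additive logarithms'' $L_i:U_i/U_{i+1}\to \mathbb{F}_4$ defined by $1+2^iy\mapsto y\bmod 2$, which are group isomorphisms onto $(\mathbb{F}_4,+)$. Reducing the relation modulo $U_2$ and using $L_1(g_1)=\omega$, $L_1(g_2)=0$, $L_1(g_3)=1$, one obtains the $\mathbb{F}_4$-equation $a\omega+c=0$, so $a\equiv 0\pmod 2$ and $c=0$. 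Writing $a=2a'$, the surviving $(g_1^2)^{a'}g_2^b=1$ reduces modulo $U_3$, now using $L_2(g_1^2)=1$ and $L_2(g_2)=\omega$, to give $a'\equiv 0$ and $b\equiv 0\pmod 2$. Iterating down the filtration $U_1\supset U_2\supset\cdots\supset U_m=\{1\}$ peels one factor of $2$ off each of $a$ and $b$ per step, forcing them to $0$ after $m-1$ levels. The bookkeeping hurdle is to verify at every stage that $L_k(g_1^{2^{k-1}})=1$ and $L_k(g_2^{2^{k-2}})=\omega$ remain $\mathbb{F}_2$-independent in $\mathbb{F}_4$, which follows from the same squaring-filtration lemma invoked for $g_2$'s order.
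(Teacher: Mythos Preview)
Your proof is correct and takes a genuinely different route from the paper's.

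The paper argues invariantly: after counting $|H|=3\cdot 2^{2m-2}$ and exhibiting a single element $1+3\omega$ of maximal order $3\cdot 2^{m-1}$, it knows $H\cong\Z_3\times\Z_{2^{m-1}}\times\prod_j\Z_{2^{e_j}}$ with $\sum e_j=m-1$, and then pins down the remaining invariants by directly solving $x^2=1$ (finding exactly $8$ solutions, forcing two extra cyclic factors) and $x^4=1$ (finding at most $32$ solutions, forcing one of those factors to be $\Z_2$). Your argument instead splits off $\langle\omega\rangle$ via the residue map, proposes the explicit generating set $1+2\omega,\ 1+4\omega,\ -1$ for $U_1$, computes their orders via the identity $(1+2\omega)^2=-3$ together with the $2$-adic filtration $U_i=1+2^i\Z_{2^m}[\omega]$, and establishes independence by reading off the images in $U_k/U_{k+1}\cong\mathbb{F}_4$ level by level. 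The paper's approach is more elementary in that it avoids any filtration machinery, at the cost of two somewhat ad hoc equation-solving computations; your approach is more structural, yields an explicit basis, and the filtration argument would transport essentially unchanged to the unit group of any inert prime in a quadratic order.
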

\begin{proof}
The multiplicative group $\mathbb{Z}_2[\omega]^\times$ is abelian with 3 elements, so $\mathbb{Z}_2[\omega]^\times \cong \mathbb{Z}_3$.
Assume that $m \ge 2$.
Write $H = \mathbb{Z}_{2^m}[\omega]^\times$.
The elements of the group $H$ are of the form $(1 + 2k_1) + 2k_2 \omega$, $2k_1 + (1 + 2k_2)\omega$ and $(1 + 2k_1) + (1 + 2k_2) \omega$  for $0 \le k_1, k_2 \le 2^{m - 1} - 1$, so the number of them is $2^{m - 1}2^{m - 1}3 = 3\times 2^{2m - 2}$. 
Furthermore (see proof of Theorem 7 in \cite{allan2008classification}), each element in $H$ has order at most $3 \cdot 2^{m - 1}$, and 
by verifying that $(1 + 3 \omega)^{3\cdot 2^{m-2}} \neq 1$ in 
$\mathbb{Z}_{2^m}[\omega]$ and $(1 + 3 \omega)^{2^{m-1}} \neq 1$ 
in $\mathbb{Z}_{2^m}[\omega]$, we see that there exists an element 
with order exactly $3 \cdot2^{m - 1}$.
As a consequence, $H \cong \mathbb{Z}_3 \times \mathbb{Z}_{2^{m - 1}} \times \prod_{j = 1}^r \mathbb{Z}_{2^{e_j}}$, where $e_j \ge 1$ and $\sum_{j = 1}^r e_j = m - 1$. When $m = 2$, the result follows immediately, so we assume $m\ge 3$ from now on.

We claim that $r = 2$. Since each factor, except $\mathbb{Z}_3$, is cyclic of order at least two, each contains exactly one subgroup of order two. So, $H$ has $2^{r + 1}$ solutions to the equation $(a + b\omega)^2 = 1\mod 2^m$, which is equivalent to
$$
\begin{cases}
a^2 - b^2 = 1 \mod 2^m\\
2ab - b^2 = 0 \mod 2^m
\end{cases}.
$$
This system has no solution unless $a$ is odd and $b$ is even, 
so we write $a = 2k_1 + 1$ and $b = 2k_2$ and obtain
$$
\begin{cases}
k_1^2 + k_1 - k_2^2 = 0 \mod 2^{m - 2}\\
(2k_1 + 1 - k_2) k_2 = 0 \mod 2^{m - 2}
\end{cases}.
$$
From the first equation, $k_2$ is even, so $2k_1 + 1 - k_2$ has an inverse and then $k_2 = 0 \mod 2^{m - 2}$, so 
$k_2 = 0$ or $2^{m - 2}$. 
Now $k_1(k_1 + 1) = 0 \mod 2^{m - 2}$. 
If $k_1$ is odd, then $k_1 + 1 = 0 \mod 2^{m - 2}$ implies $a = 2^{m - 1} - 1$ or $a = 2^{m} - 1$; if $k_1$ is even, then $k_1 = 0 \mod 2^{m - 2}$ implies $a = 0$ or $a = 2^{m - 1} + 1$.
So, the original system has eight solutions, $2^{r + 1} = 8$ and $r = 2$.

We now have $H \cong \mathbb{Z}_3 \times \mathbb{Z}_{2^{m - 1}} \times \mathbb{Z}_{2^{e_1}} \times \mathbb{Z}_{2^{e_2}}$, where $e_1 + e_2 = m - 1$ and $e_1 \ge e_2$. 
Now, the result follows for $m = 3$ and $4$, so we
assume $m\ge 5$. Then, we claim that 
$e_2 = 1$ and $e_1 = m - 2$.
Assume, to the contrary, that $e_2 \ge 2$.
Then each factor, except $\mathbb{Z}_3$, has exactly one subgroup of order four, giving $4^3 = 64$ elements of order at most four in the direct product. However, 
we will show that $H$ has at most 32 solutions to the equation $x^4 = 1$, which will establish our 
claim and end the proof. To this end, 
suppose $(a + b\omega)^4 = 1$ for some $a + b\omega \in \mathbb{Z}_{2^m}[\omega]$.  
Then 
$$
\begin{cases}
a^4 - 6a^2b^2 + 4ab^3 = 1 \mod 2^m\\
b(4a^3 - 6a^2b^2 + b^3) = 0 \mod 2^m
\end{cases}.
$$
This system has no solutions unless $b$ is even and $a$ is odd, 
so write $a = 2k_1 + 1$ and $b = 2k_2$,  $0 \le k_1, k_2 \le 2^{m - 1} - 1$.
Then the system becomes
$$
\begin{cases}
k_1(k_1 + 1)(2k_1^2 + 2k_1 + 1) - 3(2k_1 + 1)^2k_2^2 + 4(2k_2 + 1)k_2 = 0 \mod 2^{m - 3}\\
k_2\left[(2k_1 + 1)^3 - 6(2k_1 + 1)^2k_2^2 + 2k_2^3 \right] = 0 \mod 2^{m - 3}
\end{cases}.
$$
The factor in square brackets and $2k_1^2 + 2k_1 + 1$ are odd,   reducing the system to 
$$
\begin{cases}
k_1(k_1 + 1) = 0 \mod 2^{m - 3}\\
k_2 = 0 \mod 2^{m - 3}
\end{cases},
$$
which has at most 32 solutions.
\end{proof}

We conclude by summarizing Lemmas~\ref{lemma-app-p}--\ref{lemma: eisenstein 2}.

\begin{thm} \label{theorem: eisenstein}
We have
 $$
\mathbb{Z}_p[\omega]^\times \cong
\begin{cases}
\mathbb{Z}_6, & \text{ if } p = 3\\
\mathbb{Z}_{p - 1} \times \mathbb{Z}_{p - 1}, & \text{ if } p = 1 \mod 3\\
\mathbb{Z}_{p^2 - 1} & \text{ if } p = 2 \mod 3
\end{cases}
$$
and
$$
\mathbb{Z}_{p^m}[\omega]^\times \cong
\begin{cases}
\mathbb{Z}_{p^{m - 1}} \times \mathbb{Z}_{p^{m - 2}} \times \mathbb{Z}_6, & \text{ if } p = 2 \text{ and } m \ge 2\\
\mathbb{Z}_{p^{m - 1}} \times \mathbb{Z}_{p^{m - 1}} \times \mathbb{Z}_p[\omega]^\times, & \text{ if } p\neq 2
\end{cases}.
$$
\end{thm}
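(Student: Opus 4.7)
The plan is to assemble the theorem directly from the three preceding lemmas: Lemma~\ref{lemma-app-p} handles $m=1$ for odd primes, Lemma~\ref{lemma: eisenstein 1} handles $m\ge 2$ for odd primes, and Lemma~\ref{lemma: eisenstein 2} handles $p=2$ for every $m\ge 1$. No new argument is needed; the task is purely one of matching cases and converting between equivalent presentations.

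For the first display ($m=1$), I would split according to $p$. The case $p=3$ is identical in Lemma~\ref{lemma-app-p}(1) and in the theorem. For odd $p\ge 5$, note that $p\equiv 1\pmod 3$ is equivalent to $p\equiv 1\pmod 6$ and $p\equiv 2\pmod 3$ to $p\equiv 5\pmod 6$, so Lemma~\ref{lemma-app-p}(2)--(3) cover exactly these two cases. Finally, for $p=2$, Lemma~\ref{lemma: eisenstein 2} gives $\Z_2[\omega]^\times\cong \Z_3$; since $2\equiv 2\pmod 3$ and $\Z_{2^2-1}=\Z_3$, this matches the third case of the theorem.

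For the second display ($m\ge 2$), the case $p\ne 2$ is exactly Lemma~\ref{lemma: eisenstein 1}, with no rewriting required. For $p=2$ and $m=2$ the theorem asserts $\Z_{2}\times \Z_{1}\times \Z_6\cong \Z_2\times \Z_6$, which by the Chinese Remainder Theorem (applied to $\Z_6\cong \Z_2\times \Z_3$) is $\Z_3\times \Z_2\times \Z_2$, matching Lemma~\ref{lemma: eisenstein 2}. For $p=2$ and $m\ge 3$, the theorem's $\Z_{2^{m-1}}\times \Z_{2^{m-2}}\times \Z_6$ is, again by CRT on $\Z_6$, isomorphic to $\Z_3\times \Z_{2^{m-1}}\times \Z_{2^{m-2}}\times \Z_2$, exactly the form given by Lemma~\ref{lemma: eisenstein 2}.

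Since every claim in the theorem has been reduced to an earlier lemma, there is no genuine obstacle; the only points that require attention are (i) interpreting $\Z_{2^{m-2}}$ as the trivial group when $m=2$, and (ii) the routine CRT rewriting needed to see that the decompositions in Lemma~\ref{lemma: eisenstein 2} agree with the uniform expression $\Z_{p^{m-1}}\times \Z_{p^{m-2}}\times \Z_6$ used in the theorem statement.
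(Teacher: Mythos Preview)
Your proposal is correct and matches the paper's approach exactly: the paper states that the theorem is simply a summary of Lemmas~\ref{lemma-app-p}--\ref{lemma: eisenstein 2}, and your case analysis carries out precisely that bookkeeping, including the CRT rewriting of $\Z_6$ needed to reconcile the $p=2$ cases.
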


\end{document}